\theoremstyle{definition}
\newtheorem{prop}{Proposition}
\newtheorem{lemma}{Lemma}
\newtheorem{cor}{Corollary}
\newtheorem{theorem}{Theorem}
\newcommand{\cal}{\mathcal}
\title[Normality of the twistor space of a $5$-manifold with a $SO(3)$-structure \hfill]{Normality of the twistor space of a $5$-manifold with an irreducible $SO(3)$-structure}
\author{Johann Davidov}
\address{Institute of Mathematics and Informatics \\
Bulgarian Academy of Sciences\\ Acad. G.Bonchev st. Bl.8 \\
1113 Sofia\\ Bulgaria\\ \newline \centerline{and} \newline "L.Karavelov" Civil Engineering Higher
School, 175 Suhodolska st. 1373 Sofia, Bulgaria }
\email{jtd@math.bas.bg}
\begin{document}

\maketitle


\begin{abstract}

A manifold with an irreducible $SO(3)$-structure is a $5$-manifold $M$ whose structure group can be
reduced to the group $SO(3)$, non-standardly imbedded in $SO(5)$. The study of such manifolds has
been initiated by M. Bobie\'nski and P. Nurowski who, in particular,  have shown that one can define four $CR$-structures on a
twistor-like $7$-dimensional space associated to $M$.  In the
present paper it is observed that these $CR$-structures are induced by almost
contact metric structures. The purpose of the paper is to study the problem of normality of these
structures. The main result gives necessary and sufficient condition for normality  in geometric terms of the base manifold $M$.
Examples illustrating this result are presented at the end of the paper.

\vspace{0,1cm} \noindent 2000 {\it Mathematics Subject Classification}. 53C28; 53D15, 53B15.

\vspace{0,1cm} \noindent {\it Key words: irreducible $SO(3)$-structures, twistor spaces, almost
contact metric structures.}

\end{abstract}


\thispagestyle{empty}

\section{Introduction}

It is well-known that the group $SO(3)$ has a unique irreducible representation in dimension five
which gives a non-standard embedding of $SO(3)$ in the group $SO(5)$. Recently M. Bobie\'nski
and P. Nurowski \cite{BN} have studied five-dimensional oriented Riemannian manifolds admitting an
irreducible $SO(3)$-structure, meaning that their tangent frame bundle has a $SO(3)$-subbundle,
$SO(3)$ being non-standardly embedded in $SO(5)$. In general, the Levi-Civita connection does not
preserve this subbundle, which provides an example of the so-called non-integrable geometric
structures. A framework for studying such structures has been outlined by T. Friedrich in
\cite{Fr}. Among the irreducible $SO(3)$-structures the so-called nearly integrable structures are of special interest.
An important feature of these structures is that they admit a (unique) characteristic connection,
i.e. a metric $SO(3)$-connection with totally skew-symmetric torsion \cite{BN}. Interesting geometric properties
of such structures on concrete homogeneous manifolds have been
given by  I. Agricola, J. Becker-Bender, T. Friedrich \cite{ABF}, and a class of nearly integrable
irreducible $SO(3)$-structures on five-dimensional Lie groups has been studied by S. Chiossi, A.
Fino \cite{CF}. We also note that the topological obstructions for existence of an irreducible
$SO(3)$-structure have been found in \cite{B} and \cite{ABF}.

  It has been observed in \cite{BN} that every manifold $M$ with an irreducible $SO(3)$-structure
admits a twistor-like space. This is a $2$-sphere bundle ${\Bbb T}$ over $M$ on which one can
define four almost $CR$-structures in a way that resembles the twistor construction. In fact, these
almost CR-structures are induced by almost contact metric structures and the main purpose of this
paper is to find the geometric conditions on the base manifold $M$ under which these almost contact
structures are normal. Recall that normality is an important property of an almost contact manifold
$N$, which means that the product manifold $N\times S^1$ is a complex manifold with the complex
structure induced by the almost contact one (cf., for example, \cite{Blair}).

  In Section 2 of the present paper we collect basic facts about manifolds with an irreducible
$SO(3)$-structure. The twistor space ${\Bbb T}$ of such a manifold and the four almost contact metric structures on it are defined
in Section 3. The main result of the paper is proved in Section 4. It is not hard to see that three of the almost contact metric structures
on ${\Bbb T}$ are not normal. In order to state the normality result for
the fourth one, we introduce a specific
tensor $Q$ on the base manifold by means of the curvature of its characteristic connection. We show
that this almost contact metric structure on the twistor space ${\Bbb T}$ is normal if and only if
$Q$ and certain components of the torsion and the curvature of the characteristic connection
vanish. Two examples illustrating this result are given at the end of the paper. One of them
provides a new example of a non-normal almost contact metric structure which induces an integrable almost $CR$-structure
(as is well-known, any normal structure induces an integrable structure \cite{I}). Some of the computations in Chapter 4 are used
in an Addendum for giving a new proof of the integrability result in \cite{BN} for the almost $CR$-structures on ${\Bbb T}$.
This proof reveals the role of the geometric conditions on the base manifold obtained their for
integrability/non-integrability of these structures.

\smallskip

\centerline{\small ACKNOWLEDGMENT}

\smallskip

I would like to thank D. Blair and Y. Matsushita for their interest to this paper and helpful discussions.

\section{Irreducible $SO(3)$-structures}

\par

\subsection{The irreducible representation of $SO(3)$ on ${\Bbb R}^5$}$\\$

It is well-known that the irreducible finite-dimensional representations of $SO(3)$ are
odd-dimensional and there is a unique irreducible representation of $SO(3)$ on the space ${\Bbb
R}^{2l+1}$, $l=0,1,...$.  Following \cite{BN}, we shall describe the unique irreducible
(orthogonal) representation of $SO(3)$ on ${\Bbb R}^5$ as well as an $SO(3)$-invariant symmetric
$3$-form on ${\Bbb R}^5$ which plays a crucial role for defining $5$-manifolds with
$SO(3)$-structure.

 \par

 Let $e_1,..., e_5$ be the standard basis of ${\Bbb R}^5$. Denote by $\mu$ the
isomorphism of ${\Bbb R}^5$ onto the space of symmetric traceless real $3\times
3$-matrices given by
$$
x=\sum_{i=1}^5 x_ie_i\to \mu(x)=\left[
\begin{array}{ccc}
\displaystyle{\frac{x_1}{\sqrt 3}}-x_4 & x_2 & x_3 \\
x_2 &\displaystyle{ \frac{x_1}{\sqrt 3}}+x_4& x_5 \\
x_3 & x_5 & -2\displaystyle{\frac{x_1}{\sqrt 3}}
\end{array} \right].
$$
Then one can define an irreducible faithful representation $\rho$ of $SO(3)$ on
${\Bbb R}^5$  setting
$$
\rho(h)\cdot x=\mu^{-1}(h\mu(x)h^{-1}),\quad h\in SO(3).
$$

In this way we obtain a non-standard
smooth embedding
$$
\imath: SO(3)\hookrightarrow SO(5)
$$
such that $\imath(SO(3))$ acts on ${\Bbb R}^5$ irreducibly. Further
on, we shall often consider $SO(3)$ as a subgroup of $SO(5)$ by
means of the embedding $\imath$.

\smallskip

  The characteristic polynomial $P_x(\lambda)=det (\mu(x)-\lambda I)$ of the matrix
$\mu(x)$ has the form
$$
P_x(\lambda)=-\lambda^3+g(x,x)\lambda+\frac{2\sqrt 3}{9}\Upsilon(x,x,x),
$$
where
$$
g(x,x)=x_1^2+...+x_5^2
$$
is the standard metric of ${\Bbb R}^5$ and
$\Upsilon(x,x,x)=\displaystyle{\frac{3\sqrt 3}{2}}det\,\mu(x)$ is given by
\begin{equation}\label{gamma}
\begin{array}{lll}
\Upsilon(x,x,x)&=&\displaystyle{\frac{1}{2}}x_1(6x_2^2+6x_4^2-2x_1^2-3x_3^2-3x_5^2)\\[6pt]
& &+\displaystyle{\frac{3\sqrt 3}{2}}x_4(x_5^2-x_3^2)+3\sqrt 3 x_2x_3x_5.
\end{array}
\end{equation}
Obviously $P_{\rho(h)\cdot x}=P_x$, hence the polynomial
$\Upsilon$ is $\rho$-invariant.
Denote the $SO(3)$-invariant symmetric $3$-form
on ${\Bbb R}^5$ corresponding to the $SO(3)$-invariant polynomial
$\Upsilon$   also by $\Upsilon$.

An oriented orthonormal basis $a_1,...,a_5$ of ${\Bbb R}^5$ will be
called {\it adapted} if $\Upsilon(x,x,x)$, $x\in {\Bbb R}^5$, is
given by the right-hand side of (\ref{gamma}) with $x_1,...,x_5$
being the coordinates of $x$ with respect to the basis
$a_1,...,a_5$. Obviously, the standard basis $e=(e_1,...,e_5)$ of
${\Bbb R}^5$ is adapted. In view of the $SO(3)$-invariance of
$\Upsilon$, it is clear that the action of $SO(3)$ on ${\Bbb R}^5$
preserves the set of the adapted bases. Moreover, this action is
transitive since, by \cite[Proposition 2.5]{BN}, the stabilizer of
$\Upsilon$ under the standard action of $O(5)$ on the symmetric
$3$-forms on ${\Bbb R}^5$ coincides with $SO(3)$.

\par

  Now consider each $W\in \otimes^2{\Bbb R}^5$ as
an endomorphism of ${\Bbb R}^5$ given by $g(W(x),y)=g(W,x\otimes y)$, $x,y\in
{\Bbb R}^5$, and set
$$
\widehat\Upsilon(W)(x)=4\sum_{j=1}^5 \Upsilon_{W(e_j)}\circ \Upsilon_{e_j}(x).
$$
Then $\widehat\Upsilon$ preserves the decomposition $\otimes^2{\Bbb R}^5=\Lambda^2{\Bbb R}^5\oplus
\odot^2{\Bbb R}^5$ and according to \cite[Proposition 3.3]{BN} we have the orthogonal decomposition
\begin{equation}\label{ort}
\otimes^2{\Bbb R}^5
=\Lambda^2_3\oplus\Lambda^2_7\oplus\odot^2_1\oplus\odot^2_5\oplus\odot^2_9,
\end{equation}
where
$$
\begin{array}{lll}
\Lambda^2_3=\{F\in\otimes^2{\Bbb R}^5: \widehat\Upsilon(F)=7F\},\quad
\Lambda^2_7=\{F\in\otimes^2{\Bbb R}^5: \widehat\Upsilon(F)=-8F\},\\[6pt]
\odot^2_1=\{S\in\otimes^2{\Bbb R}^5: \widehat\Upsilon(S)=14S\}
=\{S=\lambda g: \lambda\in{\Bbb R}\},\\[6pt]
\odot^2_5=\{S\in\otimes^2{\Bbb R}^5: \widehat\Upsilon(S)=-3S\},\quad
\odot^2_9=\{S\in\otimes^2{\Bbb R}^5: \widehat\Upsilon(S)=4S\}.\\
\end{array}
$$
The representations $\Lambda^2_j$ and $\odot^2_k$ of $SO(3)$ are
irreducible; the indices $j$ and $k$ indicate their dimensions.

\par

The space $\Lambda^2_3$ will be used in the next section to define a twistor
space.

If $a=\{a_1,...,a_5\}$ is an adapted basis of ${\Bbb R}^5$, set
\begin{equation}\label{kappa}
\begin{array}{lll}
\kappa_1=\kappa_1(a)=\sqrt 3 a_1\wedge a_5+a_2\wedge a_3+a_4\wedge a_5,\\[6pt]
\kappa_2=\kappa_2(a)=\sqrt 3 a_1\wedge a_3+a_2\wedge a_5+a_3\wedge a_4,\\[6pt]
\kappa_3=\kappa_3(a)=2a_2\wedge a_4+a_3\wedge a_5.
\end{array}
\end{equation}
Then $\{\kappa_1,\kappa_2,\kappa_3\}$ is an orthogonal basis of $\Lambda^2_3$ with
$|\kappa_1|^2=|\kappa_2|^2=|\kappa_3|^2=5$, the metric on $\Lambda^2{\Bbb R}^5$ being defined by
$g(x\wedge y, u\wedge v)=g(x,u)g(y,v)-g(x,v)g(y,u)$.

\subsection{Manifolds with an irreducible  $SO(3)$-structure}$\\$

Let $M$ be a $5$-dimensional manifold. Suppose that the structure group of the tangent bundle
$\pi:TM\to M$ can be reduced to the group $\imath(SO(3))$. Using an atlas of trivializations of
$TM$ whose transition functions take values in $\imath(SO(3))$, we can define a Riemannian metric
$g$ and an orientation on $M$, and can transfer the tensor $\Upsilon$ on ${\Bbb R}^5$ to a rank $3$
tensor on $M$ (since $\Upsilon$ is invariant under $\imath(SO(3))$). We denote this tensor on $M$
again by $\Upsilon$. Taking an adapted basis of ${\Bbb R}^5$ we can get a local oriented
orthonormal frame $E_1,...,E_5$ of $TM$ such that
$$
\begin{array}{c}
\Upsilon(X,X,X)=\displaystyle{\frac{1}{2}}x_1(6x_2^2+6x_4^2-2x_1^2-3x_3^2-3x_5^2)\\[6pt]
+\displaystyle{\frac{3\sqrt 3}{2}}x_4(x_5^2-x_3^2)+3\sqrt 3 x_2x_3x_5
\end{array}
$$
for $X=\sum_{i=1}^4x_iE_i$. A frame with this property will be called {\it adapted}. The set of adapted
bases of tangent spaces of $M$ constitute the total space of a principal $\imath(SO(3))$-bundle
${\cal A}{\cal B}(M)$ that is a reduction of the bundle of oriented orthonormal frames of $(M,g)$.

For every $v\in TM$, the linear map $\Upsilon_v:
T_{\pi(v)}M\to T_{\pi(v)}M$ defined by
$$
g(\Upsilon_v(x),y)=\Upsilon(v,x,y),\quad x,y\in T_{\pi(v)}M,
$$
has the following properties:
\begin{enumerate}
\item[$(i)$]~ $\Upsilon_v$ is totally symmetric, i.e.\\
$ g(u,\Upsilon_v(w))=g(w,\Upsilon_v(u))=g(u,\Upsilon_w(v)), \quad
u,v,w\in T_{\pi(v)}M; $
\item[$(ii)$]~$\Upsilon_v$ is trace-free, $Trace\,\Upsilon_v=0$;
\item[$(iii)$]$\Upsilon_v^2(v)=g(v,v)v$.
\end{enumerate}

One of the important observations in \cite{BN} is that any rank $3$ tensor  on an oriented
Riemannian $5$-manifold with the properties $(i)$ - $(iii)$  determines a reduction of the
structure group of $TM$ to the group $\imath(SO(3))$. Two reductions determined by tensors
$\Upsilon$ and $\widetilde\Upsilon$ are equivalent iff there is an isometry $\varphi$ of $M$ such
that $\varphi^{\ast}(\widetilde\Upsilon)=\Upsilon$.

  We shall say a $5$-manifold $M$ has an
{\it irreducible $SO(3)$-structure}  if the structure group of $TM$ can be reduced to the group
$\imath(SO(3))$. The topological obstructions for existence of such a structure are discussed in
\cite{ABF,B}. Examples of manifolds with irreducible $SO(3)$-structures can be found in
\cite{ABF,BN,B,CF}.

Let $(M,g,\Upsilon)$ be an oriented Riemannian $5$-manifold with an
irreducible $SO(3)$-structure determined by a rank $3$ tensor
$\Upsilon$ having the properties $(i) - (iii)$.

The orthogonal decomposition (\ref{ort}) of $\otimes^2{\Bbb R}^5$
gives rise to an orthogonal decomposition
\begin{equation}\label{ort'}
\otimes^2TM
=\Lambda^2_3TM\oplus\Lambda^2_7TM\oplus\odot^2_1TM\oplus\odot^2_5TM\oplus\odot^2_9TM,
\end{equation}
where $\Lambda^2_3TM$ is the associated bundle
${\cal A}{\cal B}(M)\times_{\imath(SO(3))}\Lambda^2_3$, etc.

Every $SO(3)$-connection $\nabla$ on $M$ is metric ($\nabla g=0$)
and preserves $\Upsilon$ ($\nabla\Upsilon=0$) as well as  the
decomposition (\ref{ort'}).

The curvature operator of $\nabla$ at any point $p\in M$ takes its values in $\Lambda^2_3T_pM$
($\cong so(3))$. Note that for the curvature tensor $R$ and the curvature operator ${\cal
R}:\Lambda^2TM\to \Lambda^2 TM$ we adopt the following definitions:
$$R(X,Y,Z)=-\nabla_X\nabla_YZ+\nabla_X\nabla_YZ+\nabla_{[X,Y]}Z,$$
$$g({\cal R}(X\wedge Y), Z\wedge U)=g(R(X,Y)Z,U),$$
for $X,Y,Z,U\in TM$.

\section{The twistor space of a manifold with an irreducible $SO(3)$-structure and almost contact metric structures on it}

As in \cite{BN}, define the twistor space of $M$ as the sphere bundle
$$
{\Bbb T}=\{\sigma\in\Lambda^2_3TM: |\sigma|^2=5\}.
$$ The radius of the fibre is chosen so that
$\kappa_1,\kappa_2,\kappa_3$ defined via (\ref{kappa}) by means of an adapted frame be sections of
${\Bbb T}$. We can define four $CR$-structures on the manifold ${\Bbb T}$ in the following way (\cite{BN}).

Henceforward the restrictions to $\Lambda^2_3TM$ and ${\Bbb
T}$ of the bundle projection $\pi:\Lambda^2TM\to M$ will be denoted again by $\pi$. The tangent
bundle of ${\Bbb T}$ will be considered as a subbundle of the tangent bundle of the manifold $\Lambda^2_3TM$. For
every $\sigma\in{\Bbb T}$, the vertical space ${\cal V}_{\sigma}$ at $\sigma$ of the bundle
$\pi:{\Bbb T}\to M$ (i.e. the tangent space at $\sigma$ of the fibre through $\sigma$) will be
denoted by ${\cal V}_{\sigma}$; clearly ${\cal V}_{\sigma}$ is the orthogonal complement of
$\sigma$ in $\Lambda^2_3T_{\pi(\sigma)}M$.

Let $\nabla$ be a $SO(3)$-connection on $M$. Then, as we have mentioned, $\nabla$ preserves
$\Lambda^2_3TM$, so it induces a connection on this vector bundle which we shall denote also by
$\nabla$. For every $\sigma\in{\Bbb T}$, the horizontal subspace ${\cal H}_{\sigma}$ of
$T_{\sigma}(\Lambda^2_3TM)$ with respect to $\nabla$ is tangent to ${\Bbb T}$ since $\nabla$ is
metric. Thus, the tangent bundle of the twistor space is the direct sum of its vertical bundle ${\cal V}$
and the horizontal bundle ${\cal H}$ of $\nabla$: $T{\Bbb T}={\cal V}\oplus{\cal H}$.

For $\sigma\in {\Bbb T}$, set
$$
\xi_{\sigma}=\frac{1}{4}\ast(\sigma\wedge\sigma),
$$
where $\ast:\Lambda^4TM\to TM$ is the Hodge star operator. Denote by $H^{\sigma}$ the orthogonal
complement of $\xi_{\sigma}$ in $T_{\pi(\sigma)}M$ and give $H^{\sigma}$ the orientation that
yields the orientation of $T_{\pi(\sigma)}M$ via the decomposition $T_{\pi(\sigma)}M={\Bbb
R}\xi_{\sigma}\oplus H^{\sigma}$.  Since $(H^{\sigma},g)$ is an oriented
Euclidean four-dimensional vector space, $\Lambda^2H^{\sigma}$ decomposes into self-dual and
anti-self-dual parts, $\Lambda^2H^{\sigma}=\Lambda^2_{+}H^{\sigma}\oplus\Lambda^2_{-}H^{\sigma}$.
Denote by $\sigma_{\pm}$ the components of $\sigma$ with respect to this decomposition.
The $2$-vectors $\sigma_{\pm}$  determine two  complex structures $J^{\sigma}_{\pm}$ on the vector
space $H^{\sigma}$ compatible with its metric and $\pm$-orientation. These are given by
\begin{equation}\label{J}
g(J^{\sigma}_{\pm}X,Y)=\frac{2}{2\pm 1}g(\sigma_{\pm}, X\wedge Y),~ X,Y\in H^{\sigma}.
\end{equation}

For $\sigma\in{\Bbb T}$, denote by $(H^{\sigma})^h_{\sigma}$ the horizontal lift at $\sigma$  of
the vector space $H^{\sigma}$ with respect to $\nabla$. Let ${\cal D}\to {\Bbb T}$ be the subbundle
of $T{\Bbb T}$ whose fibre at $\sigma\in{\Bbb T}$ is $${\cal D}_{\sigma}={\cal V}_{\sigma}\oplus
(H^{\sigma})^h_{\sigma}.$$
The fibre of the twistor bundle $\pi:{\Bbb T}\to M$ is the unit sphere and we denote its standard complex structure
by ${\cal J}$.  Then, as is usual in the twistor theory,  we define  almost complex structure $\cal
J^{(n)}_{\pm}$, $n=1,2$, on the bundle ${\cal D}$ setting
$$
\cal J^{(n)}_{\pm}V=(-1)^{n+1}{\cal J}~\mbox{ for }~ V\in{\cal V}_{\sigma},\quad
\cal J^{(n)}_{\pm}X^h_{\sigma}=(J^{\sigma}_{\pm}X)^h_{\sigma}~\mbox{ for }~ X\in H^{\sigma}.
$$
In this way we obtain four almost $CR$-structures $({\cal D},\cal
J^{(n)}_{\pm})$ on the manifold ${\Bbb T}$.

We extend $J^{\sigma}_{\pm}$ to partially complex structures
$\varphi^{\sigma}_{\pm}$ on the space $T_{\pi(\sigma)}M$,  setting
$\varphi^{\sigma}_{\pm}(\xi_{\sigma})=0$ and $\varphi^{\sigma}_{\pm}=J^{\sigma}_{\pm}$ on
$H^{\sigma}$. Then $\varphi^{\sigma}_{\pm}$ is $g$-skew-symmetric,
$(\varphi^{\sigma}_{\pm})^2X=-X+g(X,\xi_{\sigma})\xi_{\sigma}$ for every $X\in T_{\pi(\sigma)}M$,
and $Im\,\varphi^{\sigma}_{\pm}=H^{\sigma}$, $Ker\,\varphi^{\sigma}_{\pm}={\Bbb R}\xi_{\sigma}$.
Now we define partially complex structures
$\Phi_{\pm}^{(n)}$, $n=1,2$, of rank $6$ on the manifold ${\Bbb T}$ setting $\Phi_{\pm}^{(n)}=\cal
J^{(n)}_{\pm}$ on ${\cal V}$  and
$\Phi_{\pm}^{(n)}X^h_{\sigma}=(\varphi^{\sigma}_{\pm}X)^h_{\sigma}$ for $X\in T_{\pi(\sigma)}M$.

\smallskip

\noindent {\it Remark}. By "partially complex structure" of rank $2k$ on a manifold $N$, $0<2k\leq
dim\,N$, we mean an endomorphism ${\cal F}$ of $TN$ of rank $2k$ such that ${\cal F}^3+{\cal F}=0$.
Clearly, given such a structure, we have $TN=Im {\cal F}\oplus Ker {\cal F}$ and ${\cal F}$ is an
almost complex structure on the vector bundle $Im {\cal F}$, so the name. Partially complex
structures are also called $f$-structures.

\smallskip

For any fixed $t>0$, define a metric $h_t$ on ${\Bbb T}$ by
$$
h(X^h_{\sigma}+V, Y^h_{\sigma}+W)=g(X,Y)+tg(V,W),
$$
where $V,W\in{\cal V}_{\sigma}$, $X,Y\in T_{\pi(\sigma)}M$ (this is the so-called canonical
variation of the Riemannian metric $g$ \cite{Besse}).
Set $\chi_{\sigma}=(\xi_{\sigma})^h_{\sigma}$. Then $(\Phi_{\pm}^{(n)},\chi,h_t)$ are almost contact metric structures inducing the $CR$-structures $({\cal D},\cal
J^{(n)}_{\pm})$. We
refer to \cite{Blair} for general facts about (almost)  contact metric structures.

\section{Normality of the almost contact metric structures on the twistor space}

Recall that any almost contact structure $(\varphi,\xi,\eta)$ on a
manifold $N$ induces an almost complex structure on the manifold
$N\times S^1$ in a natural way (\cite{Blair}). An almost contact
metric structure on $N$ is said to be normal if the induced almost
complex structure on $N\times S^1$ is integrable. It is well-known
that $(\varphi,\xi,\eta)$ is a normal structure if and only if the
tensor $N^{(1)}(X,Y)=\varphi ^{2}[X,Y]+ [\varphi X,\varphi
Y]-\varphi [\varphi X,Y] -\varphi [X,\varphi Y] +d\eta(X,Y)\xi$
vanishes (see, for example, \cite{Blair}; the definition of $d\eta$
used here is twice the one in \cite{Blair}). This tensor for the
almost contact structure $(\Phi^{(n)}_{\pm},\chi,h_t)$ will be
denote by $N^{(n)}_{\pm}$.

\subsection {Certain technicalities}$\\$

For any adapted basis  $a=\{a_1,...,a_5\}$  of ${\Bbb R}^5$, define an  orthogonal basis of $\Lambda^2_3$
$\{\kappa_1(a),\kappa_2(a),\kappa_3(a)\}$ by means of (\ref{kappa}).

\smallskip

The proof of the next statement contains  formulas that will
also be used latter on.

\begin{lemma}\label{orient}
All bases $\{\kappa_1(a),\kappa_2(a),\kappa_3(a)\}$ determine the same orientation on the vector space $\Lambda^2_3$.
\end{lemma}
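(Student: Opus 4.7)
The plan is to reduce the lemma to a short connectedness argument based on the $SO(3)$-action on adapted bases. As noted just before the lemma, $\imath(SO(3))$ acts transitively on the set of adapted bases of ${\Bbb R}^5$; this follows from \cite[Proposition 2.5]{BN}, via the fact that $SO(3)$ is exactly the stabilizer of $\Upsilon$ under the standard $O(5)$-action on symmetric $3$-forms. Hence, for any two adapted bases $a$ and $\widetilde a$, there exists $h\in SO(3)$ with $\widetilde a_i=\rho(h)\cdot a_i$ for each $i$. Since $SO(3)$ is connected, one can choose a continuous path $t\mapsto h_t$ in $SO(3)$ joining the identity to $h$; this produces a continuous one-parameter family $a(t)=\rho(h_t)\cdot a$ of adapted bases interpolating between $a$ and $\widetilde a$.

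Second, I would observe that the formulas (\ref{kappa}) express $\kappa_1(a),\kappa_2(a),\kappa_3(a)$ as fixed linear combinations of wedge products of the $a_i$, so the assignment $a\mapsto(\kappa_1(a),\kappa_2(a),\kappa_3(a))$ is continuous. Composed with the path $a(t)$, this gives a continuous path of ordered orthogonal bases of $\Lambda^2_3$, each consisting of three vectors of length $\sqrt{5}$. The orientation determined by an ordered basis of a finite-dimensional real vector space is the sign of its determinant relative to any fixed reference basis; as a function of the ordered basis, this sign is locally constant on the open set of ordered bases. Therefore, the orientation is constant along the path, and one concludes that $\{\kappa_i(a)\}$ and $\{\kappa_i(\widetilde a)\}$ orient $\Lambda^2_3$ in the same way.

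I do not anticipate a real obstacle: the heart of the proof is just the connectedness of $SO(3)$ combined with the transitivity already established in the preceding discussion. The only point that must be mentioned in passing is that the three vectors $\kappa_i(a)$ indeed form an orthogonal basis of $\Lambda^2_3$ for every adapted $a$ — but this is built into the $SO(3)$-equivariance of the construction of $\Lambda^2_3$ and is already asserted for the standard basis right after (\ref{kappa}). No explicit calculation of change-of-basis determinants is needed, which is why the continuity argument is preferable to a direct computation in the coordinates of $h\in SO(3)$.
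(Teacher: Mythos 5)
Your proof is correct, and it takes a genuinely different route from the paper's. You reduce the statement to the connectedness of $SO(3)$: the transitivity of the $SO(3)$-action on adapted bases (established just before the lemma) plus a path in $SO(3)$ from the identity to the relevant group element gives a continuous one-parameter family of adapted bases joining $a$ to $\widetilde a$; since $a\mapsto(\kappa_1(a),\kappa_2(a),\kappa_3(a))$ is continuous by (\ref{kappa}) and takes values in ordered bases of $\Lambda^2_3$ (orthogonal triples of norm $\sqrt{5}$, as asserted right after (\ref{kappa}) for every adapted $a$), the locally constant orientation sign cannot jump along the path. The paper instead factors $h=h_{\psi}h_{\theta}h_{\varphi}$ into Euler-angle generators and computes explicitly how each factor transforms the triple $(\kappa_1,\kappa_2,\kappa_3)$, reading off that each of the three resulting $3\times 3$ change-of-basis matrices preserves orientation. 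Your argument is shorter and more robust -- it is insensitive to the delicate sign bookkeeping that the explicit computation requires -- but the paper's computation is not carried out only for the sake of this lemma: the author announces beforehand that the transformation formulas (\ref{k(b)}), (\ref{k(c)}), (\ref{k(a')}) obtained in the proof are reused later, notably in the proof of Lemma~\ref{ort-basis} (to identify the $SO(3)$-action on $\Lambda^2_3$ with the adjoint action) and in the lemma concerning the tensor $Q$. So your approach proves the lemma cleanly but leaves those formulas to be derived separately when they are needed.
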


\begin{proof}
Let $a=\{a_1,...,a_5\}$  and $a'=\{a_1',...,a_5'\}$ be two adapted
bases of ${\Bbb R}^5$. As we have remarked, the group $SO(3)$ acts
transitively on the set of adapted bases, so there is a matrix $h$
in $SO(3)$ such that $\imath (h)$ sends the basis $a$ to the basis
$a'$. Any such a matrix $h$ can be represented as the product
$h_{\psi}.h_{\theta}.h_{\varphi}$ of the $SO(3)$-matrices
$$
h_{\psi}=\left[\begin{matrix}
\cos\psi & \sin\psi & 0 \\
-\sin\psi & \cos\psi & 0 \\
 0   &  0 & 1
\end{matrix}\right],
h_{\theta}=\left[\begin{matrix}
0 & 0 & 1 \\
\cos\theta & \sin\theta & 0 \\
-\sin\theta & \cos\theta & 0
\end{matrix}\right],
h_{\varphi}=\left[\begin{matrix}
-\sin\varphi & \cos\varphi & 0 \\
0 & 0 & 1 \\
\cos\varphi & \sin\varphi & 0
 \end{matrix}\right].
$$
The matrix $\imath(h_{\varphi})$ transforms the bases
$a=\{a_1,...,a_5\}$ to the bases $b=\{b_1,...,b_5\}$ where
\begin{equation}\label{b}
\begin{array}{cll}
b_1=-\displaystyle{\frac{1}{2}}a_1-\displaystyle{\frac{\sqrt{3}}{2}}\,a_2\sin 2\varphi+\displaystyle{\frac{\sqrt{3}}{2}}\,a_4\cos 2\varphi,\\[8pt]
b_2=-a_3\sin\varphi+a_5\cos\varphi, \quad b_3=a_2\cos 2\varphi+a_4\sin 2\varphi,\\
b_4=-\displaystyle{\frac{\sqrt{3}}{2}}a_1+\displaystyle{\frac{1}{2}}a_2\sin 2\varphi
-\displaystyle{\frac{1}{2}}a_4\cos 2\varphi,\quad b_5=a_3\cos\varphi+a_5\sin\varphi.
\end{array}
\end{equation}
For this basis, we have
\begin{equation}\label{k(b)}
\begin{array}{c}
\kappa_1(b)=-\sin\varphi\,\kappa_1(a)-\cos\varphi\,\kappa_2(a),\quad
\kappa_2(b)=\kappa_3(a), \\[6pt]
\kappa_3(b)=\cos\varphi\,\kappa_1(a)-\sin\varphi\,\kappa_2(a).
\end{array}
\end{equation}
Hence $\{\kappa_1(a),\kappa_2(a),\kappa_3(a)\}$ and
$\{\kappa_1(b),\kappa_2(b),\kappa_3(b)\}$ yield the same
orientation.

The matrix $\imath(h_{\theta})$ transforms $b=\{b_1,...,b_5\}$ to $c=\{c_1,...,c_5\}$ with
\begin{equation}\label{c}
\begin{array}{c}
c_1=-\displaystyle{\frac{1}{2}}b_1+\displaystyle{\frac{\sqrt{3}}{2}}\,b_2\sin
2\theta-\displaystyle{\frac{\sqrt{3}}{2}}\,b_4\cos 2\theta,\quad
c_2=b_3\cos\theta+b_5\sin\theta, \\[6pt]
c_3=-b_3\sin\theta+b_5\cos\theta,\>
c_4=\displaystyle{\frac{\sqrt{3}}{2}}b_1+\displaystyle{\frac{1}{2}}b_2\sin 2\theta-\displaystyle{\frac{1}{2}}b_4\cos 2\theta,\\[6pt]
c_5=b_2\cos 2\theta +b_4\sin 2\theta.
\end{array}
\end{equation}
It follows that
\begin{equation}\label{k(c)}
\begin{array}{c}
\kappa_1(c)=\kappa_3(b),\quad
\kappa_2(c)=-\cos\theta\,\kappa_1(b)+\sin\theta\,\kappa_2(a), \\[6pt]
\kappa_3(c)=-\sin\theta\,\kappa_1(b)-\cos\theta\,\kappa_2(b).
\end{array}
\end{equation}
Finally, the matrix $\imath(h_{\psi})$ sends any adapted basis $c=\{c_1,...,c_5\}$ to the adapted
bases $a'=\{a_1',...,a_5'\}$ for which
\begin{equation}\label{a'}
\begin{array}{c}
a_1'=c_1,\quad a_2'= c_2\cos 2\psi+c_4\sin 2\psi,\\[6pt]
a_3'=c_3\cos\psi+c_5\sin\psi\\[6pt]
a_4'=-c_2\sin 2\psi+c_4\cos 2\psi,\quad a_5'=-c_3\sin\psi+c_5\cos\psi.
\end{array}
\end{equation}
This implies that
\begin{equation}\label{k(a')}
\begin{array}{c}
\kappa_1(a')=\cos\psi\,\kappa_1(c)-\sin\psi\,\kappa_2(c),\quad
\kappa_2(a')=\sin\psi\,\kappa_1(c)+\cos\psi\,\kappa_2(a),\\[6pt]
\kappa_3(a')=\kappa_3(c).
\end{array}
\end{equation}
Now, the lemma follows from (\ref{k(b)}),(\ref{k(c)}),
(\ref{k(a')}).
\end{proof}

\medskip

For any $\kappa\in\Lambda^2{\Bbb R}^5$, denote by $S_{\kappa}$ the
skew-symmetric endomorphism of $\Lambda^2{\Bbb R}^5$ corresponding
to $\kappa$:\, $g(S_{\kappa}(x),y)=g(\kappa,x\wedge y)$.

\smallskip
Let $(\kappa_1,\kappa_2,\kappa_3)$ be the basis of $\Lambda^2_3$
defined via (\ref{kappa}) by means of the standard basis of ${\Bbb
R}^5$. Then
$$
[S_{\kappa_1},S_{\kappa_2}]=-S_{\kappa_3},\quad
[S_{\kappa_2},S_{\kappa_3}]=-S_{\kappa_1}, \quad
[S_{\kappa_3},S_{\kappa_1}]=-S_{\kappa_2}.
$$
Thus the map $\Lambda^2_3\ni\kappa\to -S_{\kappa}$ can be used to defined on $\Lambda^2_3$ the
structure of a $3$-dimensional Lie algebra isomorphic to $so(3)$. Considering $\Lambda^2_3$ with
this structure, the negative Killing form coincides with the metric of $\Lambda^2_3$ induced by the
metric of $\Lambda^2{\Bbb R}^5$.

\smallskip

Fix the orientation on $\Lambda^2_3$ defined by Lemma~\ref{orient}.
Then we have the following.

\begin{lemma}\label{ort-basis}
For every oriented orthogonal basis $q_1,q_2,q_3$ of $\Lambda^2_3$ with
$|q_1|^2=|q_2|^2=|q_3|^2=5$, there is an adapted basis $a=(a_1,...,a_5)$ of ${\Bbb R}^5$ such that
$$q_1=\kappa_1(a), \quad q_2=\kappa_2(a),\quad  q_3=\kappa_3(a).$$
\end{lemma}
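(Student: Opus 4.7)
The plan is to use the transitivity of $\imath(SO(3))$ on adapted bases of ${\Bbb R}^5$, together with the equivariance of the assignment $a\mapsto (\kappa_1(a),\kappa_2(a),\kappa_3(a))$, in order to reduce the statement to the surjectivity of the induced action of $SO(3)$ on $\Lambda^2_3$ onto $SO(\Lambda^2_3)$.

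First I would note, directly from the defining formulas (\ref{kappa}), that if $a'=\imath(h)\cdot a$ for some $h\in SO(3)$, then $\kappa_j(a')=\rho_3(h)\,\kappa_j(a)$ for $j=1,2,3$, where $\rho_3(h)$ denotes the restriction to $\Lambda^2_3$ of the canonical action of $\imath(h)$ on $\Lambda^2{\Bbb R}^5$. This restriction is well defined because $\Lambda^2_3$ is $SO(3)$-invariant by (\ref{ort}), and it produces a smooth homomorphism $\rho_3:SO(3)\to O(\Lambda^2_3)$ that is orthogonal for the induced metric. By connectedness of $SO(3)$ together with Lemma~\ref{orient}, it preserves the chosen orientation on $\Lambda^2_3$, so $\rho_3(SO(3))\subseteq SO(\Lambda^2_3)$.

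Next I would show that $\rho_3$ is an isomorphism. The formulas (\ref{k(b)}), (\ref{k(c)}), (\ref{k(a')}) exhibit $\rho_3(h_\varphi)$, $\rho_3(h_\theta)$, $\rho_3(h_\psi)$ as non-trivial elements of $SO(\Lambda^2_3)$, so $\rho_3$ is non-constant; because $SO(3)$ is simple, its kernel is then trivial. A faithful smooth homomorphism between connected $3$-dimensional Lie groups has open image, and an open subgroup of the connected group $SO(\Lambda^2_3)$ is the entire group, so $\rho_3$ is surjective. To finish, fix the standard adapted basis $e=(e_1,\dots,e_5)$ of ${\Bbb R}^5$. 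Given an oriented orthogonal basis $q_1,q_2,q_3$ of $\Lambda^2_3$ with $|q_i|^2=5$, both triples $(\kappa_1(e)/\sqrt 5,\kappa_2(e)/\sqrt 5,\kappa_3(e)/\sqrt 5)$ and $(q_1/\sqrt 5,q_2/\sqrt 5,q_3/\sqrt 5)$ are oriented orthonormal bases of the $3$-dimensional Euclidean space $\Lambda^2_3$, so there is a unique $T\in SO(\Lambda^2_3)$ with $T\kappa_j(e)=q_j$. Choose $h\in SO(3)$ with $\rho_3(h)=T$ and set $a=(\imath(h)e_1,\dots,\imath(h)e_5)$; since $\imath(SO(3))$ preserves adapted bases, $a$ is adapted, and the equivariance of the first step gives $\kappa_j(a)=\rho_3(h)\kappa_j(e)=q_j$ for $j=1,2,3$.

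The delicate point is the surjectivity of $\rho_3$; a constructive alternative that avoids representation-theoretic input is to combine (\ref{k(b)}), (\ref{k(c)}) and (\ref{k(a')}) via the Euler angle decomposition $h=h_\psi h_\theta h_\varphi$, solving for $\varphi$, $\theta$, $\psi$ from the components of $q_1,q_2,q_3$ in the basis $(\kappa_1(e),\kappa_2(e),\kappa_3(e))$ exactly as in the classical Euler parameterization of $SO(3)$.
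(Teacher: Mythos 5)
Your proposal is correct and takes essentially the same route as the paper: both reduce the lemma, via the equivariance of $a\mapsto(\kappa_1(a),\kappa_2(a),\kappa_3(a))$ and the transitivity of $SO(3)$ on adapted bases, to the surjectivity of the induced $SO(3)$-action on $\Lambda^2_3$ onto $SO(\Lambda^2_3)$. The only difference is in how that surjectivity is justified: the paper identifies the action with the adjoint representation on $so(3)$ (via $\kappa\mapsto -S_{\kappa}$) and invokes the classical fact that every special orthogonal transformation of $so(3)$ is of the form $Ad_h$, whereas you argue abstractly from the simplicity of $SO(3)$, equality of dimensions, and openness of the image.
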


\begin{proof}
Consider $\Lambda^2_3$ with the Lie algebra structure isomorphic to $so(3)$ we have defined above.
Then it follows from (\ref{k(b)}),(\ref{k(c)}), (\ref{k(a')}) that the standard action of
$\imath(SO(3))$ on $\Lambda^2_3\subset \Lambda^2{\Bbb R}^5$ coincides with the adjoint action of
$SO(3)$: $Ad_{h}(S_{\kappa})=\imath(h)^{-1}\circ S_{\kappa}\circ \imath(h)$ for $h\in SO(3)$,
$\kappa\in \Lambda^2_3$. Let $(\kappa_1,\kappa_2,\kappa_3)$ be the basis of $\Lambda^2_3$ defined
by means of the standard basis $(e_1,...,e_5)$ of ${\Bbb R}^5$. Then $(q_1,q_2,q_3)$ can be sent to
$(\kappa_1,\kappa_2,\kappa_3)$  by an orthogonal transformation of $\Lambda^2_3$. Any such a
transformation has the form $Ad_h$ for some $h\in SO(3)$, so there is  $h\in SO(3)$ for which
$S_{q_j}=\imath(h)^{-1}\circ S_{\kappa_j}\circ \imath(h)$ for $j=1,2,3$. Then the basis
$a=\{a_l=\imath(h)^{-1}(e_l): l=1,...,5\}$ is adapted and computing the values of $S_{q_j}$ at
$a_1,...,a_5$ we see that $S_{q_j}=S_{\kappa_j(a)}$, $j=1,2,3$. Therefore $q_j=\kappa_j(a)$.
\end{proof}

Now  take a point $\sigma\in{\Bbb T}$. By Lemma~\ref{ort-basis}, there is an adapted basis
$(a_1,...,a_5)$ of $T_{\pi(\sigma)}M$ such that $\sigma=2a_2\wedge a_4+a_3\wedge a_5$. In this
basis, $\xi_{\sigma}=\frac{1}{4}\ast(\sigma\wedge\sigma)=-a_1$ and $(a_2,a_4,a_3,a_5)$ is an oriented orthonormal basis of
$H^{\sigma}$. Note that $\imath_{\xi_{\sigma}}\sigma=0$. Considering $\Lambda^2H^{\sigma}$
as a subspace of $\Lambda^2T_{\pi(\sigma)}M$, we have
$\Lambda^2H^{\sigma}=\{\tau\in\Lambda^2T_{\pi(\sigma)}M: \imath_{\xi_{\sigma}}\tau=0\}$; here and
farther on $\imath:\Lambda^kTM\to \Lambda^{k-1}TM$, $1\leq k\leq 5$, stands for the interior
product, $g(\imath_X\tau,X_1\wedge...\wedge X_{k-1})=g(\tau,X\wedge X_1\wedge...\wedge X_{k-1})$
for $\tau\in\Lambda^kTM$ and $X,X_1,...,X_{k-1}\in TM$. If $\sigma_{\pm}$ are the $\Lambda^2_{\pm}H^{\sigma}$-components of $\sigma$,
we have $\sigma_{+}=\frac{3}{2}(a_2\wedge a_4+a_3\wedge a_5)$, $\sigma_{-}=\frac{1}{2}(a_2\wedge
a_4-a_3\wedge a_5)$ in the adapted basis $(a_1,...,a_5)$ we have chosen above. This shows that
\begin{equation}\label{sigma-pm}
\sigma_{\pm}=\frac{1}{2}(\sigma\pm\imath_{\xi_{\sigma}}(\ast\sigma)).
\end{equation}
Note also that, in view of (\ref{J}),
$$
g(\varphi^{\sigma}_{\pm}X,Y)=\frac{2}{2\pm1}g(\sigma_{\pm}, X\wedge Y),~ X,Y\in T_{\pi(\sigma)}M,
$$
since $\imath_{\xi_{\sigma}}\sigma_{\pm}=0$.

   According to Lemma~\ref{orient}, the vector bundle $\Lambda^2_3TM$ admits an orientation for which
every $(\kappa_1,\kappa_2,\kappa_3)$ defined via (\ref{kappa}) is an oriented frame. Denote by  $\times$
the usual vector product on the oriented Euclidean $3$-dimensional vector space
$(\Lambda^2_3T_pM,g_p)$, $p\in M$. Then the complex structure ${\cal J}$ of the fibre of ${\Bbb T}$ through $\sigma$ is given by
$$
{\cal J}V=-\sigma\times V, \quad V\in{\cal V}_{\sigma}.
$$
Thus
$$
\Phi^{(n)}_{\pm}V=(-1)^n\sigma\times V~\mbox{ for }~ V\in{\cal V}_{\sigma},
$$
$$
\Phi^{(n)}_{\pm}X^h_{\sigma}=(\varphi^{\sigma}_{\pm}X)^h_{\sigma}~\mbox{ for }~ X\in T_{\pi(\sigma)}M.
$$

\subsection {Computation of the tensors $N^{(n)}_{\pm}$}$\\$

Further on, for any $\sigma\in \Lambda^2TM$, the endomorphism $S_{\sigma}$ of $T_{\pi(\sigma)}M$
determined by $\sigma$ and the metric $g$ will also be denoted by $\sigma$.

\smallskip

Let $(x_1,...,x_5)$ be a local coordinate system of $M$ and $(E_1,...,E_5)$ an adapted frame of
$TM$ on $U$. Using this frame, define sections $(\kappa_1,\kappa_2,\kappa_3)$ of $\Lambda^2_3TM$ by
(\ref{kappa}). Set
\begin{equation}\label{coord}
\tilde x_{\alpha}(\tau)=x_{\alpha}\circ\pi(\tau),\,1\leq\alpha\leq
5,~y_i(\tau)=\frac{1}{5}g(\tau,\kappa_i\circ\pi(\tau)),\,1\leq i\leq 3,
\end{equation}
for $\tau\in\Lambda^2_3TM$. Then $(\tilde x_{\alpha},y_{i})$ is a local coordinate system of the
manifold $\Lambda^2_3TM$. For each vector field
$$X=\sum_{\alpha=1}^5 X^{\alpha}\frac{\partial}{\partial x_{\alpha}}$$
on $U$, its horizontal lift $X^h$ is given by
\begin{equation}\label{h-lift}
X^h=\sum_{\alpha=1}^5 (X^{\alpha}\circ\pi)\frac{\partial}{\partial\widetilde x_{\alpha}}
-\sum_{i,j=1}^3y_i(g(\nabla_X\kappa_i,\kappa_j)\circ\pi)\frac{\partial}{\partial y_j}.
\end{equation}
For  $\tau\in\Lambda^2_3TM$, we have
\begin{equation}\label{bra-2h}
[X^h,Y^h]_{\tau}=[X,Y]^h_{\tau}+R(X,Y)\tau,
\end{equation}
where $R(X,Y)\tau$ is the curvature of the connection $\nabla$ on the vector bundle
$\Lambda^2_3TM$.

Set
$$
f^{\pm}_{\alpha \beta}(\sigma)=g(\varphi^{\sigma}_{\pm}E_{\alpha},E_{\beta})
$$
for $\sigma\in(\pi|{\Bbb T})^{-1}(U)$. Then
\begin{equation}\label{Phi}
\begin{array}{c}
\Phi_{\pm}^{(n)} X^h=\sum_{\alpha,\beta=1}^ 5 (g(X,E_{\alpha})\circ\pi)f^{\pm}_{\alpha \beta}E_{\beta}^h,\\[6pt]
(\Phi_{\pm}^{(n)})^{2} X^h=\sum_{\alpha,\beta,\gamma =1}^ 5 (g(X,E_{\alpha})\circ\pi)f^{\pm}_{\alpha \beta}f^{\pm}_{\beta \gamma}E_{\gamma}^h.
\end{array}
\end{equation}

It is easy to see that, in the local coordinates  $\tilde
x_{\alpha}$, $y_i$ on ${\Bbb T}$,
\begin{equation}\label{xi}
\begin{array}{c}
\xi_{\sigma}=\displaystyle{(\frac{1}{2}y_1^2+\frac{1}{2}y_2^2-y_3^2)E_1-\sqrt 3 y_1y_2E_2 + \sqrt 3
y_1y_3E_3} -\displaystyle{\frac{\sqrt 3}{2}(y_1^2-y_2^2)E_4 - \sqrt 3 y_2y_3E_5}
\end{array}
\end{equation}
for $\sigma=y_1\kappa_1+y_2\kappa_2+y_3\kappa_3$. Then, setting
$$h_{\alpha}(y)=g(\xi_{\sigma},E_{\alpha}),$$ we have
\begin{equation}\label{f}
f^{\pm}_{\alpha \beta}=\frac{1}{2\pm1}\sum_{i=1}^3 y_i[g(\kappa_i,E_{\alpha}\wedge
E_{\beta})\circ\pi \pm \sum_{\epsilon=1}^5 h_{\epsilon}(y)g(\ast\kappa_i,E_{\alpha}\wedge
E_{\beta}\wedge E_{\epsilon})\circ\pi].
\end{equation}

\noindent {\bf Notation}. Fix a point $\sigma\in {\Bbb T}$ and take an adapted frame
$(E_1,...,E_5)$ in a coordinate neighbourhood $U$ of the point $p=\pi(\sigma)$ such that
$$
\nabla E_{\alpha}|_p=0, \, \alpha=1,...,5, ~ \mbox{ and }~ \sigma=(\kappa_3)_p,
$$
where $\kappa_3$ is defined  via (\ref{kappa}) by means of the frame $(E_1,...,E_5)$. We also use
this frame to define $\kappa_1$ and $\kappa_2$ by (\ref{kappa}). Then $\nabla\kappa_i|_p=0$,
$i=1,2,3$.

 Choose local coordinates $x_1,...,x_5$ of $M$ on $U$ and define local coordinates of ${\Bbb T}$  by (\ref{coord}).

For $w\in\Lambda_{3}^2TM$ and $\tau\in{\Bbb T}$ with $\pi(w)=\pi(\tau)$, we set
$$
\widetilde{w}_{\tau}=w-\frac{1}{5}g(w,\tau)\tau,
$$
the ${\cal V}_{\tau}$-component of $w$.

 Any section $S$ of $\Lambda^2_3TM$ near the point $p$ yields a (local) vertical vector field $\tilde S$ on ${\Bbb T}$ defined by
$$
\widetilde S_{\tau}=S_{\pi(\tau)}-\frac{1}{5}g(S_{\pi(\tau)},\tau)\tau.
$$

We shall use this notation throughout this section and the following ones without further referring
to it.


\begin{lemma}\label{hor-ver} Let $X$ be a vector field on $M$ and $S$ a section of $\Lambda^2_3TM$  defined on a neighbourhood of the point $p=\pi(\sigma)$.
Then:
$$
\begin{array}{llc}


[X^h,\widetilde S]_{\sigma}=\widetilde{(\nabla_X S)}_{\sigma},\\[10pt]


[X^h,\Phi_{\pm}^{(n)}\widetilde S]_{\sigma}=\Phi_{\pm}^{(n)}\widetilde{(\nabla_X S)}_{\sigma},\\[10pt]


[\Phi_{\pm}^{(n)} X^h,\widetilde S]_{\sigma}=\widetilde{\Big(\nabla_{\displaystyle{\varphi_{\pm}^{\sigma}X_p}}S\Big)}_{\sigma}\\[6pt]
-\displaystyle{\frac{1}{2\pm 1}\{\pm 3[g(X_p,\xi_{\sigma})\widetilde{S}_{\sigma}(\xi_{\sigma}) -
g(\widetilde{S}_{\sigma}(\xi_{\sigma}), X_p)\xi_{\sigma}]
+[1\pm (-1)]\widetilde{S}_{\sigma}X\}_{\sigma}^h},\\[10pt]


[\Phi_{\pm}^{(n)} X^h,\Phi_{\pm}^{(n)}\widetilde S]_{\sigma}=\Phi_{\pm}^{(n)}\widetilde{\Big(\nabla_{\displaystyle{\varphi_{\pm}^{\sigma}}X_p}S\Big)}_{\sigma}\\[6pt]
-\displaystyle{\frac{1}{2\pm 1}\{\pm
3[g(X_p,\xi_{\sigma})(\Phi_{\pm}^{(n)}\widetilde{S}_{\sigma})(\xi_{\sigma})
-g((\Phi_{\pm}^{(n)}\widetilde{S}_{\sigma})(\xi_{\sigma}), X_p)\xi_{\sigma}]}\\[6pt]
\hfill+[1\pm (-1)](\Phi_{\pm}^{(n)}\widetilde{S}_{\sigma})X\}_{\sigma}^h,\\[10pt]


[(\Phi_{\pm}^{(n)})^2 X^h,\widetilde S]_{\sigma}=\widetilde{\Big(\nabla_{\displaystyle{(\varphi_{\pm}^{\sigma})^2X_p}}S\Big)}_{\sigma}\\[6pt]
-\displaystyle{\frac{1}{2\pm 1}\{\pm
3[g(X_p,\xi_{\sigma})\varphi_{\pm}^{\sigma}(\widetilde{S}_{\sigma}(\xi_{\sigma}))
+g(\varphi_{\pm}^{\sigma}(\widetilde{S}_{\sigma}(\xi_{\sigma})),X_p)\xi_{\sigma}]}\\[6pt]
\hfill+[1\pm (-1)][\varphi_{\pm}^{\sigma}(\widetilde{S}_{\sigma}X)+
\widetilde{S}_{\sigma}(\varphi_{\pm}^{\sigma}X)\}_{\sigma}^h,\\[10pt]


[(\Phi_{\pm}^{(n)})^2 X^h,\Phi_{\pm}^{(n)}\widetilde
S]_{\sigma}=\Phi_{\pm}^{(n)}\widetilde{\Big(\nabla_{\displaystyle{(\varphi_{\pm}^{\sigma})^2X_p}}S\Big)}_{\sigma}\\[6pt]
-\displaystyle{\frac{1}{2\pm 1}\{\pm
3[g(X_p,\xi_{\sigma})\varphi_{\pm}^{\sigma}((\Phi_{\pm}^{(n)}\widetilde{S}_{\sigma})(\xi_{\sigma}))
+g(\varphi_{\pm}^{\sigma}((\Phi_{\pm}^{(n)}\widetilde{S}_{\sigma})(\xi_{\sigma})),X_p)\xi_{\sigma}]}\\[6pt]
\hfill+[1\pm
(-1)][\varphi_{\pm}^{\sigma}((\Phi_{\pm}^{(n)}\widetilde{S}_{\sigma})X)+
(\Phi_{\pm}^{(n)}\widetilde{S}_{\sigma})(\varphi_{\pm}^{\sigma}X)\}_{\sigma}^h.\\[10pt]

\end{array}
$$
\end{lemma}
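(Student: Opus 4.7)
The plan is to compute all six brackets by direct expansion in the local chart $(\widetilde{x}_\alpha, y_i)$ on $\Lambda^2_3 TM$, exploiting the adapted frame $(E_1,\dots,E_5)$ normalised by $\nabla E_\alpha|_p = 0$ (so that $\nabla \kappa_i|_p = 0$). Everything reduces to the first identity $[X^h,\widetilde{S}]_\sigma = \widetilde{(\nabla_X S)}_\sigma$. To prove it, I would insert the explicit form (\ref{h-lift}) of $X^h$ together with the definition $\widetilde{S}_\tau = S_{\pi(\tau)} - \tfrac{1}{5}g(S_{\pi(\tau)},\tau)\tau$ into $[X^h,\widetilde{S}]f$ for a test function $f$. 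Because $\nabla \kappa_i|_p = 0$, the $\partial/\partial y_j$-part of $X^h$ vanishes on the fibre over $p$ and only its horizontal derivatives contribute; after cancellation these pair up with $X(g(S,\kappa_i))|_p = g(\nabla_X S,\kappa_i)|_p$, which is the fibre component of $\nabla_X S$.

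The second identity is then essentially free: on $\cal V$ the endomorphism $\Phi^{(n)}_\pm$ is $V \mapsto (-1)^n \sigma \times V$, and the cross product on each fibre of $\Lambda^2_3 TM$ corresponds to the $SO(3)$-invariant Lie bracket on $so(3) \cong \Lambda^2_3$. Since $\nabla$ is an $SO(3)$-connection this cross product is parallel, hence $\Phi^{(n)}_\pm$ commutes with $[X^h,\cdot]$ on vertical fields, giving the stated formula. The third identity is the heart of the lemma. I would expand via (\ref{Phi}) as $\Phi^{(n)}_\pm X^h = \sum f^\pm_{\alpha\beta}(g(X,E_\alpha)\circ\pi)E^h_\beta$ and write
\begin{equation*}
[\Phi^{(n)}_\pm X^h,\widetilde{S}]_\sigma = \sum_{\alpha,\beta} \widetilde{S}_\sigma(f^\pm_{\alpha\beta})\,g(X,E_\alpha)_p\,(E^h_\beta)_\sigma + \sum_{\alpha,\beta} f^\pm_{\alpha\beta}(\sigma)\,g(X,E_\alpha)_p\,[E^h_\beta,\widetilde{S}]_\sigma.
\end{equation*}
The second sum is vertical and by the first identity equals $\widetilde{(\nabla_{\varphi^\sigma_\pm X} S)}_\sigma$. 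The first sum is horizontal; evaluating $\widetilde{S}_\sigma(f^\pm_{\alpha\beta})$ from (\ref{f}) amounts to vertical differentiation of the $y_i$ (producing $\widetilde{S}_\sigma$ viewed as a $2$-form) and of the quadratic functions $h_\epsilon(y)$ (producing terms involving $\widetilde{S}_\sigma(\xi_\sigma)$), after which (\ref{sigma-pm}) together with $\imath_{\xi_\sigma}\sigma = 0$ repackages the outcome into the displayed $\frac{\pm 3}{2\pm 1}$ and $\frac{1 \pm (-1)}{2\pm 1}$ corrections.

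The remaining three identities follow by iteration. For $[\Phi^{(n)}_\pm X^h,\Phi^{(n)}_\pm \widetilde{S}]$ I would replace $\widetilde{S}$ by the still vertical field $\Phi^{(n)}_\pm \widetilde{S}$ in identity three and then pull the parallel $\Phi^{(n)}_\pm$ outside the $\widetilde{(\nabla\cdot)}$-term; the horizontal correction keeps its shape with $\widetilde{S}_\sigma$ replaced by $\Phi^{(n)}_\pm \widetilde{S}_\sigma$. For $[(\Phi^{(n)}_\pm)^2 X^h,\widetilde{S}]$ and $[(\Phi^{(n)}_\pm)^2 X^h,\Phi^{(n)}_\pm \widetilde{S}]$, I would use $(\varphi^\sigma_\pm)^2 X = -X + g(X,\xi_\sigma)\xi_\sigma$ and apply identity three (respectively four) with $X$ replaced by $\varphi^\sigma_\pm X$; the sign pattern emerges from the commutation of $\varphi^\sigma_\pm$ with itself on $H^\sigma$, and the extra $\xi_\sigma$-components deliver exactly the $\varphi^\sigma_\pm(\widetilde{S}_\sigma X)$ and $\varphi^\sigma_\pm(\widetilde{S}_\sigma(\xi_\sigma))$ contributions in the last two formulas. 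The main obstacle throughout is the vertical differentiation in identity three: (\ref{f}) is quadratic in the fibre variables $y$, so its $\widetilde{S}_\sigma$-derivative splits into several pieces whose reassembly into a single closed-form expression in $\widetilde{S}_\sigma(\xi_\sigma)$ and $\widetilde{S}_\sigma X$ requires careful bookkeeping that ultimately reflects the $\sigma = \sigma_+ + \sigma_-$ splitting.
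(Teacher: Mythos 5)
Your proposal follows essentially the same route as the paper: direct computation in the fibre coordinates $(\widetilde x_\alpha,y_i)$ with the normalized adapted frame ($\nabla E_\alpha|_p=0$, hence $\nabla\kappa_i|_p=0$), expanding $\Phi^{(n)}_\pm X^h$ via (\ref{Phi}) and (\ref{f}) and splitting the bracket into the vertical part $\widetilde{(\nabla_{\varphi^\sigma_\pm X}S)}_\sigma$ plus a horizontal correction coming from the vertical derivative of $f^\pm_{\alpha\beta}$ --- this is exactly what the paper does. Two small points to tidy up: in your displayed expansion the Leibniz rule reads $[fA,B]=f[A,B]-(Bf)A$, so the $\widetilde S_\sigma(f^\pm_{\alpha\beta})$ term should carry a minus sign (consistent with the overall $-\tfrac{1}{2\pm1}$ in the statement); and for the last two identities one cannot literally ``apply identity three with $X$ replaced by $\varphi^\sigma_\pm X$'', since $(\Phi^{(n)}_\pm)^2X^h$ is not the $\Phi^{(n)}_\pm$-image of a horizontal lift of a vector field on $M$ --- one must instead expand $(\Phi^{(n)}_\pm)^2X^h=\sum(g(X,E_\alpha)\circ\pi)f^\pm_{\alpha\beta}f^\pm_{\beta\gamma}E^h_\gamma$ and let the vertical derivative act on the product $f^\pm_{\alpha\beta}f^\pm_{\beta\gamma}$ by Leibniz, which is precisely what produces the two terms $\varphi^\sigma_\pm(\widetilde S_\sigma X)+\widetilde S_\sigma(\varphi^\sigma_\pm X)$ that you correctly anticipate in the final formulas.
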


\begin{proof} In the local coordinates of ${\Bbb T}$ introduced above,
$$
\widetilde S=\sum_{i=1}^3\widetilde{S}_i\frac{\partial}{\partial y_i}\quad \mbox{ where }\quad
\widetilde{S}_i=\frac{1}{5}(g(S,\kappa_i)\circ\pi-y_i\sum_{j=1}^3y_jg(S,\kappa_j)\circ\pi).
$$
Note that, since $\nabla\kappa_i|_p=0$, $i=1,2,3$, we have
\begin{equation}\label{X}
\begin{array}{c}
X_{\sigma}^h=\sum_{\alpha=1}^5 X^{\alpha}(p)\Big(\displaystyle{\frac{\partial}{\partial \widetilde
x_{\alpha}}}\Big)_{\sigma},\quad [X^h,\displaystyle{\frac{\partial}{\partial y_i}}]_{\sigma}=0,\,
i=1,2,3,
\end{array}
\end{equation}
$$
(\nabla_X S)_p=\displaystyle{\frac{1}{5}}\sum_{i=1}^3X_p(g(S,\kappa_i))(\kappa_i)_p.
$$
Note also that, setting $S_i=\frac{1}{5}g(S,\kappa_i)\circ\pi$, we have
\begin{equation}\label{Phi-ver}
\Phi_{\pm}^{(n)}\widetilde S=(y_2S_3-y_3S_2)\frac{\partial}{\partial
y_1}+(y_3S_1-y_1S_3)\frac{\partial}{\partial y_2}+(y_1S_2-y_2S_1)\frac{\partial}{\partial y_3}.
\end{equation}

Now the first and the second formulas of the lemma follow from (\ref{h-lift}) and (\ref{Phi-ver}).

In view of (\ref{Phi}),(\ref{f}) and (\ref{X}), we have
$$
\begin{array}{l}
[\Phi_{\pm}^{(n)} X^h,\widetilde S]_{\sigma}=
\sum_{\alpha\beta=1}^5\sum_{i=1}^3 g_p(X,E_{\alpha})f^{\pm}_{\alpha \beta}(\sigma)(E_{\beta})^h_{\sigma}(\widetilde{S}_i)\Big (\displaystyle{\frac{\partial}{\partial
y_i}}\Big )_{\sigma}\\[10pt]
-\displaystyle{\frac{1}{2\pm 1}}\{\widetilde{S}_1(\sigma)[\pm 3g_p(\sqrt{3} E_1\wedge E_5,X\wedge E_{\beta})+[1\pm (-1)]g_p(\kappa_1,X\wedge E_{\beta})]\\[10pt]
+\widetilde{S}_2(\sigma)[\pm 3g_p(\sqrt{3} E_1\wedge E_3,X\wedge E_{\beta})+[1\pm (-1)]g_p(\kappa_2,X\wedge E_{\beta})]\}(E_{\beta})_{\sigma}^h \\[8pt]
=\widetilde{\Big(\nabla_{\displaystyle{\varphi_{\pm}^{\sigma}X_p}}S\Big)}_{\sigma}\\[8pt]
-\displaystyle{\frac{1}{2\pm 1}}\{\pm
3g_p(\xi_{\sigma}\wedge\widetilde{S}_{\sigma}(\xi_{\sigma}),X\wedge E_{\beta})
+[1\pm(-1)]g_p(\widetilde{S}_{\sigma},X\wedge E_{\beta})\}(E_{\beta})_{\sigma}^h
\end{array}
$$
since $(E_1)_p=-\xi_{\sigma}$ and $\sqrt{3}E_5=\kappa_1(E_1)$, $\sqrt{3} E_3=\kappa_2(E_1)$. This
implies the third formula. Using (\ref{Phi}) and (\ref{Phi-ver}) we obtain the forth formula by a
similar computation. Identities (\ref{Phi}) and (\ref{f}) imply that
$$
\begin{array}{l}
[(\Phi_{\pm}^{(n)})^2 X^h,\widetilde S]_{\sigma}=\widetilde{(D_{\displaystyle{(\varphi_{\pm}^{\sigma})^2X_p}}S)}_{\sigma}
-\displaystyle{\frac{1}{2\pm 1}}\{\pm 3[g_p(\xi_{\sigma}\wedge\widetilde{S}_{\sigma}(\xi_{\sigma}),X\wedge E_{\beta})\\[10pt]
\hskip 5.2cm +[1\pm(-1)]g_p(\widetilde{S}_{\sigma},X\wedge E_{\beta})]g_p(\varphi_{\pm}^{\sigma}E_{\beta},E_{\gamma})\\[8pt]
+[\pm 3g_p(\xi_{\sigma}\wedge\widetilde{S}_{\sigma}(\xi_{\sigma}),E_{\beta}\wedge E_{\gamma})\\[8pt]
\hskip 4cm+[1\pm(-1)]g_p(\widetilde{S}_{\sigma},E_{\beta}\wedge E_{\gamma})]g_p(\varphi_{\pm}^{\sigma}X,E_{\beta})\}(E_{\gamma})_{\sigma}^h.
\end{array}
$$
This gives the fifth formula of the lemma. The last one can be obtained by a similar computation.

\end{proof}

\begin{prop}
The contact distribution of $(\Phi_{\pm}^{(n)},\chi,h_t)$ is not integrable.
\end{prop}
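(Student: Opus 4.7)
The plan is to show that the codimension-one distribution $\mathcal{D}=\ker\eta$ is not involutive, by exhibiting two sections of $\mathcal{D}$ whose Lie bracket has nonzero $\chi$-component at a chosen point. Since $\eta$ vanishes on every section of $\mathcal{D}$, Cartan's formula reduces to $d\eta(U,V)=-\eta([U,V])$ on $\Gamma(\mathcal{D})\times\Gamma(\mathcal{D})$, so it is enough to find $U,V\in\Gamma(\mathcal{D})$ and a point $\sigma\in\mathbb{T}$ with $\eta([U,V])_\sigma\ne 0$.

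Fix $\sigma\in\mathbb{T}$ and adopt the notation established before Lemma~\ref{hor-ver}: an adapted frame $(E_1,\dots,E_5)$ around $p=\pi(\sigma)$ with $\nabla E_\alpha|_p=0$ and $\sigma=(\kappa_3)_p$, so that $\xi_\sigma=-(E_1)_p$ and $H^\sigma=\mathrm{span}\{(E_2)_p,(E_3)_p,(E_4)_p,(E_5)_p\}$. I take $U=\hat X:=X^h-g(X,\xi)\chi$ for a vector field $X$ on $M$ with $X_p=(E_3)_p\in H^\sigma$, and $V=\widetilde S$ for $S=\kappa_1$. A short calculation using $\chi_\sigma=(\xi_\sigma)^h_\sigma$ and $|\xi_\sigma|=1$ shows $\eta(\hat X)\equiv 0$ and $\eta(\widetilde S)\equiv 0$, so both lie in $\Gamma(\mathcal{D})$ identically.

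To evaluate $\eta([\hat X,\widetilde S])_\sigma$, note first that by the first identity of Lemma~\ref{hor-ver}, $[X^h,\widetilde S]_\sigma=\widetilde{(\nabla_X S)}_\sigma$ is vertical and hence $\eta$-killed. Setting $\alpha=g(X,\xi)$ and using $\alpha(\sigma)=g(X_p,\xi_\sigma)=0$, the correction $-[\alpha\chi,\widetilde S]_\sigma$ reduces at $\sigma$ to $\widetilde S|_\sigma(\alpha)\,\chi_\sigma$. Since $\xi_{\sigma'}=\tfrac14\ast(\sigma'\wedge\sigma')$ depends only on the fibre variable, differentiating along the vertical direction $\widetilde S_\sigma=(\kappa_1)_p$ (note $\kappa_1\perp\kappa_3=\sigma$, so no normalization term appears) yields $\tfrac12\ast(\sigma\wedge\kappa_1)_p$. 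Hence $\eta([\hat X,\widetilde S])_\sigma=\tfrac12\,g\bigl((E_3)_p,\ast(\sigma\wedge\kappa_1)_p\bigr)$. Plugging in $\sigma=2E_2\wedge E_4+E_3\wedge E_5$ and $\kappa_1=\sqrt{3}\,E_1\wedge E_5+E_2\wedge E_3+E_4\wedge E_5$, only the term $2\sqrt{3}\,E_2\wedge E_4\wedge E_1\wedge E_5=2\sqrt{3}\,E_1\wedge E_2\wedge E_4\wedge E_5$ survives, and its Hodge dual is $\pm 2\sqrt{3}\,E_3$. Therefore $\eta([\hat X,\widetilde S])_\sigma=\pm\sqrt{3}\neq 0$, proving that $\mathcal{D}$ is not involutive.

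The only delicate point is the vertical differentiation of $\xi$: since $\xi$ is not a vector field on $M$ but a $\sigma$-dependent element of $T_pM$, the contribution of the $\chi$-correction in $\hat X$ must be obtained by differentiating the algebraic formula $\xi_\sigma=\tfrac14\ast(\sigma\wedge\sigma)$ directly in the fibre, which accounts for the factor $\tfrac12\ast(\sigma\wedge\,\cdot\,)$ above and turns the calculation into a purely linear-algebraic check in the chosen adapted basis.
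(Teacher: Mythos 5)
Your argument is correct and follows essentially the same route as the paper: at $\sigma=(\kappa_3)_p$ the paper likewise pairs the vertical field $\widetilde\kappa_1$ with an extension of $(E_3)^h_\sigma$ orthogonal to $\chi$ (namely $A=\sqrt3\,y_1y_3E_1^h-(\tfrac12y_1^2+\tfrac12y_2^2-y_3^2)E_3^h$) and finds that their bracket equals $-\sqrt3\,\chi_\sigma$, matching your value $\eta([\hat X,\widetilde S])_\sigma=\pm\sqrt3$ up to the order of the bracket. The only differences are cosmetic: your extension $\hat X=X^h-g(X,\xi)\chi$ in place of $A$, and phrasing the obstruction through $\eta([U,V])$ rather than identifying the bracket directly.
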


\begin{proof}
The section $\kappa_1$ of $\Lambda^2_3TM$ yields the vertical vector field $\widetilde\kappa_1$ on
${\Bbb T}$ given by  $ \widetilde\kappa_1=(1-y_1^2)\frac{\partial}{\partial
y_1}-y_1y_2\frac{\partial}{\partial y_2}-y_1y_3\frac{\partial}{\partial y_3}. $ Put $A=\sqrt
3y_1y_3E_1^h-(\frac{1}{2}y_1^2+\frac{1}{2}y_2^2-y_3^2)E_3^h$. Then $\widetilde\kappa_1$ and $A$ are
vector fields on ${\Bbb T}$ perpendicular to the characteristic vector field $\chi$. Bearing in
mind identities (\ref{X}) and the fact that $\sigma=(\kappa_3)_{\pi(\sigma)}$, we easily see that
$[\widetilde\kappa_1,A]_{\sigma}=\sqrt 3(E_1)^h_{\sigma}=-\sqrt 3\chi_{\sigma}$. Thus
$\widetilde\kappa_1$ and $A$ are sections of the contact distributions whose Lie bracket is not a
section of it.
\end{proof}

\begin{lemma}\label{phi-hor-brac}
    Let $\sigma\in{\Bbb T}$ and let $X,Y$ be vector fields near the point $p=\pi(\sigma)$ such that $\nabla X|_p=0$, $\nabla Y|_p=0$. Then, for any $k,l=0,1,2$,
$$
\begin{array}{l}
[(\Phi_{\pm}^{(n)})^k X^{h},(\Phi_{\pm}^{(n)})^lY^{h}]_{\sigma}=-(T_p((\varphi_{\pm}^{\sigma})^k
X,(\varphi_{\pm}^{\sigma})^l Y))^h_{\sigma} + R_{p}((\varphi_{\pm}^{\sigma})^k
X,(\varphi_{\pm}^{\sigma})^l Y)\sigma,
\end{array}
$$
$n=1,2.$
\end{lemma}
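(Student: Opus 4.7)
The plan is to expand both horizontal vector fields in the adapted frame via formula (\ref{Phi}), apply the Leibniz rule to the Lie bracket, and then show that at $\sigma$ all ``derivative'' contributions vanish, so the bracket reduces to (\ref{bra-2h}) evaluated on the frame vectors.

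Concretely, iterating (\ref{Phi}) I would write
\begin{equation*}
(\Phi_\pm^{(n)})^k X^h=\sum_{\beta=1}^5 A^k_\beta\,E_\beta^h,\qquad
(\Phi_\pm^{(n)})^l Y^h=\sum_{\gamma=1}^5 B^l_\gamma\,E_\gamma^h,
\end{equation*}
where, with $X=\sum X^\alpha E_\alpha$, each $A^k_\beta$ is a polynomial expression in the pulled-back components $X^\alpha\circ\pi$ and in the structure functions $f^\pm_{\mu\nu}$ of (\ref{f}), and analogously for $B^l_\gamma$. By construction $\sum_\beta A^k_\beta(\sigma)E_\beta|_p=(\varphi_\pm^\sigma)^k X_p$ and similarly for $B$. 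The Leibniz rule then gives
\begin{equation*}
[(\Phi_\pm^{(n)})^k X^h,(\Phi_\pm^{(n)})^l Y^h]_\sigma
=\sum_{\beta,\gamma}A^k_\beta(\sigma)B^l_\gamma(\sigma)\,[E_\beta^h,E_\gamma^h]_\sigma+\Delta_\sigma,
\end{equation*}
where $\Delta_\sigma$ collects the terms in which some $A^k$ or $B^l$ is differentiated along a horizontal lift $E_\delta^h$ evaluated at $\sigma$.

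The core step is to prove $\Delta_\sigma=0$. Two observations suffice. First, since $E_1,\dots,E_5$ is an adapted frame, the $\kappa_i$ of (\ref{kappa}) are \emph{constant} linear combinations of $E_\alpha\wedge E_\beta$, so the quantities $g(\kappa_i,E_\alpha\wedge E_\beta)\circ\pi$ and $g(\ast\kappa_i,E_\alpha\wedge E_\beta\wedge E_\epsilon)\circ\pi$ appearing in (\ref{f}) are constant functions on $\pi^{-1}(U)$; consequently each $f^\pm_{\alpha\beta}$ is a function of the fiber coordinates $y_1,y_2,y_3$ alone. Second, the hypotheses $\nabla X|_p=\nabla Y|_p=0$ together with $\nabla E_\alpha|_p=0$ force $E_\delta(X^\alpha)(p)=E_\delta(Y^\alpha)(p)=0$. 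I then need $E_\delta^h(y_i)|_\sigma=0$; this follows by differentiating along the curve $\tau(t)=\kappa_3(\gamma(t))$ (which, because $\nabla\kappa_3|_p=0$, is horizontal at $t=0$) and noting that $y_i(\tau(t))=\tfrac{1}{5}g(\kappa_3,\kappa_i)|_{\gamma(t)}=\delta_{3i}$ is constant. By Leibniz, each summand of $E_\delta^h(A^k_\beta)|_\sigma$ and $E_\delta^h(B^l_\gamma)|_\sigma$ then vanishes, hence $\Delta_\sigma=0$.

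To finish, I would apply (\ref{bra-2h}) together with the identity $[E_\beta,E_\gamma]_p=-T_p(E_\beta,E_\gamma)$ (immediate from $\nabla E_\alpha|_p=0$ and the definition of torsion) to obtain $[E_\beta^h,E_\gamma^h]_\sigma=-(T_p(E_\beta,E_\gamma))^h_\sigma+R_p(E_\beta,E_\gamma)\sigma$, plug this into the surviving double sum, and invoke bilinearity of $T_p$ and $R_p$ in their first two arguments together with $\sum_\beta A^k_\beta(\sigma)E_\beta|_p=(\varphi_\pm^\sigma)^k X_p$. The only genuine obstacle is establishing $\Delta_\sigma=0$; once one observes that, in an adapted frame, the $f^\pm_{\alpha\beta}$ are pure fiber functions, the remainder is routine bookkeeping.
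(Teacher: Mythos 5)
Your proposal is correct and takes essentially the same route as the paper: the crux in both is that $Z^h_{\sigma}(f^{\pm}_{\alpha\beta})=0$ for every $Z\in T_pM$, which you establish by noting that the $f^{\pm}_{\alpha\beta}$ depend only on the fibre coordinates and that $E^h_{\delta}(y_i)|_{\sigma}=0$ (equivalent to the paper's appeal to $\nabla\kappa_i|_p=\nabla(\ast\kappa_i)|_p=0$), after which the Leibniz rule, (\ref{bra-2h}) and $[E_{\beta},E_{\gamma}]_p=-T_p(E_{\beta},E_{\gamma})$ yield the stated formula. You have simply written out in full the ``easy computation'' that the paper leaves implicit.
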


\begin{proof} We have $\nabla\kappa_i|_p=\nabla(\ast\kappa_i)|_p=0$, $i=1,2,3$, since  $\nabla E_{\alpha}|_p=0$, $1\leq\alpha\leq 5$.
Then it is clear from (\ref{f}) and (\ref{X}) that $Z^h_{\sigma}(f_{\alpha \beta})=0$ for every
$Z\in T_pM$. In view of the assumption  $\nabla X|_p=\nabla Y|_p=0$,  an easy computation using
(\ref{bra-2h}) and (\ref{Phi}) gives the lemma.
\end{proof}

Denote the Levi-Civita connection of the metric $h_t$ by $D$. Let $\nabla^{LC}$ be the Levi-Civita
connection of the metric $g$ on $M$. Then the Koszul formula and the fact that the Lie bracket of a
vertical and a horizontal vector field is a vertical vector field imply
$$
(D_{X^h}Y^h)_{\sigma}=(\nabla^{LC}_{X}Y)^h_{\sigma}+\frac{1}{2}R(X,Y)\sigma.
$$
We have $\nabla_{X}Y=\nabla^{LC}_{X}Y+\frac{1}{2}T(X,Y)$ since $\nabla$ is a metric connection with
skew-symmetric torsion $T$. Thus
\begin{equation}\label{D-hh}
(D_{X^h}Y^h)_{\sigma}=(\nabla_{X}Y-\frac{1}{2}T(X,Y))^h_{\sigma}+\frac{1}{2}R(X,Y)\sigma.
\end{equation}
Let $V$ be a vertical vector field in a neighbourhood of a point $\sigma\in{\Bbb T}$. Take a
section $S$  of $\Lambda^2_3TM$  near the point $p=\pi(\sigma)$ such that $S_p=V_{\sigma}$ and
$\nabla S|_p=0$. We can also find sections $S_1$, $S_2$ such that $\nabla S_1|_p=\nabla S_2|_p=0$
and the corresponding vertical vector fields $\widetilde S_1$, $\widetilde S_2$ constitute a frame
of the vertical bundle ${\cal V}$ in a neighbourhood of $\sigma$. Then the Koszul formula and the
first identity of Lemma~\ref{hor-ver} imply that $(D_{\widetilde S}\widetilde S_k)_{\sigma}\in{\cal
V}_{\sigma}$. It follows that, for every vertical vector field $W$, $D_{V}W$ is a vertical vector
field. Thus the fibres of ${\Bbb T}$ are totally geodesic submanifolds. This also follows from the
Vilms theorem \cite{V} (or \cite{Besse}). Then $D_{V}X^h$ is orthogonal to every vertical vector
field, thus $D_{V}X^h$ is a horizontal vector field. Hence $D_{V}X^h={\cal H}D_{X^h}V$ since
$[V,X^h]$ is vertical. Therefore
\begin{equation}\label{D-vh}
\begin{array}{c}
g(D_{V}X^h,Y^h)_{\sigma}=g(D_{X^h}V,Y^h)_{\sigma}=-h_t(V,D_{X^h}Y^h)_{\sigma}=
-\displaystyle{\frac{t}{2}}g(R(X,Y)\sigma,V).
\end{array}
\end{equation}
\begin{lemma}\label{D-chi}
Let $V\in{\cal V}_{\sigma}$ and $X\in T_{\pi(\sigma)}M$. Then
$$
\begin{array}{c}
(D_{X^h}\chi)_{\sigma}=-\displaystyle{\frac{1}{2}(T(X,\xi_{\sigma}))^h_{\sigma}+\frac{1}{2}R(X,\xi_{\sigma})\sigma},\quad (D_{V}\chi)_{\sigma}\in{\cal
H}_{\sigma},\\[6pt]
 h_t(D_{V}\chi,X^h)_{\sigma}=g(({\cal J}V)(\xi_{\sigma}),X)+\displaystyle{\frac{t}{2}}g(R(X,\xi_{\sigma})\sigma,V).
\end{array}
$$
\end{lemma}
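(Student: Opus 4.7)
All three identities can be attacked by exploiting the local expression $\chi=\sum_{\alpha=1}^{5}h_{\alpha}(y)\,E_{\alpha}^{h}$, where the coefficients $h_{\alpha}(y_{1},y_{2},y_{3})$ are read off from (\ref{xi}). For the first identity I would differentiate this expression along $X^{h}$, so
\[
(D_{X^{h}}\chi)_{\sigma}=\sum_{\alpha}(X^{h}h_{\alpha})(\sigma)E_{\alpha}^{h}|_{\sigma}+\sum_{\alpha}h_{\alpha}(\sigma)(D_{X^{h}}E_{\alpha}^{h})_{\sigma}.
\]
The first sum vanishes at $\sigma$ since, by (\ref{h-lift}) and $\nabla\kappa_{i}|_{p}=0$, the vector $X^{h}_{\sigma}$ has no $\partial/\partial y_{i}$ component while $h_{\alpha}$ depends only on the $y_{i}$. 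For the second sum, (\ref{D-hh}) combined with $\nabla E_{\alpha}|_{p}=0$ gives $(D_{X^{h}}E_{\alpha}^{h})_{\sigma}=-\frac{1}{2}T(X,E_{\alpha})^{h}_{\sigma}+\frac{1}{2}R(X,E_{\alpha})\sigma$; grouping $\sum_{\alpha}h_{\alpha}(\sigma)E_{\alpha}=\xi_{\sigma}$ then yields the stated formula. For the middle assertion I would appeal to the fact, established just above the lemma, that the fibres of $\pi\colon{\Bbb T}\to M$ are totally geodesic: for any vertical vector field $W$, $D_{V}W$ is again vertical, and since $\chi$ is horizontal, $h_{t}(\chi,W)\equiv 0$ and $h_{t}(\chi,D_{V}W)=0$, so $h_{t}(D_{V}\chi,W)=0$.

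For the third identity I would apply the Koszul formula to $(V,\chi,X^{h})$. Because $h_{t}(V,X^{h})=h_{t}(V,\chi)=0$, and $[X^{h},V]$ is vertical while $\chi$ is horizontal, only three terms survive:
\[
2h_{t}(D_{V}\chi,X^{h})=V(h_{t}(\chi,X^{h}))+h_{t}([V,\chi],X^{h})+h_{t}([X^{h},\chi],V).
\]
On the fibre over $p$ one has $h_{t}(\chi_{\tau},X^{h}_{\tau})=g(\xi_{\tau},X_{p})$ with $\xi_{\tau}=\frac{1}{4}\ast(\tau\wedge\tau)$, so differentiation along $V$ returns $\frac{1}{2}g(\ast(\sigma\wedge V),X_{p})$. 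A short check in the adapted basis with $\sigma=\kappa_{3}$ and $V\in\{\kappa_{1},\kappa_{2}\}$ establishes the pointwise identity $\frac{1}{2}\ast(\sigma\wedge V)=({\cal J}V)(\xi_{\sigma})$ on ${\cal V}_{\sigma}$, so this term becomes $g(({\cal J}V)(\xi_{\sigma}),X_{p})$. Writing $[V,\chi]=\sum V(h_{\alpha})E_{\alpha}^{h}+\sum h_{\alpha}[V,E_{\alpha}^{h}]$, the second summand is vertical and therefore $h_{t}$-orthogonal to $X^{h}$, while the first summand reproduces $V(h_{t}(\chi,X^{h}))$ exactly. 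Finally, taking $X$ with $\nabla X|_{p}=0$ and using (\ref{bra-2h}) gives $[X^{h},\chi]_{\sigma}=-T(X,\xi_{\sigma})^{h}_{\sigma}+R(X,\xi_{\sigma})\sigma$, and only the vertical curvature part pairs non-trivially with $V$, contributing $tg(R(X,\xi_{\sigma})\sigma,V)$. Dividing the sum by $2$ yields the claimed formula.

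I expect the principal technical step to be the pointwise identity $\frac{1}{2}\ast(\sigma\wedge V)=({\cal J}V)(\xi_{\sigma})$, which links the Hodge star on $M$ with the twistor complex structure ${\cal J}=-\sigma\times(\cdot)$ on ${\cal V}_{\sigma}$. With $\sigma=\kappa_{3}$ and $V$ chosen from the basis $(\kappa_{1},\kappa_{2})$ of ${\cal V}_{\sigma}$, it reduces to a brief explicit computation using the defining relations (\ref{kappa}) and $\xi_{\sigma}=-E_{1}$.
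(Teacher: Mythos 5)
Your proof is correct and follows essentially the same route as the paper: the first identity is obtained exactly as in the text from (\ref{X}), (\ref{chi}) and (\ref{D-hh}), and the horizontality of $(D_V\chi)_\sigma$ rests on the same totally-geodesic-fibres observation made just before the lemma. The only cosmetic difference is in the last identity, where you apply the Koszul formula to $(V,\chi,X^h)$ directly and identify $\frac{1}{2}\ast(\sigma\wedge V)=({\cal J}V)(\xi_{\sigma})$, whereas the paper first computes the vector $(D_V\chi)_{\sigma}=-(D_VE_1^h)_{\sigma}+((\sigma\times V)(E_1))^h_{\sigma}$ and then invokes the previously derived formula (\ref{D-vh}); both versions reduce to the same verification in an adapted basis with $\sigma=\kappa_3$.
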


\begin{proof}
We have
\begin{equation}\label{chi}
\begin{array}{c}
\chi=\displaystyle{(\frac{1}{2}y_1^2+\frac{1}{2}y_2^2-y_3^2)E_1^h-\sqrt 3 y_1y_2E_2^h + \sqrt 3
y_1y_3E_3^h} -\displaystyle{\frac{\sqrt 3}{2}(y_1^2-y_2^2)E_4^h - \sqrt 3 y_2y_3E_5^h}
\end{array}
\end{equation}
in view of (\ref{xi}). Identities (\ref{X}), (\ref{chi}) and (\ref{D-hh}) imply
$$
\begin{array}{c}
(D_{X^h}\chi)_{\sigma}=-D_{X^h}E_1^h=\frac{1}{2}(T(X,E_1))^h_{\sigma}-\frac{1}{2}R(X,E_1)\sigma=\\[6pt]
-\frac{1}{2}(T(X,\xi_{\sigma}))^h_{\sigma}+\frac{1}{2}R(X,\xi_{\sigma})\sigma.
\end{array}
$$
Let $V=\displaystyle{v_1\Big(\frac{\partial}{\partial
y_1}\Big)_{\sigma}+v_2\Big(\frac{\partial}{\partial y_2}}\Big)_{\sigma}$. Then, by (\ref{chi}),
$$
\begin{array}{c}
(D_{V}\chi)_{\sigma}=-(D_{V}E_1^h)_{\sigma}+(\sqrt{3}v_1E_3-\sqrt{3}v_2E_5)^h_{\sigma}=\\[6pt]
-(D_{V}E_1^h)_{\sigma}+((v_1\kappa_2-v_2\kappa_1)(E_1))^h_{\sigma}=-(D_{V}E_1^h)_{\sigma}+((\sigma\times
V)(E_1))^h_{\sigma}.
\end{array}
$$
This and (\ref{D-vh}) imply the second identity of the lemma. Note also that $(D_{V}E_1^h)_{\sigma}\in{\cal H}_{\sigma}$, so
$(D_{V}\chi,X^h)_{\sigma}\in{\cal H}_{\sigma}$.
\end{proof}


\newpage

Set
$$
\eta_t (A)=h_t(A,\chi),~\Omega^{(n)}_{\pm\>t}(A,B)=h_t(A,\Phi_{\pm}^{(n)}B),~ A,B\in T{\Bbb T}.
$$
\begin{cor}\label{d-eta} If $V,W\in{\cal V}_{\sigma}$ and $X,Y\in T_{\pi(\sigma)}M$, then
$$
\begin{array}{c}
d\eta_t(X_{\sigma}^h+V,Y_{\sigma}^h+W)=-\displaystyle{\frac{1}{2}}[g(T(X,\xi_{\sigma}),Y)-g(T(Y,\xi_{\sigma}),X)]\\[6pt]
\pm [g(\varphi_{\pm}^{\sigma}X,W(\xi_{\sigma}))-g(\varphi_{\pm}^{\sigma}Y,V(\xi_{\sigma}))].$$
\end{array}
$$
In particular, $d\eta_t\neq 0$ at every point and the almost contact structure
$(\Phi^{(n)}_{\pm},\chi,h_t)$ is not contact, i.e. $d\eta_t\neq\Omega^{(n)}_{\pm\>t}$.
\end{cor}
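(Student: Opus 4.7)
The plan is to apply the standard formula
\[
d\eta_t(A,B) = h_t(D_A\chi, B) - h_t(D_B\chi, A),
\]
which holds for any $1$-form because $D$ is metric and torsion-free, and then to plug in the expressions for $D_{X^h}\chi$ and $D_V\chi$ obtained in Lemma~\ref{D-chi}. By bilinearity I split $d\eta_t(X^h_\sigma+V,\,Y^h_\sigma+W)$ into its horizontal--horizontal, mixed, and vertical--vertical contributions and treat each separately.

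The horizontal--horizontal and vertical--vertical pieces are straightforward. For the former, the first identity of Lemma~\ref{D-chi} gives $(D_{X^h}\chi)_\sigma = -\tfrac12(T(X,\xi_\sigma))^h_\sigma + \tfrac12 R(X,\xi_\sigma)\sigma$; the vertical curvature term is annihilated by $Y^h$ under $h_t$, so antisymmetrising in $X,Y$ yields the first line of the claimed formula. For the latter, $\chi$ is horizontal, so $\eta_t$ vanishes on $\cal V$, and since the vertical distribution is integrable (its leaves are the fibres of $\pi$) the bracket $[V,W]$ remains vertical; hence $d\eta_t(V,W)=0$.

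The mixed piece is where the $\pm$ ambiguity enters. Combining the second and third identities of Lemma~\ref{D-chi}, the curvature contributions $\tfrac{t}{2}g(R(X,\xi_\sigma)\sigma,\,W)$ appearing in both $h_t(D_{X^h}\chi, W)$ and $h_t(D_W\chi, X^h)$ cancel, leaving $d\eta_t(X^h,W) = -g((\cal J W)(\xi_\sigma),\,X)$. What remains is the linear-algebraic identity
\[
-g((\cal J W)(\xi_\sigma),\,X) = \pm\, g(\varphi^\sigma_\pm X,\,W(\xi_\sigma)),
\]
which I verify in the adapted basis with $\sigma=\kappa_3$ and $\xi_\sigma=-a_1$: the space $\cal V_\sigma$ is spanned by $\kappa_1,\kappa_2$ with $\cal J\kappa_1=-\kappa_2$ and $\cal J\kappa_2=\kappa_1$, so the identity reduces to comparing $\kappa_1(\xi_\sigma)=-\sqrt{3}\,a_5$ and $\kappa_2(\xi_\sigma)=-\sqrt{3}\,a_3$ with the action of $\varphi^\sigma_\pm$ on $H^\sigma$ via~(\ref{J}) and (\ref{sigma-pm}). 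Both choices of sign succeed, reflecting the fact that $d\eta_t$ itself does not depend on $\pm$.

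For the ``in particular'' clause, I exhibit nonvanishing at any $\sigma\in\mathbb T$ by picking $0\neq W\in\cal V_\sigma$ with $W(\xi_\sigma)\neq 0$ (e.g.\ $W=\kappa_1$ in the adapted basis) and $X=\varphi^\sigma_\pm(W(\xi_\sigma))\in H^\sigma$, so that the mixed term evaluates to $d\eta_t(X^h,W) = \mp|W(\xi_\sigma)|^2\neq 0$. The non-contact property $d\eta_t\neq \Omega^{(n)}_{\pm\,t}$ then follows by comparing vertical--vertical components: $d\eta_t(V,W)\equiv 0$ as shown above, whereas $\Omega^{(n)}_{\pm\,t}(V,W) = (-1)^n t\, g(V,\sigma\times W)$ is nonzero for generic $V,W\in\cal V_\sigma$. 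The only real obstacle is the adapted-basis bookkeeping needed to pin down the sign in the mixed-term identity; everything else is essentially a direct substitution into Lemma~\ref{D-chi}.
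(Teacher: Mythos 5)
Your proposal is correct and follows essentially the same route as the paper: the author likewise feeds Lemma~\ref{D-chi} into $d\eta_t(A,B)=h_t(B,D_A\chi)-h_t(A,D_B\chi)$ and reduces the mixed term to the identity $({\cal J}V)(\xi_\sigma)=\pm\varphi^\sigma_\pm(V(\xi_\sigma))$ (the paper's identity (\ref{comm})), checked on $\kappa_1,\kappa_2$ in the adapted basis exactly as you do. The only (harmless) divergence is in the final clause, where the paper exhibits $d\eta_t\neq\Omega^{(n)}_{\pm\,t}$ on a mixed horizontal--vertical pair while you use a vertical--vertical pair; both work.
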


\begin{proof} Note first that, for every $V\in{\cal V}_{\sigma}$,
\begin{equation}\label{comm}
({\cal J}_{\pm}^{(n)}V)(\xi_{\sigma})=\pm (-1)^{n+1} J_{\pm}^{\sigma}(V(\xi_{\sigma})), \quad
n=1,2.
\end{equation}
Indeed, it is easy to check this identity for $V=(\kappa_1)_p$ and $V=(\kappa_2)_p$ taking into
account the fact that, since $\sigma=(\kappa_3)_p$, we have $\xi_{\sigma}=-(E_1)_p$,
$H^{\sigma}=span\{E_2,E_4,E_3,E_5\}$,  $J_{\pm}^{\sigma}E_2=E_4$, $J_{\pm}^{\sigma}E_3=\pm E_5$.
By (\ref{comm}), we have
$$
({\cal J}V)(\xi_{\sigma})=(-1)^{n+1}(\Phi^{n}_{\pm}V)(\xi_{\sigma})=(-1)^{n+1}({\cal
J}_{\pm}^{(n)}V)(\xi_{\sigma})=\pm J_{\pm}^{\sigma}(V(\xi_{\sigma}))\\=\pm
\varphi_{\pm}^{\sigma}(V(\xi_{\sigma})).
$$
Now Lemma~\ref{D-chi} implies the formula for $d\eta_t$
stated in the corollary.

For $\sigma=\kappa_3$, $X=E_3$, $W=\kappa_1$, we have $d\eta_t(X,W)=\sqrt{3}$. Thus
$d\eta_t(X^h_{\sigma},W)=\pm g(E_5,-\sqrt 3 E_5)=\mp\sqrt 3$ while $\Omega^{(n)}_{\pm\>
t}(X^h_{\sigma},W)=0$.
\end{proof}
\begin{cor}\label{intcur}
Every integral curve of the characteristic vector field $\chi$ is a geodesic.
\end{cor}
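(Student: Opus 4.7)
The plan is to show $D_\chi\chi=0$ at every point $\sigma\in\Bbb T$; once this is established, the integral curves of $\chi$ are automatically geodesics of $h_t$.

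Fix $\sigma\in\Bbb T$ and set $p=\pi(\sigma)$. Since $\chi_\sigma=(\xi_\sigma)_\sigma^h$ is a purely horizontal tangent vector at $\sigma$, I can choose any vector field $X$ on $M$ near $p$ with $X_p=\xi_\sigma$; then the horizontal lift satisfies $X_\sigma^h=\chi_\sigma$. The covariant derivative $D_{\chi_\sigma}\chi$ depends only on the tangent vector $\chi_\sigma$ (and on the vector field $\chi$ defined near $\sigma$), so
\[
(D_\chi\chi)_\sigma=(D_{X^h}\chi)_\sigma.
\]
Now the first identity of Lemma~\ref{D-chi} gives
\[
(D_{X^h}\chi)_\sigma=-\tfrac{1}{2}\bigl(T(X_p,\xi_\sigma)\bigr)^h_\sigma+\tfrac{1}{2}R(X_p,\xi_\sigma)\sigma.
\]
With the choice $X_p=\xi_\sigma$, the skew-symmetry of the torsion $T$ and of the curvature $R$ in their first two arguments forces both terms to vanish. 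Hence $(D_\chi\chi)_\sigma=0$, and since $\sigma$ was arbitrary, $D_\chi\chi=0$ identically on $\Bbb T$.

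There is no real obstacle here: the argument is a one-line consequence of Lemma~\ref{D-chi} together with the antisymmetry of $T$ and $R$. The only point requiring minimal care is the passage from the value $\chi_\sigma$ to a horizontally lifted vector field $X^h$ agreeing with $\chi$ at $\sigma$, which is justified because $D$ is a tensorial operation in its first argument.
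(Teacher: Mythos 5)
Your proposal is correct and is essentially the argument the paper intends: the corollary is stated immediately after Lemma~\ref{D-chi} with no separate proof, precisely because taking $X_p=\xi_\sigma$ in the first identity of that lemma kills both the torsion and curvature terms by antisymmetry, giving $D_\chi\chi=0$. Your added remark on the tensoriality of $D$ in its first slot is the only point needing care, and you handle it correctly.
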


Recall that a metric connection is said to be {\it characteristic} if its torsion
$T(X,Y,Z)\\=g(T(X,Y),Z)$ is totally skew-symmetric. According to \cite[Theorem 5.5]{BN} a
$SO(3)$-connection $\nabla$ on $M$ is characteristic if and only the tensor $\Upsilon$ satisfies
the identity $(\nabla^{LC}_v\Upsilon)(v,v,v)=0$ for every $v\in TM$ where $\nabla^{LC}$ is the
Levi-Civita connection of $(M,g)$. In this case we say that the $SO(3)$-structure is {\it nearly
integrable} by analogy with the case of a nearly K\"ahler structure.
The characteristic connection of such a structure is unique [ibid]. We refer to
\cite{ABF,BN,CF} for interesting example of nearly integrable structures.

\begin{cor}\label{Kill} If $\nabla$ is the characteristic connection, then:
\smallskip
\begin{enumerate}
\item[$(i)$] The vector field $\chi$ is conformally Killing only when it is Killing.
\smallskip
\item[$(ii)$] $\chi$ is Killing if and only if $\nabla$ is of constant curvature $t^{-1}$.
\end{enumerate}
\end{cor}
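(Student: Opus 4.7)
The plan is to compute $L_\chi h_t$ directly from the Koszul identity
$$
(L_\chi h_t)(A,B)=h_t(D_A\chi,B)+h_t(D_B\chi,A),
$$
using the formulas for $D_{X^h}\chi$ and $D_V\chi$ supplied by Lemma~\ref{D-chi}, and to split the test pair $(A,B)$ into the three natural type combinations. For the horizontal-horizontal case, the vertical curvature term in $D_{X^h}\chi$ drops out of the pairing with $Y^h$, leaving $h_t(D_{X^h}\chi,Y^h)=-\tfrac12 g(T(X,\xi_\sigma),Y)$; total skew-symmetry of the torsion -- precisely the defining property of the characteristic connection -- then makes the symmetric sum vanish identically. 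Consequently, any conformal identity $L_\chi h_t=2\lambda h_t$ forces $\lambda g(X,Y)\equiv 0$, hence $\lambda=0$. This is (i).

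For (ii), the vertical-vertical pairing is automatic because Lemma~\ref{D-chi} says $D_V\chi$ is horizontal, and the horizontal-vertical pair gives, after substitution,
$$
(L_\chi h_t)(X^h_\sigma,V)=t\,g(R(X,\xi_\sigma)\sigma,V)+g\bigl((\mathcal J V)(\xi_\sigma),X\bigr),
$$
so $\chi$ is Killing iff this vanishes for every $\sigma\in\mathbb T$, $X\in T_{\pi(\sigma)}M$ and $V\in\mathcal V_\sigma$.

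To translate this into ``$\nabla$ has constant curvature $t^{-1}$'', I would substitute $\mathcal J V=-\sigma\times V$, rewrite the induced curvature on $\Lambda^2_3TM$ via the adjoint action $R(X,Y)\sigma=\mathcal R(X\wedge Y)\times\sigma$ (up to a universal constant from the identification $\Lambda^2_3\cong so(3)$), and apply the triple-product identity $g(a\times b,c)=g(a,b\times c)$ to rephrase the Killing equation as
$$
g\bigl(\sigma\times V,\;t\,\mathcal R(X\wedge\xi_\sigma)-\pi_{\Lambda^2_3}(\xi_\sigma\wedge X)\bigr)=0.
$$
Since $V\mapsto\sigma\times V$ is a bijection of $\sigma^\perp\subset\Lambda^2_3T_pM$ onto itself, this is equivalent to $t\,\mathcal R(X\wedge\xi_\sigma)\equiv\pi_{\Lambda^2_3}(\xi_\sigma\wedge X)\pmod{\mathbb R\sigma}$ for every $X$ and every $\sigma\in\mathbb T_p$. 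The implication ``$\mathcal R=t^{-1}\pi_{\Lambda^2_3}\Rightarrow$ Killing'' is then immediate.

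The hard part is the converse: promoting this restricted identity -- which constrains $\mathcal R$ only on 2-vectors of the form $X\wedge\xi_\sigma$ and only modulo a $\sigma$-component -- to the full equality $\mathcal R=t^{-1}\pi_{\Lambda^2_3}$ on $\Lambda^2TM$. The plan is to exploit $SO(3)$-equivariance of $\mathcal R$ together with the observation that $\{\xi_\sigma:\sigma\in\mathbb T_p\}$ is a full $SO(3)$-orbit in the unit sphere of $T_pM$, and to use the algebraic Bianchi identity for the characteristic connection (with its totally skew torsion) to eliminate the $\mathbb R\sigma$-ambiguity and linearly extend the relation to arbitrary decomposable 2-vectors. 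This polarization is where I expect all the genuine algebra to sit.
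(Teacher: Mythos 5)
Your reduction is exactly the paper's: part $(i)$ and the forward direction of $(ii)$ are fine, and your horizontal--vertical formula $({\cal L}_\chi h_t)(X^h_\sigma,V)=t\,g(R(X,\xi_\sigma)\sigma,V)+g(({\cal J}V)(\xi_\sigma),X)$ is precisely what Lemma~\ref{D-chi} yields. The genuine gap is that the converse of $(ii)$ --- the only step with real content --- is left as a plan rather than an argument, and the plan points at the wrong tools. The algebraic Bianchi identity plays no role here, and the phrase ``$\{\xi_\sigma\}$ is a full $SO(3)$-orbit in the unit sphere'' needs care: by (\ref{xi}) that orbit is only a $2$-dimensional Veronese-type surface in $S^4\subset T_pM$, not the whole sphere, so equivariance alone neither shows that the $2$-vectors $X\wedge\xi_\sigma$ exhaust enough of $\Lambda^2T_pM$ nor removes the ${\Bbb R}\sigma$-ambiguity you correctly identify.

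What actually closes the argument is elementary linear algebra in the fibre. Set ${\cal S}=t{\cal R}-{\cal P}$; your condition says ${\cal S}(X\wedge\xi_\sigma)\in{\Bbb R}\sigma$ for every $X\in T_pM$ and every $\sigma\in{\Bbb T}_p$. Fix an adapted basis $(a_1,\dots,a_5)$ and read off from (\ref{xi}) that $\xi_{\kappa_3}=-a_1$, $\xi_{\kappa_1}=\frac12 a_1-\frac{\sqrt3}{2}a_4$, $\xi_{\kappa_2}=\frac12 a_1+\frac{\sqrt3}{2}a_4$. Adding the $\kappa_1$- and $\kappa_2$-conditions gives ${\cal S}(X\wedge a_1)\in{\Bbb R}\kappa_1+{\Bbb R}\kappa_2$, while the $\kappa_3$-condition gives ${\cal S}(X\wedge a_1)\in{\Bbb R}\kappa_3$; hence ${\cal S}(X\wedge a_1)=0$. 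The ambiguity dies precisely because the three vectors $\xi_{\kappa_i}$ are confined to $\mathrm{span}\{a_1,a_4\}$ while the three ambiguity directions $\kappa_i$ are linearly independent. Since the coefficients of $\xi_\sigma$ in (\ref{xi}) are the five independent harmonic quadratics in $(y_1,y_2,y_3)$, the vectors $\xi_\sigma$, $\sigma\in{\Bbb T}_p$, linearly span $T_pM$ (e.g.\ $\xi_{(\kappa_1+\kappa_2)/\sqrt2}=\frac12 a_1-\frac{\sqrt3}{2}a_2$ brings in $a_2$), so the same argument applied to all adapted bases kills ${\cal S}$ on a spanning set of decomposable $2$-vectors, giving $t{\cal R}={\cal P}$. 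This is the content of the paper's passage from the Killing condition to identity (\ref{r}) and then, via (\ref{F}), to $t{\cal R}={\cal P}$; until you supply it, your proof of $(ii)$ is incomplete.
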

\begin{proof}
Suppose that ${\cal L}_{\chi}h_t=\lambda h_t$ for some constant $\lambda$, ${\cal L}$ being the Lie
derivative. According to Lemma~\ref{D-chi}, this identity is equivalent to the identities
$g(T(X,\xi_{\sigma}),Y)+g(T(Y,\xi_{\sigma}),X)=-2\lambda g(X,Y)$  and
$tg(R(X,\xi_{\sigma})\sigma,V)+g(({\cal J}V)(\xi_{\sigma}),X)=0$ for every $\sigma\in{\Bbb T}$,
$X,Y\in T_{\pi(\sigma)}M$, $V\in{\cal V}_{\sigma}$. The first of these identities holds if and only
if $\lambda=0$ since the torsion is skew-symmetric. Take an adapted basis $(a_1,...,a_5)$ of
$T_{\pi(\sigma)}M$. Then, in view of (\ref{xi}), the second identity is equivalent to
\begin{equation}\label{r}
tg(R(X,a_{\alpha})\sigma,V)+g({\cal J}V(a_{\alpha}),X)=0, \quad \alpha=1,...,5.
\end{equation}
Using the fact that $R$ is the curvature tensor of a metric connection, one can check that, for
every $\sigma,\tau\in\Lambda^2_3 T_pM$,
\begin{equation}\label{F}
g(R(X,Y)\sigma,\tau)=-g({\cal R}(X\wedge Y),\sigma\times\tau),\quad X,Y\in T_pM.
\end{equation}
It follows that identity (\ref{r}) is equivalent to $tg({\cal R}(X\wedge Y),{\cal J}V)=g({\cal J}V,X\wedge Y)$. The latter identity
holds if and only if $tg({\cal R}(X\wedge Y),\tau)=g(X\wedge Y,\tau)$  for every
$\tau\in\Lambda^2_3T_{\pi(\sigma)}M$, hence $t{\cal R}={\cal P}$ since ${\cal R}$ takes its values
in $\Lambda^2_3TM$.
\end{proof}

Lemma~\ref{hor-ver}, identity
(\ref{comm}) and Corollary~\ref{d-eta} imply the following.
\begin{prop}\label{NXV}
If $V\in{\cal V}_{\sigma}$, $\sigma\in{\Bbb T}$, and $X\in T_{\pi(\sigma)}M$, then
$$
\begin{array}{c}
N^{(n)}_{+}(X^h_{\sigma},V)=\{[(-1)^{n+1}-1]g(X,\xi_{\sigma})(J^{(n)}_{+}V)(\xi_{\sigma})+\\[6pt]
[(-1)^n+1]g((J^{(n)}_{+}V)(\xi_{\sigma}),X)\xi_{\sigma}\}_{\sigma}^h;\\[6pt]
N^{(n)}_{-}(X^h_{\sigma},V)=\{3[(-1)^{n}+1]g(X,\xi_{\sigma})(J^{(n)}_{-}V)(\xi_{\sigma})+\\[6pt]
[(-1)^n+3]g((J^{(n)}_{-}V)(\xi_{\sigma}),X)\xi_{\sigma}+2(J^{(n)}_{-}V)(X)-2\varphi^{\sigma}_{-}(VX)\}_{\sigma}^h.
\end{array}
$$
\end{prop}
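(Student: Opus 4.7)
The plan is to compute each of the five terms in
\[
N^{(n)}_{\pm}(X^h_\sigma,V)=(\Phi^{(n)}_\pm)^2[X^h,V]_\sigma+[\Phi^{(n)}_\pm X^h,\Phi^{(n)}_\pm V]_\sigma-\Phi^{(n)}_\pm[\Phi^{(n)}_\pm X^h,V]_\sigma-\Phi^{(n)}_\pm[X^h,\Phi^{(n)}_\pm V]_\sigma+d\eta_t(X^h,V)\chi_\sigma
\]
by first extending $V$ to a vertical vector field. Concretely, I would pick a section $S$ of $\Lambda^2_3TM$ on a neighbourhood of $p=\pi(\sigma)$ with $S_p=V$ and $\nabla S|_p=0$, so that $\widetilde S_\sigma=V$, and extend $X$ so that $\nabla X|_p=0$. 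The standing notation from the subsection (with the adapted frame $E_\alpha$ parallel at $p$ and $\sigma=(\kappa_3)_p$) is then in force, and everything reduces to an application of Lemma~\ref{hor-ver}.

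The first and second formulas of Lemma~\ref{hor-ver} immediately give $[X^h,\widetilde S]_\sigma=0$ and $[X^h,\Phi^{(n)}_\pm\widetilde S]_\sigma=0$ because $\nabla S|_p=0$, so the first and fourth summands of $N^{(n)}_\pm$ vanish. In the third and fourth formulas of the lemma the covariant-derivative terms vanish for the same reason, and only the horizontal ``correction'' survives. Applying $-\Phi^{(n)}_\pm$ to $[\Phi^{(n)}_\pm X^h,\widetilde S]_\sigma$ requires pushing $\varphi^\sigma_\pm$ through that correction; the identities $\varphi^\sigma_\pm(\xi_\sigma)=0$ and $(\varphi^\sigma_\pm)^2Y=-Y+g(Y,\xi_\sigma)\xi_\sigma$ make this transparent. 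The identity (\ref{comm}), i.e.\ $(\Phi^{(n)}_\pm V)(\xi_\sigma)=\pm(-1)^{n+1}\varphi^\sigma_\pm(V(\xi_\sigma))$, converts the $(\Phi^{(n)}_\pm \widetilde S_\sigma)(\xi_\sigma)$ factors appearing in $[\Phi^{(n)}_\pm X^h,\Phi^{(n)}_\pm\widetilde S]_\sigma$ into the quantity $(J^{(n)}_\pm V)(\xi_\sigma)$ used in the statement. Finally, Corollary~\ref{d-eta} evaluated on a pure horizontal/vertical pair yields $d\eta_t(X^h_\sigma,V)=\pm g(\varphi^\sigma_\pm X,V(\xi_\sigma))=\mp g(X,(J^{(n)}_\pm V)(\xi_\sigma))$ (up to the $(-1)^{n+1}$ sign), which supplies the $\chi_\sigma$ contribution and interacts nontrivially with the $g(\cdot,\cdot)\xi_\sigma$ pieces of the two bracket terms.

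Collecting the surviving contributions and separating out the coefficient structure $\tfrac{1}{2\pm1}\{\pm3[\,\cdots\,]+[1\pm(-1)]\widetilde S_\sigma X\}$ in Lemma~\ref{hor-ver} then produces the two claimed formulas. In the $+$ case the factor $[1+(-1)]=0$ kills the $\widetilde S_\sigma X$ and $(\Phi^{(n)}_+\widetilde S_\sigma)X$ pieces, so only the $\xi_\sigma$-projected terms remain; after cancellation against $d\eta_t(X^h,V)\chi_\sigma$ they package exactly into the coefficients $[(-1)^{n+1}-1]$ and $[(-1)^n+1]$. In the $-$ case, the factor $[1-(-1)]=2$ keeps those pieces alive, producing the additional $2(J^{(n)}_-V)X-2\varphi^\sigma_-(VX)$ summand.

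The main obstacle is sign bookkeeping: one has to juggle the three signs $\pm$, $(-1)^n$, and $(-1)^{n+1}$ coming respectively from the choice of $\varphi^\sigma_\pm$, from $\Phi^{(n)}_\pm$ on the vertical bundle, and from identity (\ref{comm}), while correctly tracking when $\varphi^\sigma_\pm$ annihilates $\xi_\sigma$-directed pieces versus when it permutes them into $H^\sigma$-pieces. Once the bookkeeping is organised by separating ``$\xi_\sigma$-component'' from ``$H^\sigma$-component'' at each step, the computation is essentially routine.
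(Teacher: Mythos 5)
Your proposal follows exactly the paper's route: the paper gives no separate argument for this proposition beyond the one-line remark that it follows from Lemma~\ref{hor-ver}, identity (\ref{comm}) and Corollary~\ref{d-eta}, and your expansion --- extending $V$ to $\widetilde S$ with $\nabla S|_p=0$ so that the first and fourth bracket terms vanish, reading off the horizontal corrections from the third and fourth formulas of Lemma~\ref{hor-ver} (with the $[1\pm(-1)]$ factor explaining why the extra $\widetilde S_\sigma X$ and $\varphi^\sigma_-(VX)$ terms survive only in the minus case), converting via (\ref{comm}), and adding the $d\eta_t$ contribution from Corollary~\ref{d-eta} --- is precisely that computation. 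The only point left open is the sign bookkeeping you yourself flag, which is indeed where all the work lies but involves no further idea.
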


Lemma~\ref{phi-hor-brac} and Corollary~\ref{d-eta} imply

\begin{prop}\label{NXY} Let $\sigma\in{\Bbb T}$ and $X,Y\in T_{\pi(\sigma)}M$. Then
$$
\begin{array}{l}
N^{(n)}_{\pm}(X^h_{\sigma},Y^h_{\sigma})=\{T(X,Y)-T(\varphi^{\sigma}_{\pm}X,\varphi^{\sigma}_{\pm}Y)
 +\varphi^{\sigma}_{\pm}T(\varphi^{\sigma}_{\pm}X,Y)+
\varphi^{\sigma}_{\pm}T(X,\varphi^{\sigma}_{\pm}Y)\}_{\sigma}^h\\[6pt]
-R(X,Y)\sigma+R(\varphi^{\sigma}_{\pm}X,\varphi^{\sigma}_{\pm}Y)\sigma -{\cal
J}^{(n)}_{\pm}R(\varphi^{\sigma}_{\pm}X,Y)\sigma-{\cal
J}^{(n)}_{\pm}R(X,\varphi^{\sigma}_{\pm}Y)\sigma.
\end{array}
$$
\end{prop}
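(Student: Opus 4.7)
The plan is to simply expand the definition of $N^{(n)}_{\pm}$ on two horizontal vectors and evaluate each term by appealing to Lemma~\ref{phi-hor-brac}. Extend $X_p$ and $Y_p$ to vector fields with $\nabla X|_p=\nabla Y|_p=0$; since $N^{(n)}_{\pm}$ is a tensor, the result at $\sigma$ depends only on $X_p,Y_p$. Apply the lemma with $(k,l)=(0,0),(1,1),(1,0),(0,1)$ to obtain the four Lie brackets appearing in
$$N^{(n)}_{\pm}(X^h,Y^h)=(\Phi^{(n)}_{\pm})^{2}[X^h,Y^h]+[\Phi^{(n)}_{\pm}X^h,\Phi^{(n)}_{\pm}Y^h]-\Phi^{(n)}_{\pm}[\Phi^{(n)}_{\pm}X^h,Y^h]-\Phi^{(n)}_{\pm}[X^h,\Phi^{(n)}_{\pm}Y^h]+d\eta_t(X^h,Y^h)\chi.$$
Each bracket equals a horizontal lift of a torsion term plus a curvature term $R(\cdot,\cdot)\sigma$, the latter being vertical because $\nabla$ is metric (so $g(R(\cdot,\cdot)\sigma,\sigma)=0$).

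Next I would apply $\Phi^{(n)}_{\pm}$ and $(\Phi^{(n)}_{\pm})^{2}$ to the resulting vectors. Three facts do all the work: (a) on horizontal vectors $\Phi^{(n)}_{\pm}Z^h_{\sigma}=(\varphi^{\sigma}_{\pm}Z)^h_{\sigma}$, (b) on vertical vectors $\Phi^{(n)}_{\pm}=\mathcal{J}^{(n)}_{\pm}$, and (c) the identity $(\varphi^{\sigma}_{\pm})^{2}Z=-Z+g(Z,\xi_{\sigma})\xi_{\sigma}$, which forces
$$(\Phi^{(n)}_{\pm})^{2}Z^h_{\sigma}=-Z^h_{\sigma}+g(Z,\xi_{\sigma})\chi_{\sigma}.$$
Applied to $[X^h,Y^h]_{\sigma}=-T(X,Y)^h_{\sigma}+R(X,Y)\sigma$, this yields $T(X,Y)^h_{\sigma}-g(T(X,Y),\xi_{\sigma})\chi_{\sigma}-R(X,Y)\sigma$, while the remaining three brackets produce the terms $-T(\varphi^{\sigma}_{\pm}X,\varphi^{\sigma}_{\pm}Y)^h+R(\varphi^{\sigma}_{\pm}X,\varphi^{\sigma}_{\pm}Y)\sigma$, $+\varphi^{\sigma}_{\pm}T(\varphi^{\sigma}_{\pm}X,Y)^h-\mathcal{J}^{(n)}_{\pm}R(\varphi^{\sigma}_{\pm}X,Y)\sigma$, and $+\varphi^{\sigma}_{\pm}T(X,\varphi^{\sigma}_{\pm}Y)^h-\mathcal{J}^{(n)}_{\pm}R(X,\varphi^{\sigma}_{\pm}Y)\sigma$.

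The only subtle point is the spurious $-g(T(X,Y),\xi_{\sigma})\chi_{\sigma}$ produced by (c). To dispose of it I invoke Corollary~\ref{d-eta} with $V=W=0$: using the total skew-symmetry of $T$ (characteristic connection) one gets $d\eta_t(X^h_{\sigma},Y^h_{\sigma})=-\tfrac{1}{2}[g(T(X,\xi_{\sigma}),Y)-g(T(Y,\xi_{\sigma}),X)]=g(T(X,Y),\xi_{\sigma})$, so the term $d\eta_t(X^h,Y^h)\chi$ precisely cancels $-g(T(X,Y),\xi_{\sigma})\chi_{\sigma}$. Collecting the surviving terms gives exactly the formula claimed.

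The only real obstacle is bookkeeping: tracking the $(-1)^{n}$ signs hidden in $\mathcal{J}^{(n)}_{\pm}$, noting that $R(X,Y)\sigma\in\mathcal{V}_{\sigma}$ so $\Phi^{(n)}_{\pm}$ acts on it by $\mathcal{J}^{(n)}_{\pm}$, and verifying the cancellation of the $g(T(X,Y),\xi_{\sigma})\chi$ contribution. Once these are settled, the computation is a direct substitution with no further input required.
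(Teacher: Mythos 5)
Your proposal is correct and follows essentially the same route as the paper, whose proof of this proposition consists precisely of citing Lemma~\ref{phi-hor-brac} (for the four Lie brackets) and Corollary~\ref{d-eta} (for the $d\eta_t(X^h,Y^h)\chi$ term); your explicit verification that $d\eta_t(X^h_{\sigma},Y^h_{\sigma})=g(T(X,Y),\xi_{\sigma})$ cancels the spurious $-g(T(X,Y),\xi_{\sigma})\chi_{\sigma}$ coming from $(\Phi^{(n)}_{\pm})^2$ is exactly the bookkeeping the paper leaves implicit. The only remark worth adding is that the skew-symmetry of $T$ you invoke is already built into the paper's standing hypotheses at this point, since formula (\ref{D-hh}) underlying Corollary~\ref{d-eta} assumes $\nabla$ is a metric connection with totally skew-symmetric torsion.
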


\smallskip

\begin{prop} The almost contact structures $(\Phi^{(1)}_{-},\chi,h_t)$ and $(\Phi^{(2)}_{\pm},\chi,h_t)$
are not normal.
\end{prop}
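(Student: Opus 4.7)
The plan is to exploit the horizontal-vertical formula for $N^{(n)}_\pm(X^h_\sigma, V)$ established in Proposition \ref{NXV}. Since that formula is purely algebraic in $\xi_\sigma$, $\varphi^\sigma_\pm$, and $J^{(n)}_\pm$, and does not involve the torsion or curvature of the characteristic connection, it suffices, for each of the three structures in the statement, to exhibit a single point $\sigma \in {\Bbb T}$ together with $X \in T_{\pi(\sigma)}M$ and $V \in {\cal V}_\sigma$ for which $N^{(n)}_\pm(X^h_\sigma, V)$ is a nonzero horizontal vector.

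I would work in the setting already fixed before Lemma \ref{hor-ver}: an adapted frame $(E_1, \ldots, E_5)$ at $p = \pi(\sigma)$ with $\sigma = (\kappa_3)_p$. Then $\xi_\sigma = -E_1$, $H^\sigma = \mathrm{span}\{E_2, E_3, E_4, E_5\}$, $J^\sigma_\pm E_2 = E_4$ and $J^\sigma_\pm E_3 = \pm E_5$, while ${\cal V}_\sigma$ is spanned by $(\kappa_1)_p$ and $(\kappa_2)_p$; the identity $\Phi^{(n)}_\pm V = (-1)^n \sigma \times V$ yields $J^{(n)}_\pm (\kappa_1)_p = (-1)^n (\kappa_2)_p$ at $\sigma$. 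The actions $\kappa_i(E_\alpha)$ that are needed are read off directly from formula (\ref{kappa}).

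For $(\Phi^{(2)}_+, \chi, h_t)$ I would take $X = E_1$ and $V = (\kappa_1)_p$: the summand $g((J^{(2)}_+ V)(\xi_\sigma), X)\xi_\sigma$ vanishes, but $-2 g(E_1, \xi_\sigma)(J^{(2)}_+ V)(\xi_\sigma)$ reduces to a nonzero multiple of $(E_3)^h_\sigma$. For $(\Phi^{(2)}_-, \chi, h_t)$ I would choose $X = E_3$, $V = (\kappa_1)_p$: the $\xi_\sigma$-summand and the combination $2 J^{(2)}_- V(X) - 2 \varphi^\sigma_-(VX)$ together yield a nonzero combination of $(E_1)^h_\sigma$ and $(E_4)^h_\sigma$. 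Finally for $(\Phi^{(1)}_-, \chi, h_t)$ the coefficient $3[(-1)^n + 1]$ in Proposition \ref{NXV} vanishes at $n = 1$, so only three terms survive; taking $X = E_1$ and $V = (\kappa_1)_p$ kills the $\xi_\sigma$-summand, and $2 J^{(1)}_- V(X) - 2 \varphi^\sigma_-(V X) = -4\sqrt{3}\, E_3$ gives a nonzero horizontal vector. The only real difficulty is the sign bookkeeping in the values of $S_{\kappa_i}$ at $E_\alpha$; conceptually the argument is simply that the algebraic cancellations that annihilate $N^{(1)}_+(X^h_\sigma, V)$ identically do not occur for the other three structures, so all three fail to be normal at every point of ${\Bbb T}$.
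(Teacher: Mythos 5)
Your proposal is correct and follows essentially the same route as the paper: both evaluate the explicit formula of Proposition~\ref{NXV} at $\sigma=(\kappa_3)_p$ on a horizontal--vertical pair and observe that the result is a nonzero horizontal vector (the paper uses $X=E_3$, $V=(\kappa_1)_p$ for all three structures, while you vary the choice of $X$, which is an immaterial difference). Your individual evaluations check out, so the argument goes through as stated.
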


\begin{proof} Take $\sigma=\kappa_3$ for an adapted basis $\{E_1,...,E_5\}$  and set
$V=(\kappa_1)_{\pi(\sigma)}$, $X=E_3$. We have $\xi_{\sigma}=-E_1$,
${\cal J}^{(n)}_{\pm}V=(-1)^n(\kappa_2)_{\pi(\sigma)}$,
$\varphi^{\sigma}_{-}E_2=E_4$, $\varphi^{\sigma}_{-}E_3=- E_5$.
Then, by the Proposition~\ref{NXV},
$N^{(1)}_{-}(E_3^h,\kappa_1)_{\sigma}=2(E_4)^h_{\sigma}$,
$N^{(2)}_{+}(E_3^h,\kappa_1)_{\sigma}=2\sqrt{3}(E_1)^h_{\sigma}$,
$N^{(2)}_{-}(E_3^h,\kappa_1)_{\sigma}=2(\sqrt{3}E_1+E_4)^h_{\sigma}$.
\end{proof}

In order to analyze the normality condition $N^{(1)}_+=0$, we need, in view of
Proposition~\ref{NXY}, some preliminaries on the curvature tensor $R$ of $\nabla$.

\subsection{A curvature decomposition} $\\$

Since $\nabla$ is a $SO(3)$-connection, the curvature operator at any point $p\in M$ takes its values in
$\Lambda^2_3T_pM$, i.e. ${\cal R}\in Hom(\Lambda^2TM,\Lambda^2_3TM)$.

The irreducible components of $Hom(\Lambda^2,\Lambda^2_3)$, where $\Lambda^2=\Lambda^2{\Bbb R}^5$,
under the action of $SO(3)$ have  been described in \cite[Proposition 5.8]{BN}. To state the result
of \cite{BN} we introduced the  following notation. Given a map ${\cal K}\in
Hom(\Lambda^2,\Lambda^2)$, we denote:

 - the corresponding $4$-tensor by $K$, i.e.
$$K(X,Y,Z,U)=g({\cal K}(X\wedge Y),Z\wedge U), \quad X,Y,Z,U\in {\Bbb R}^5,$$
where $g$ stands for the metric on $\Lambda^2$ induced by the standard metric of ${\Bbb R}^5$;

 - the Ricci tensor of $K$ -- by $\rho_K$, $$\rho_K(X,Y)=Trace\{(Z,U)\to K(X,Z,Y,U)\};$$

 - the symmetric and skew-symmetric parts of $\rho_K$ -- by $\rho_K^{+}$ and $\rho_K^{-}$,
 $$\rho_K^{\pm}(X,Y)=\frac{1}{2}(\rho_K(X,Y)\pm \rho_K(Y,X));$$

 - the scalar curvature of $K$ -- by $s_K$, $s_K=Trace\,\rho_K$;

 - the anti-symmetrization of $K$ -- by $A_K$.

 Then we have the following.

\begin{prop} \rm{(\cite{BN})}\label{irred} The map

$$
{\cal K}\to (A_K,\rho_K^{-},s_K g,\rho_K^{+}-\frac{1}{5}s_K g)
$$
is an equivariant isomorphism
$$
\Psi:Hom(\Lambda^2,\Lambda^2_3)\to
\Lambda^4\oplus(\Lambda^2_3\oplus\Lambda^2_7)\oplus\odot^2_1\oplus(\odot^2_5\oplus\odot^2_9).
$$
\end{prop}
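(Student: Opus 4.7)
The plan is to recognize $\Psi$ as an isomorphism of $\imath(SO(3))$-modules via Schur's lemma, after matching the isotypic decompositions of source and target. First, each of the four component maps ${\cal K}\mapsto A_K,\rho_K^-,s_K g,\rho_K^+-\frac{1}{5}s_K g$ is built from antisymmetrization, trace, and multiplication by the metric, all of which are $O(5)$-equivariant (hence $\imath(SO(3))$-equivariant) natural operations. The image of $\Psi$ lies in the claimed target: $A_K\in\Lambda^4$ by construction, $\rho_K^-\in\Lambda^2{\Bbb R}^5=\Lambda^2_3\oplus\Lambda^2_7$ as a skew $2$-tensor, $s_K g\in\odot^2_1$, and $\rho_K^+-\frac{1}{5}s_K g$ is symmetric and trace-free, hence in $\odot^2_5\oplus\odot^2_9$. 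A direct count gives dimension $30=5+(3+7)+1+(5+9)$ on both sides.

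Next I would decompose everything into $\imath(SO(3))$-irreducibles. Writing $V_l$ for the irreducible of dimension $2l+1$, we have ${\Bbb R}^5=V_2$, so $\Lambda^4\cong V_2$ via Hodge duality. Using (\ref{ort}): $\Lambda^2_3=V_1$, $\Lambda^2_7=V_3$, $\odot^2_1=V_0$, $\odot^2_5=V_2$, $\odot^2_9=V_4$. Thus the target splits as $V_0\oplus V_1\oplus 2V_2\oplus V_3\oplus V_4$. For the source, $Hom(\Lambda^2,\Lambda^2_3)\cong \Lambda^2\otimes\Lambda^2_3\cong(V_1\otimes V_1)\oplus(V_3\otimes V_1)$, and the Clebsch--Gordan rule gives $V_1\otimes V_1=V_0\oplus V_1\oplus V_2$ and $V_3\otimes V_1=V_2\oplus V_3\oplus V_4$. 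So source and target have the same isotypic decomposition.

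By Schur's lemma it then suffices to check that on each isotypic summand of the source the corresponding component of $\Psi$ is non-zero. For the irreducibles of multiplicity one ($V_0,V_1,V_3,V_4$) this reduces to a single witness calculation: one exhibits a model ${\cal K}$ (built from the basis (\ref{kappa})) for which the relevant component of $\Psi({\cal K})$ is non-zero. The main obstacle is the doubled $V_2$: both $\Lambda^4$ (hit through $A_K$) and $\odot^2_5$ (hit through $\rho_K^+-\frac{1}{5}s_K g$) are copies of $V_2$ in the target, so one must show the $2V_2$-part of the source is carried isomorphically onto $\Lambda^4\oplus\odot^2_5$ rather than collapsed onto a single copy. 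I would handle this by exhibiting two explicit elements ${\cal K}_1,{\cal K}_2$ --- naturally constructed from the orthogonal projection $\Lambda^2\to\Lambda^2_3$ and from a Clebsch--Gordan type map $\Lambda^2_7\to\Lambda^2_3$ arising via contraction with $\Upsilon$ --- and checking that the pairs $(A_{K_i},\rho_{K_i}^+-\frac{1}{5}s_{K_i}g)$, $i=1,2$, are linearly independent in $\Lambda^4\oplus\odot^2_5$. Once non-triviality is established on each isotypic summand, Schur's lemma completes the proof.
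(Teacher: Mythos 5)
Your argument is sound in outline, but it takes a genuinely different route from the paper, which in fact offers no proof of this proposition at all: it is imported from \cite[Proposition 5.8]{BN}, and the closest the text comes to proving it is the block of Lemmas~\ref{nu}, \ref{skew}, \ref{symm} and \ref{P} that follows, where an explicit preimage under $\Psi$ is exhibited for each summand of the target (${\cal K}_{\nu}$ for $\Lambda^4$, ${\cal K}^{-}_{\eta}$ for $\Lambda^2$, ${\cal K}^{+}_{\eta}$ for $\odot^2_5\oplus\odot^2_9$, and $\frac{5}{6}{\cal P}$ for $\odot^2_1$); combined with the dimension count $30=5+10+1+14$, this bottom-up construction gives surjectivity and hence bijectivity. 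Your top-down argument --- match the isotypic decompositions via $\Lambda^2\otimes\Lambda^2_3=(V_1\oplus V_3)\otimes V_1=V_0\oplus V_1\oplus 2V_2\oplus V_3\oplus V_4$ and Clebsch--Gordan, then invoke Schur --- is correct as far as it goes and explains \emph{why} the two sides can be isomorphic, which the explicit construction does not; on the other hand the paper's route produces the formulas for $\Psi^{-1}$ that are actually consumed later (they are used repeatedly in the proof of Theorem~\ref{normal} to evaluate curvature components), so the extra computation is not wasted. The one step you should tighten is the multiplicity-two block: since $\mathrm{End}_{SO(3)}(V_2)\cong{\Bbb R}$, the restriction of $\Psi$ to the $V_2$-isotypic component is governed by a $2\times 2$ real matrix, and mere linear independence of the two image vectors $(A_{K_i},\rho^{+}_{K_i}-\frac{1}{5}s_{K_i}g)$ is \emph{not} sufficient to conclude that this matrix is nonsingular --- if the kernel were a single copy of $V_2$, the image would still be a $5$-dimensional irreducible subspace containing many linearly independent pairs of vectors. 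You need either to check that the two images do not lie in a common irreducible summand, or (cleaner, and what the paper's lemmas effectively do) to verify that suitable witnesses map to $(\nu,0)$ and $(0,\eta)$ respectively, so that the $2\times2$ matrix is diagonal and invertible. Note also that your proposed witness ``built from the orthogonal projection $\Lambda^2\to\Lambda^2_3$'' is an invariant element and therefore lives in the $V_0$-summand, not in the $V_2$-isotypic part; the correct witness for the $\Lambda^4$-copy of $V_2$ is the map ${\cal K}_{\nu}=\frac{10}{3}{\cal P}\circ{\cal N}$ of Lemma~\ref{nu}.
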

Note that the summands on the right-hand side are irreducible under the $SO(3)$-action. We shall give
a description of their images in $Hom(\Lambda^2,\Lambda^2_3)$ under the inverse isomorphism
$\Psi^{-1}$.

  Denote by ${\cal P}: \Lambda^2=\Lambda^2_3\oplus\Lambda^2_7\to \Lambda^2_3$ the
orthogonal projection onto $\Lambda^2_3$. Given a form $\nu\in\Lambda^4$, let ${\cal
N}:\Lambda^2\to\Lambda^2$ be the associated symmetric linear map defined by $g({\cal N}(X\wedge
Y),Z\wedge U)=\nu(X\wedge Y\wedge Z\wedge U)$. Set
$$
{\cal K}_{\nu}=\frac{10}{3}{\cal P}\circ {\cal N}.
$$

\begin{lemma}\label{nu}
$\Psi({\cal K}_{\nu})=\nu$.
\end{lemma}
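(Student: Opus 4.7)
My proof plan is to combine a representation-theoretic reduction with a direct computation on a single test element.

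The first observation is that the assignment $\nu\mapsto\mathcal{K}_\nu$ is manifestly $SO(3)$-equivariant: the endomorphism $\mathcal{N}$ is built solely from $\nu$ and the $SO(3)$-invariant metric, and $\mathcal{P}$ is the orthogonal projection onto the $SO(3)$-invariant subspace $\Lambda^2_3$. Hence the composition $\nu\mapsto \Psi(\mathcal{K}_\nu)$ is an $SO(3)$-equivariant map from $\Lambda^4$ into the codomain of Proposition~\ref{irred}. Now $\Lambda^4\cong \Lambda^1=\mathbb{R}^5$ via the (equivariant) Hodge star, and $\mathbb{R}^5$ is by hypothesis the unique 5-dimensional irreducible $SO(3)$-representation. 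Schur's lemma then forces the components of $\Psi(\mathcal{K}_\nu)$ in $\Lambda^2_3$, $\Lambda^2_7$, $\odot^2_1$ and $\odot^2_9$ (of dimensions $3,7,1,9$) to vanish identically, since none of these is isomorphic to $\Lambda^4$.

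We are thus reduced to $\Psi(\mathcal{K}_\nu)=(c\,\nu,0,0,T(\nu))$, where $c\in\mathbb{R}$ is a universal constant and $T:\Lambda^4\to\odot^2_5$ is an $SO(3)$-equivariant map between two copies of the 5-dimensional irrep; by Schur, $T$ is either zero or an isomorphism. To finish the proof it suffices to compute $c$ and to rule out $T$ being an isomorphism by evaluating both quantities at a single convenient $\nu_0$. I would pick $\nu_0=a^1\wedge a^2\wedge a^3\wedge a^4$ for an adapted basis $(a_1,\dots,a_5)$ of $\mathbb{R}^5$, and compute
$$
K_{\nu_0}(X,Y,Z,U)=\tfrac{10}{3}\,g\bigl(\mathcal{N}(X\wedge Y),\mathcal{P}(Z\wedge U)\bigr)
$$
by expanding $\mathcal{P}$ through the orthogonal basis $\{\kappa_1(a),\kappa_2(a),\kappa_3(a)\}$ of $\Lambda^2_3$ from (\ref{kappa}), each having squared norm $5$. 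Evaluating the antisymmetrization $A_{K_{\nu_0}}$ on $(a_1,a_2,a_3,a_4)$ should give the value $\nu_0(a_1,a_2,a_3,a_4)=1$, thus pinning down $c=1$; and computing the symmetric trace-free part of $\rho_{K_{\nu_0}}$ at the same test element and observing it to vanish gives $T=0$.

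The hard part is the bookkeeping in the last step: one must carefully expand $\mathcal{P}(Z\wedge U)=\tfrac{1}{5}\sum_{i}g(Z\wedge U,\kappa_i)\,\kappa_i$ and track the contributions of all three $\kappa_i$'s through the cyclic antisymmetrization and the Ricci trace. The factor $\tfrac{10}{3}$ is exactly the ratio $\dim\Lambda^2/\dim\Lambda^2_3$, which is precisely what one expects to normalize out the dilution caused by the projection $\mathcal{P}$ and produce $c=1$. Once the Schur reduction has been carried out, the whole statement is thus reduced to a single finite verification at $\nu_0$, and no computation for a general $\nu$ is needed.
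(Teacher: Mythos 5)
Your proposal is correct, and it takes a genuinely different route from the paper. The paper argues directly: it shows $\rho_{K_{\nu}}=0$ for \emph{every} $\nu$ by a one-line computation exploiting the skew-symmetry of $\nu$ (so all Ricci-type components of $\Psi({\cal K}_{\nu})$ vanish at once), and then writes out the six-term formula for $A_{K_{\nu}}$ and checks $A_{K_{\nu}}=\nu$ on the standard basis of $\Lambda^4$. You instead invoke Schur's lemma after observing that $\Lambda^4\cong{\Bbb R}^5$ is the $5$-dimensional irreducible summand, which automatically kills the components in $\Lambda^2_3$, $\Lambda^2_7$, $\odot^2_1$ and $\odot^2_9$ and reduces everything to determining one scalar $c$ and one intertwiner $T:\Lambda^4\to\odot^2_5$; this is sound (the odd-dimensional irreducibles of $SO(3)$ are of real type, so $A_{K_{\nu}}=c\,\nu$ for a universal constant), and it shrinks the verification of $A_{K_{\nu}}=\nu$ from five basis elements to a single evaluation, which indeed yields $c=1$ with the normalization ${\cal K}_{\nu}=\frac{10}{3}{\cal P}\circ{\cal N}$. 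One imprecision worth fixing: to conclude $T=0$ you must verify that the full $\odot^2_5$-component of $\rho_{K_{\nu_0}}$ vanishes, and since $\rho_{K_{\nu_0}}$ is a bilinear form it cannot be evaluated ``at the same test element'' $(a_1,a_2,a_3,a_4)$ --- you need its values on enough pairs $(a_i,a_j)$ to pin down five independent functionals on $\odot^2_5$. That check is still finite and would succeed, but here the paper's direct observation that $\rho_{K_{\nu}}\equiv 0$ by skew-symmetry of $\nu$ is both shorter and stronger, so a hybrid (your Schur reduction for the $\Lambda^4$-component, the paper's skew-symmetry argument for the Ricci part) would be the most economical version.
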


\begin{proof}
Let $\{E_1,...,E_5\}$ be an adapted basis of ${\Bbb R}^5$ and let $\{\kappa_1,\kappa_2,\kappa_3\}$
be the corresponding basis of $\Lambda^2_3$ defined via (\ref{kappa}).

Taking into account the skew-symmetry of $\nu$, one can easily see by a straightforward computation
that
$$
\rho_{ K_{\nu}}(X,Z)=\frac{2}{3}\sum_{\alpha=1}^5\sum_{i=1}^3\nu(X\wedge
E_{\alpha}\wedge\kappa_i)g(\kappa_i,Z\wedge E_{\alpha})=0.
$$
For the anti-symmetrization of $K_{\nu}$, we have
$$
\begin{array}{c}
A_{K_{\nu}}(X,Y,Z,U)=\\[6pt]\displaystyle{\frac{1}{9}}\sum_{\alpha=1}^5\sum_{i=1}^3 [\nu(X\wedge Y\wedge\kappa_i)g(\kappa_i,Z\wedge U)
+ \nu(Z\wedge U\wedge\kappa_i)g(\kappa_i,X\wedge Y)\\[8pt]
+\nu(Y\wedge Z\wedge\kappa_i)g(\kappa_i,X\wedge U)+\nu(X\wedge U\wedge\kappa_i)g(\kappa_i,Y\wedge Z) \\[8pt]
+\nu(Z\wedge X\wedge\kappa_i)g(\kappa_i,Y\wedge U) + \nu(Y\wedge U\wedge\kappa_i)g(\kappa_i,Z\wedge
X)].
\end{array}
$$
It is easy to check by means of this formula that $(AK_{\nu})=\nu$ on the standard basis of
$\Lambda^4$ yielded by $\{E_1,...,E_5\}$. Thus $(AK_{\nu})=\nu$ on $\Lambda^4$ and the lemma is
proved.
\end{proof}

Now, let $\eta\in\otimes^2{\Bbb R}^5$ and denote again by $\eta$ the endomorphism of ${\Bbb R}^5$
determined by the form $\eta$ via $g(\eta(X),Y)=\eta(X,Y)$. The form $\eta $ is the Ricci tensor of
the operator $\Lambda^2\to\Lambda^2$ defined by $X\wedge Y\to \eta(X)\wedge Y+X\wedge\eta(Y)$.
Since we need an operator with values in $\Lambda^2_3$, we consider ${\cal P}(\eta(X)\wedge
Y+X\wedge\eta(Y))$. It is not hard to check that  the anti-symmetrization of the corresponding
$4$-tensor vanishes on the standard basis of $\Lambda^4$ yielded by an adapted basis of ${\Bbb
R}^5$. To find the Ricci tensor, we note first that a direct computation gives
$$
\sum_{i=1}^3g(\kappa_iX,\kappa_iY)=\sum_{i=1}^3\sum_{\alpha=1}^5g(X,\kappa_iE_{\alpha})g(Y,\kappa_iE_{\alpha})=6g(X,Y).
$$

Then
$$
\begin{array}{c}
\sum_{\alpha=1}^5\sum_{i=1}^3g( \eta(X)\wedge E_{\alpha},\kappa_i)g(\kappa_i,Z\wedge
E_{\alpha})=\sum_{i=1}^3g(\kappa_i(\eta(X)),\kappa_i(Z))\\[6pt]
=6g(\eta(X),Z)=6\eta(X,Z).
\end{array}
$$
Moreover, in the cases when $\eta$ is ether symmetric or skew-symmetric, we have
$$
\begin{array}{l}
\sum_{\alpha=1}^5\sum_{i=1}^3g(X\wedge\eta(E_{\alpha}),\kappa_i)g(\kappa_i,Z\wedge
E_{\alpha})\\[6pt]
=\sum_{\alpha=1}^5\sum_{i=1}^3g(\kappa_iX,\eta(E_{\alpha})g(\kappa_iZ,E_{\alpha})=\pm\sum_{i=1}^3\eta(\kappa_iX,\kappa_iZ)
\end{array}
$$
where the plus sign corresponds to the case when $\eta$ is symmetric and the minus sign-- to the
case of a skew-symmetric form $\eta$. This suggests to define a (symmetric, respectively,
skew-symmetric) form setting
\begin{equation}\label{eta-prim}
\eta'(X,Z)=Trace\{\Lambda^2_3\ni\sigma\to \eta(\sigma X,\sigma Z)\}.
\end{equation}

Set
$$
{\cal K}^{-}_{\eta}(X\wedge Y)=\frac{5}{6}{\cal P}(\eta(X)\wedge Y+X\wedge\eta(Y)+\eta'(X)\wedge
Y+X\wedge\eta'(Y)).
$$
\begin{lemma}\label{skew}
If $\eta\in \Lambda^2$, then  $\Psi({\cal K}^{-}_{\eta})=\eta$.
\end{lemma}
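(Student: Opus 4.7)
The plan is to verify that each of the four components of $\Psi(\mathcal{K}^{-}_{\eta})$ under the isomorphism of Proposition~\ref{irred} matches the corresponding component of $\eta\in\Lambda^2=\Lambda^2_3\oplus\Lambda^2_7$, which sits naturally in the middle summand. Concretely I need $A_{K^{-}_{\eta}}=0$, $\rho^{-}_{K^{-}_{\eta}}=\eta$, $s_{K^{-}_{\eta}}=0$, and $\rho^{+}_{K^{-}_{\eta}}=0$.

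The centrepiece is the computation of $\rho_{K^{-}_{\eta}}$. Starting from $\rho_{K^{-}_{\eta}}(X,Z)=\sum_{\alpha}g(\mathcal{K}^{-}_{\eta}(X\wedge E_{\alpha}),Z\wedge E_{\alpha})$, I insert the definition of $\mathcal{K}^{-}_{\eta}$, use the self-adjointness of $\mathcal{P}$ and the expansion $\mathcal{P}(Z\wedge E_{\alpha})=\sum_{i}\frac{1}{5}g(\kappa_{i},Z\wedge E_{\alpha})\kappa_{i}$. The four resulting summands are handled by the two precomputations stated just before the lemma, the second applied once with $\eta$ and once with $\eta'$ (both being skew-symmetric). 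Collecting terms yields
$$\rho_{K^{-}_{\eta}}=\eta+\tfrac{5}{6}\,\eta'-\tfrac{1}{6}\,\eta'',$$
so the lemma reduces to the sub-identity $\eta''=5\eta'$.

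To establish this sub-identity, observe that $\eta\mapsto\eta'$ is an $SO(3)$-equivariant endomorphism of $\Lambda^2=\Lambda^2_3\oplus\Lambda^2_7$ and is therefore scalar on each irreducible summand by Schur's lemma; write the eigenvalues as $\lambda_3$ and $\lambda_7$. Proving $\lambda_3=5$ and $\lambda_7=0$ implies that $\eta'\in\Lambda^2_3$, and applying the operator to $\eta'$ once more then gives $\eta''=5\eta'$. To find $\lambda_3$, take $\eta=\kappa_3$ and compute $\eta'(E_2,E_4)=\sum_i\kappa_3(\kappa_iE_2,\kappa_iE_4)$ from the explicit action of $\kappa_1,\kappa_2,\kappa_3$ on ${\Bbb R}^5$; the three summands are $1,1,8$ and give $10=5\,\kappa_3(E_2,E_4)$. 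For $\lambda_7$, note that $E_1\wedge E_2$ is orthogonal to each $\kappa_i$ and hence lies in $\Lambda^2_7$, while the analogous sum vanishes by a direct check. A more conceptual route uses $S_{\eta'}=-\sum_jS_{\kappa_j}S_{\eta}S_{\kappa_j}$ together with the Casimir identity $\sum_jS_{\kappa_j}^2=-6\,\mathrm{Id}$ on ${\Bbb R}^5$ and the adjoint Casimir of $SO(3)$ on $so(5)$, whose eigenvalues on $\Lambda^2_3$ and $\Lambda^2_7$ are $-2$ and $-12$; the arithmetic then produces the same $\lambda_3$ and $\lambda_7$.

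The remaining items are quick. Once $\rho_{K^{-}_{\eta}}=\eta$ is known, the skew-symmetry of $\eta$ immediately gives $\rho^{+}_{K^{-}_{\eta}}=0$ and $s_{K^{-}_{\eta}}=0$. For $A_{K^{-}_{\eta}}=0$, expand $K^{-}_{\eta}(X,Y,Z,U)$ via $\mathcal{P}=\sum_i\tfrac{1}{5}\kappa_i\,g(\kappa_i,\,\cdot\,)$ and alternate over the four arguments exactly as in the proof of Lemma~\ref{nu}; the skew-symmetry of $\eta$ and of $\eta'$ makes the resulting terms cancel in pairs. The principal obstacle is the sub-identity $\eta''=5\eta'$, specifically pinning down the eigenvalues $\lambda_3=5$ and $\lambda_7=0$, which is precisely where the normalisation $\tfrac{5}{6}$ and the inclusion of the $\eta'$-terms in the definition of $\mathcal{K}^{-}_{\eta}$ find their justification.
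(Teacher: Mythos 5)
Your argument is correct, and its skeleton coincides with the paper's: both reduce the lemma to the fact that $\eta\mapsto\eta'$ sends a skew form to five times its $\Lambda^2_3$-component, after which $\eta''=5\eta'$, the Ricci tensor $\rho_{K^{-}_{\eta}}=\eta+\tfrac{5}{6}\eta'-\tfrac{1}{6}\eta''$ collapses to $\eta$, and $A_{K^{-}_{\eta}}=0$, $\rho^{+}=0$, $s=0$ follow from a basis check and skew-symmetry. Where you genuinely diverge is in how that key fact is established. The paper verifies the pointwise identity $\sum_{i=1}^{3}\kappa_iX\wedge\kappa_iZ=\sum_{i=1}^{3}g(\kappa_iX,Z)\kappa_i=5{\cal P}(X\wedge Z)$ by direct computation, which gives $\eta'=5\eta\circ{\cal P}$ at once. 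You instead invoke Schur's lemma ($\eta\mapsto\eta'$ is $SO(3)$-equivariant, hence scalar on each of the non-isomorphic irreducibles $\Lambda^2_3$ and $\Lambda^2_7$) and pin down the eigenvalues $5$ and $0$ on the test elements $\kappa_3$ and $e_1\wedge e_2$ (your arithmetic $1+1+8=10=5\cdot 2$ checks out), or alternatively via $S_{\eta'}=-\sum_jS_{\kappa_j}S_{\eta}S_{\kappa_j}$, the identity $\sum_jS_{\kappa_j}^2=-6\,\mathrm{Id}$ and the adjoint Casimir eigenvalues $-2$ and $-12$. The paper's route is shorter and self-contained; yours is more conceptual, explains where the normalisation $\tfrac{5}{6}$ and the $\eta'$-terms come from, and the same Casimir computation with eigenvalues $-6$ and $-20$ on $\odot^2_5$ and $\odot^2_9$ delivers $\eta'+\eta''=12\eta+6(\mathrm{Trace}\,\eta)\,g$ for free, replacing the ``simple (but tedious) computation'' in the proof of Lemma~\ref{symm}. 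One minor remark: the vanishing of the anti-symmetrization is asserted in the paper for the operator ${\cal P}(\eta(X)\wedge Y+X\wedge\eta(Y))$ with \emph{arbitrary} $\eta$, checked on the standard basis of $\Lambda^4$, so attributing the cancellation specifically to skew-symmetry is unnecessary, though your verification goes through.
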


\begin{proof}
Since $\eta$ is skew-symmetric, $\eta'(X\wedge Z)=\eta(\sum_{i=1}^3 \kappa_iX\wedge\kappa_iZ)$ and
one can see by a direct computation that
$$
\sum_{i=1}^2\kappa_i X\wedge \kappa_i Z=\sum_{i=1}^3g(\kappa_i X,Z)\kappa_i=\sum_{i=1}^3g(X\wedge
Z,\kappa_i)\kappa_i=5{\cal P}(X\wedge Z).
$$
Thus, $\eta'=5\eta\circ{\cal P}$. Then the anti-symmetrization of the $4$-tensor $K^{-}_{\eta}$
vanishes and its Ricci tensor is equal to $\eta$.
\end{proof}

Now set
$$
{\cal K}^{+}_{\eta}(X\wedge Y)=\frac{5}{18}{\cal P}(5\eta(X)\wedge Y+5X\wedge\eta(Y)-\eta'(X)\wedge
Y-X\wedge\eta'(Y))
$$
\begin{lemma}\label{symm}
In the notation (\ref{ort}), if $\eta\in\odot^2_5\oplus\odot^2_9$, then $\Psi({\cal
K}^{+}_{\eta})=\eta$.
\end{lemma}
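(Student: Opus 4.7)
The plan is to mirror the proof of Lemma~\ref{skew}, namely to compute the four components of $\Psi({\cal K}^+_\eta)$ under the isomorphism of Proposition~\ref{irred} and verify that they reduce to $(0,0,0,\eta)$. Since $\eta \in \odot^2_5 \oplus \odot^2_9$ is symmetric and $g$-trace-free (both summands being orthogonal to $\odot^2_1 = {\Bbb R}g$), and since the form $\eta'$ defined by (\ref{eta-prim}) is manifestly symmetric whenever $\eta$ is, the Ricci tensor of $K^+_\eta$ will be symmetric and its $\rho^-$-component vanishes automatically.

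The first substantive step is to compute $\rho_{K^+_\eta}$ by applying the two sum identities derived in the paragraph preceding (\ref{eta-prim}) to the four summands of ${\cal K}^+_\eta(X \wedge E_\alpha)$. Taking the $+$ sign in the second identity (the case $\eta$ symmetric) and setting $\eta''(X,Z) := \sum_i \eta'(\kappa_iX, \kappa_iZ)$, the coefficients $5,5,-1,-1$ combine to yield
\[
\rho_{K^+_\eta}(X,Z) = \frac{1}{18}\bigl(30\,\eta(X,Z) - \eta'(X,Z) - \eta''(X,Z)\bigr).
\]
Thus the required identity $\rho_{K^+_\eta} = \eta$ reduces to the purely algebraic assertion $\eta' + \eta'' = 12\,\eta$ on each of $\odot^2_5$ and $\odot^2_9$.

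The crux is this algebraic identity, and my plan is to deduce it via Schur's lemma. The operator $T: \eta \mapsto \eta'$ is $SO(3)$-equivariant, since the definition (\ref{eta-prim}) is intrinsic to $\Lambda^2_3$ as an $SO(3)$-module and independent of the chosen basis $\kappa_1,\kappa_2,\kappa_3$; hence $T$ acts on each of the irreducibles $\odot^2_5$ and $\odot^2_9$ as multiplication by a single scalar. To identify these scalars I would introduce, for $\omega \in \Lambda^2_3$, the derivation $({\cal L}_\omega \eta)(X,Y) = -\eta(\omega X, Y) - \eta(X, \omega Y)$, and use the contraction $\sum_i \kappa_i^2 = -6\cdot\mathrm{Id}$ on ${\Bbb R}^5$ (immediate from $\sum_i g(\kappa_iX, \kappa_iY) = 6\,g(X,Y)$, which is already recorded). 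A routine manipulation then gives
\[
\sum_{i=1}^3 {\cal L}_{\kappa_i}^2 \eta = -12\,\eta + 2\,\eta',
\]
whose left-hand side is minus the Casimir of the $SO(3)$-representation on $\otimes^2{\Bbb R}^5$ and acts as $-l(l+1)\cdot\mathrm{Id}$ on the spin-$l$ isotypic piece. Matching the dimensions $5$ and $9$ with those of the spin-$2$ and spin-$4$ irreducibles, one obtains $\eta' = 3\,\eta$ on $\odot^2_5$ and $\eta' = -4\,\eta$ on $\odot^2_9$; both scalars satisfy $c^2 + c = 12$, so $\eta' + \eta'' = 12\,\eta$ as required.

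It remains to check the two remaining components of $\Psi({\cal K}^+_\eta)$. The vanishing of the antisymmetrisation $A_{K^+_\eta}$ follows from a six-term computation on the standard basis of $\Lambda^4$ parallel to that in Lemma~\ref{nu}, the symmetry of $\eta$ and $\eta'$ here playing the role that antisymmetry played in Lemma~\ref{skew} so that the summands cancel in pairs. The scalar curvature $s_{K^+_\eta} = \mathrm{trace}(\rho_{K^+_\eta}) = \mathrm{trace}(\eta)$ vanishes because $\eta \perp g$ in $\odot^2_5 \oplus \odot^2_9$. The main obstacle is the scalar identification in the third paragraph above; once the Casimir eigenvalues are in hand the remainder is routine and parallels the preceding two lemmas.
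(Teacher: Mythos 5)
Your proposal is correct, and it reaches the same reduction as the paper — namely that, after computing $\rho_{K^{+}_{\eta}}=\frac{1}{18}(30\eta-\eta'-\eta'')$ from the two contraction identities preceding (\ref{eta-prim}), everything hinges on the algebraic identity $\eta'+\eta''=12\eta$ on $\odot^2_5\oplus\odot^2_9$ — but you establish that identity by a genuinely different route. The paper simply asserts that ``a simple (but tedious) computation'' gives $\eta'+\eta''=12\eta+6\,\mathrm{Trace}\,\eta\cdot g$ for every symmetric $\eta$ and then drops the trace term. You instead observe that $\eta\mapsto\eta'$ is $SO(3)$-equivariant, hence scalar on each irreducible summand by Schur, and you pin down the scalars via the Casimir: since $[S_{\kappa_1},S_{\kappa_2}]=-S_{\kappa_3}$ (cyclically) and $\sum_i\kappa_i^2=-6\,\mathrm{Id}$ on ${\Bbb R}^5$ (consistent with $-l(l+1)$ for $l=2$), the identity $\sum_i{\cal L}_{\kappa_i}^2\eta=-12\eta+2\eta'$ forces $\eta'=3\eta$ on $\odot^2_5$ (spin $2$) and $\eta'=-4\eta$ on $\odot^2_9$ (spin $4$); both eigenvalues satisfy $c^2+c=12$, which is exactly what is needed. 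I verified the coefficient bookkeeping ($5\cdot 6\eta+5\eta'-6\eta'-\eta''$) and the Casimir normalization, and both are right. What your approach buys is the elimination of the unstated tedious computation and, as a bonus, the individual eigenvalues of $\eta\mapsto\eta'$ on each isotypic piece (including $\eta'=6\eta$ on $\odot^2_1$, recovering the recorded identity $\sum_ig(\kappa_iX,\kappa_iY)=6g(X,Y)$); what the paper's direct computation buys is the more general closed formula valid for arbitrary symmetric $\eta$, trace included. Your treatment of the remaining components ($A_{K^{+}_{\eta}}=0$, $\rho^{-}=0$, $s=0$) matches the paper's, which likewise only asserts the vanishing of the antisymmetrization as ``not hard to check''; if you want that step airtight you should still carry out the six-term check on the basis of $\Lambda^4$, since ``cancellation in pairs'' is a claim you have described rather than demonstrated.
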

\begin{proof}
We set $\eta''=(\eta')'$ for any symmetric form $\eta$. Then a simple (but tedious) computation
shows that
$$
\eta'(X,Z)+\eta''(X,Z)=12\eta(X,Y)+6Trace\,\eta . g(X,Z).
$$
Noting that the last term vanishes when $\eta$ is orthogonal to $g$, we get the lemma.
\end{proof}

\smallskip

The preceding considerations give also the following.
\begin{lemma}\label{P}
$\Psi(\displaystyle{\frac{5}{6}}{\cal P})=g$.
\end{lemma}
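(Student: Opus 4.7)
The plan is to combine an $SO(3)$-invariance argument with a single Ricci computation. Since $\Lambda^2_3$ is an $SO(3)$-invariant subspace of $\Lambda^2$, the orthogonal projection ${\cal P}$ is $SO(3)$-equivariant, hence an $SO(3)$-fixed element of $Hom(\Lambda^2,\Lambda^2_3)$. Under the isomorphism $\Psi$ of Proposition~\ref{irred}, the four summands other than $\odot^2_1$ are nontrivial irreducible $SO(3)$-representations and so contain no nonzero fixed vectors. Consequently $\Psi(\frac{5}{6}{\cal P})$ must be concentrated in $\odot^2_1$: that is, $A_K = 0$, $\rho_K^{-} = 0$, and $\rho_K^{+}$ is proportional to $g$, so that its trace-free symmetric part in $\odot^2_5\oplus\odot^2_9$ vanishes. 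Here $K$ denotes the $4$-tensor associated to ${\cal K} = \frac{5}{6}{\cal P}$.

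It remains to pin down the scalar multiple. For this I compute $\rho_K$ directly. Unfolding ${\cal P}(X \wedge E_\alpha) = \frac{1}{5}\sum_i g(X\wedge E_\alpha,\kappa_i)\kappa_i$ and using the pairing $g(\kappa_i, X\wedge Y) = g(\kappa_i X, Y)$, the sum over $\alpha$ collapses via orthonormality of $\{E_\alpha\}$ to $\rho_K(X,Y) = \frac{1}{6}\sum_{i=1}^3 g(\kappa_i X,\kappa_i Y)$. Applying the identity $\sum_{i=1}^3 g(\kappa_i X,\kappa_i Y) = 6 g(X,Y)$ already derived in the discussion preceding Lemma~\ref{skew}, I obtain $\rho_K = g$, which identifies the $\odot^2_1$-component of $\Psi(\frac{5}{6}{\cal P})$ with $g$ on the nose.

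The specific factor $\frac{5}{6}$ in the statement is exactly what is needed to cancel the normalisation $|\kappa_i|^2 = 5$ of the basis of $\Lambda^2_3$ against the factor $6$ produced by $\sum_i g(\kappa_i X,\kappa_i Y) = 6 g(X,Y)$, so that the Ricci tensor comes out as $g$ rather than a scalar multiple of it. No step presents a real obstacle: the invariance argument is immediate from Proposition~\ref{irred} and the Ricci calculation is a one-line expansion using identities already established in the proofs of Lemmas~\ref{nu}--\ref{symm}. An alternative, purely computational route would be to verify $A_K = 0$ by hand using the cyclic sum formula that appears in the proof of Lemma~\ref{nu}, but the representation-theoretic argument is considerably cleaner and requires nothing beyond irreducibility of the non-$\odot^2_1$ summands.
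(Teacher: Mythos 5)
Your proof is correct, and its computational core coincides with what the paper intends: the paper's own ``proof'' of Lemma~\ref{P} is just the sentence ``the preceding considerations give also the following,'' and the preceding consideration it points to is exactly the trace identity $\sum_{i=1}^3 g(\kappa_i X,\kappa_i Y)=6g(X,Y)$ that you invoke, which gives $\rho_K=g$ for ${\cal K}=\frac{5}{6}{\cal P}$ precisely as in your one-line expansion. Where you genuinely depart from the paper is in handling the remaining components: the paper would check $A_K=0$, $\rho_K^{-}=0$ and the vanishing of the trace-free part of $\rho_K^{+}$ by the same kind of direct basis computations used for Lemmas~\ref{nu}--\ref{symm}, whereas you obtain all of this at once from Schur's lemma, using that ${\cal P}$ is an $SO(3)$-fixed element of $Hom(\Lambda^2,\Lambda^2_3)$ and that $\odot^2_1$ is the only trivial summand in the target of $\Psi$. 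That is cleaner, shorter, and perfectly rigorous. One caveat, which is a normalisation ambiguity in the paper rather than a flaw in your argument: read literally, the third slot of $\Psi$ in Proposition~\ref{irred} is $s_Kg$, and your computation $\rho_K=g$ gives $s_K=5$, hence a $\odot^2_1$-entry of $5g$; the statement $\Psi(\frac{5}{6}{\cal P})=g$ presupposes identifying the $\odot^2_1$-component with the trace part $\frac{1}{5}s_Kg$ of $\rho_K^{+}$ (equivalently, in this case, with $\rho_K$ itself), which is the reading your final paragraph tacitly adopts and the only one consistent with the lemma as stated. There is no gap in your proof.
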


\subsection{Conditions for normality of $(\Phi^{(1)}_{+},\chi,h_t)$} $\\$

Now, in order to give necessary and sufficient conditions for normality of the almost contact metric
structure $(\Phi^{(1)}_{+},\chi,h_t)$, we define a tensor on $M$ as follows. The definition
$\xi_{\sigma}=\frac{1}{4}\ast(\sigma\wedge\sigma)$ makes sense for every $\sigma\in\Lambda^2_3TM$.
Note that we have $g(\xi_{\sigma},\xi_{\sigma})=|\sigma|^4$. We extend also the definition of the
operator $\varphi^{\sigma}_{+}$ setting
$\varphi^{\sigma}_{+}X=\frac{1}{3}\imath_{X}(\sigma|\sigma|^2+\imath_{\xi_{\sigma}}(\ast\sigma))$
for $\sigma\in\Lambda^2_3TM$ and $X\in T_{\pi(\sigma)}M$. As above, let $A_R$ be the
anti-symmetrization of the curvature $4$-tensor $R$ and $\rho^{+}_R$ the symmetric part of the
Ricci tensor. Set
$$
\eta=\rho_R^{+}-\displaystyle{\frac{1}{5}}s_R g
$$
and
\begin{equation}\label{tenQ}
\begin{array}{c}
Q(\sigma,X)=12[A_R(\xi_{\sigma}\wedge\varphi^{\sigma}_{+}X\wedge\sigma)
-A_R(\xi_{\sigma}\wedge\imath_X\sigma\wedge\sigma)|\sigma|^2]\\[6pt]
+5[g(X,\xi_{\sigma})\eta(\xi_{\sigma},\xi_{\sigma})-
\eta(X,\xi_{\sigma})g(\xi_{\sigma},\xi_{\sigma})-\eta(\varphi^{\sigma}_{+}(\imath_X\sigma),\xi_{\sigma})].
\end{array}
\end{equation}

Let $(\kappa_1,\kappa_2,\kappa_3)$ be the frame of $\Lambda^2_3TM$ defined by means of an adapted
frame $(a_1,...,a_5)$ of $TM$ via formulas (\ref{kappa}). Then, for
$\sigma=y_1\kappa_1+y_2\kappa_2+y_3\kappa_3$, the coefficients of $\xi_{\sigma}$ in the basis
$(a_1,...,a_5)$ are homogeneous polynomials of degree $2$ in $y_1,y_2,y_3$ while the coefficients
of $\varphi^{\sigma}_{+}X$, $X$-fixed, in the basis $a_i\wedge a_j$ are homogeneous polynomials in
$y_1,y_2,y_3$ of degree $3$. Thus $Q(\sigma,X)$ is a homogeneous polynomial of degree $6$ in
$\sigma$ and is linear in $X$. Therefore $Q(\sigma,X)$ determined a section of the bundle
$\odot^6\Lambda^2_3TM\otimes TM$. The latter is a subbundle of $\otimes^{13} TM$, so $Q$ can be
considered as a section of $\otimes^{13} TM$, i.e. as a tensor on $M$. This tensor will again be
denoted by $Q$.

We need the following.

\begin{lemma}\label{Q}
We have:
$$
\begin{array}{c}
Q(\sigma,\xi_{\sigma})=0,\\[6pt]
Q(\sigma,X)=0 \mbox { for every } X\in{\cal V}(\xi_{\sigma})=\{V(\xi_{\sigma}): V\in{\cal
V}_{\sigma}\}, \\[6pt]
Q(\sigma,X)=-6A_R(\xi_{\sigma}\wedge
\imath_X\sigma\wedge\sigma)|\sigma|+5\eta(X,\xi_{\sigma})g(\xi_{\sigma},\xi_{\sigma})~ \mbox {for
}X\in ({\Bbb R}\xi_{\sigma}\oplus {\cal V}(\xi_{\sigma}))^{\perp}.
\end{array}
$$
\end{lemma}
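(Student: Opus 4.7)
The plan is to exploit $SO(3)$-equivariance to reduce to a standard adapted basis at $p=\pi(\sigma)$ and then verify the three assertions by direct substitution in the definition of $Q$.

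First I would fix an adapted frame $(a_1,\ldots,a_5)$ of $T_pM$ in which $\sigma=(\kappa_3)_p$, so that by (\ref{kappa}) and $\xi_\sigma=\frac{1}{4}\ast(\sigma\wedge\sigma)$ one gets $|\sigma|^2=5$, $\xi_\sigma=-a_1$, $g(\xi_\sigma,\xi_\sigma)=1$, and the explicit expansion of $\iota_{\xi_\sigma}(\ast\sigma)$ in terms of $a_2\wedge a_4$ and $a_3\wedge a_5$. In this frame one reads off immediately $\iota_{\xi_\sigma}\sigma=0$, and using the extended formula for $\varphi^\sigma_+$ also $\varphi^\sigma_+\xi_\sigma=\frac{1}{3}\iota_{\xi_\sigma}\bigl(\sigma|\sigma|^2+\iota_{\xi_\sigma}(\ast\sigma)\bigr)=0$ since $\iota_{\xi_\sigma}$ applied twice vanishes. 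The decomposition in the statement becomes $T_pM=\mathbb{R}a_1\oplus\mathrm{span}(a_3,a_5)\oplus\mathrm{span}(a_2,a_4)$, since the vertical operators $\kappa_1,\kappa_2$ send $\xi_\sigma=-a_1$ to $-\sqrt 3\,a_5$ and $-\sqrt 3\,a_3$ respectively.

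Part (i) is then essentially free: substituting $X=\xi_\sigma$, the vanishings $\iota_{\xi_\sigma}\sigma=0$ and $\varphi^\sigma_+\xi_\sigma=0$ kill all three of the terms $A_R(\xi_\sigma\wedge\varphi^\sigma_+X\wedge\sigma)$, $A_R(\xi_\sigma\wedge\iota_X\sigma\wedge\sigma)|\sigma|^2$ and $\eta(\varphi^\sigma_+(\iota_X\sigma),\xi_\sigma)$, and the remaining two terms become $g(\xi_\sigma,\xi_\sigma)\eta(\xi_\sigma,\xi_\sigma)-\eta(\xi_\sigma,\xi_\sigma)g(\xi_\sigma,\xi_\sigma)=0$. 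For parts (ii) and (iii) I would plug in $X\in\{a_3,a_5\}$ and $X\in\{a_2,a_4\}$ respectively, compute $\iota_X\sigma$, $\varphi^\sigma_+X$ and $\varphi^\sigma_+(\iota_X\sigma)$ from (\ref{kappa}) and the extended formula for $\varphi^\sigma_+$, and then expand the 4-forms $\xi_\sigma\wedge\varphi^\sigma_+X\wedge\sigma$ and $\xi_\sigma\wedge\iota_X\sigma\wedge\sigma$. In both cases the two 4-forms are scalar multiples of a common element of $\Lambda^4T_pM$, so the two $A_R$-contributions combine into a single coefficient; in part (ii) the observation $g(X,\xi_\sigma)=0$ together with the parallel combination of $\eta(X,\xi_\sigma)$ and $\eta(\varphi^\sigma_+(\iota_X\sigma),\xi_\sigma)$ produces complete cancellation, while in part (iii) the $g(X,\xi_\sigma)=0$ holds again but the cancellation is only partial, leaving exactly the claimed formula.

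The only real obstacle is sign-bookkeeping in the wedge products $\xi_\sigma\wedge\cdot\wedge\sigma$ and in relating the coefficients produced by the $\sigma|\sigma|^2$ and $\iota_{\xi_\sigma}(\ast\sigma)$ pieces of $\varphi^\sigma_+$; once one has tabulated $\varphi^\sigma_+a_i$ and $\iota_{a_i}\sigma$ for $i=2,3,4,5$ in the chosen basis, each of the three cancellation patterns reduces to a one-line verification. No conceptual difficulty beyond part (i) arises, because both $A_R$ (a $4$-form) and $\eta$ (a symmetric $2$-tensor orthogonal to $g$) are treated as free data, and the identities are purely algebraic in the extended $\varphi^\sigma_+$.
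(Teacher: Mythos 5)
Your argument is correct and is essentially the paper's own proof: both reduce, via Lemma~\ref{ort-basis}, to an adapted basis with $\sigma=\kappa_3(a)$, where $\xi_{\sigma}=-a_1$, ${\cal V}(\xi_{\sigma})=\mathrm{span}\{a_3,a_5\}$, $({\Bbb R}\xi_{\sigma}\oplus{\cal V}(\xi_{\sigma}))^{\perp}=\mathrm{span}\{a_2,a_4\}$, and the whole computation collapses to the proportionalities $\imath_X\sigma=\varphi^{\sigma}_{+}X$ on ${\cal V}(\xi_{\sigma})$ and $\imath_X\sigma=2\varphi^{\sigma}_{+}X$ on the complement, exactly the cancellation pattern you describe. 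The one step the paper makes explicit that you leave implicit is the reduction to $|\sigma|^2=5$ using the homogeneity of $Q(\sigma,X)$ in $\sigma$, which is needed because $Q$ is defined for all $\sigma\in\Lambda^2_3TM$, not only for $\sigma\in{\Bbb T}$.
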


\begin{proof} The identity $Q(\sigma,\xi_{\sigma})=0$ is obvious
in view of the identities $\varphi^{\sigma}_{+}(\xi_{\sigma})=0$ and
$\imath_{\xi_{\sigma}}\sigma=0$. Since $Q(\sigma,X)$ is homogeneous in $\sigma$, to prove the remaining
identities of the lemma, we may assume that $|\sigma|^2=5$. Then, by Lemma~\ref{ort-basis}, we can
take an adapted basis $a=(a_1,...,a_5)$ of $T_{\pi(\sigma)}M$ such that $\sigma=\kappa_3(a)$.
Recall that in this case $\xi_{\sigma}=-a_1$, $H^{\sigma}=span\{a_2,a_4,a_3,a_5\}$,
$\varphi^{\sigma}_{+}a_2=a_4$, $\varphi^{\sigma}_{+}a_3=a_5$. Moreover, ${\cal
V}_{\sigma}=span\{\kappa_1,\kappa_2\}$, so ${\cal V}(\xi_{\sigma})=span\{a_3,a_5\}$ and $({\Bbb
R}\xi_{\sigma}\oplus {\cal V}(\xi_{\sigma}))^{\perp}=span\{a_2,a_4\}$.  Thus
$\imath_X\sigma=\varphi^{\sigma}_{+}X$ for $X\in{\cal V}(\xi_{\sigma})$,
$\imath_X\sigma=2\varphi^{\sigma}_{+}X$ for $X\in ({\Bbb R}\xi_{\sigma}\oplus {\cal
V}(\xi_{\sigma}))^{\perp}$, and the result follows.
\end{proof}

\begin{theorem} \label{normal} Suppose that the $SO(3)$-structure on $M$ is nearly integrable.
Then the almost contact structure $(\Phi^{(1)}_{+},\chi,h_t)$ is
normal if and only if $\ast T\in\Lambda^2_3TM$, the tensor $Q$ and
the $\odot^2_9$, and $\Lambda^2_7$-components of ${\cal R}$ vanish.
\end{theorem}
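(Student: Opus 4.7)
To prove the theorem I evaluate the Nijenhuis tensor $N^{(1)}_+$ on the three types of tangent-vector pairs separately. For two vertical vectors $V,W\in{\cal V}_\sigma$, the operator $\Phi^{(1)}_+$ restricts to the standard complex structure ${\cal J}$ of the $S^2$-fiber and Corollary~\ref{d-eta} gives $d\eta_t(V,W)=0$, so $N^{(1)}_+(V,W)$ reduces to the Nijenhuis tensor of ${\cal J}$ on an integrable fiber, which vanishes. For a horizontal-vertical pair, Proposition~\ref{NXV} applied with $n=1$ and the $+$ sign produces the coefficients $(-1)^{2}-1=0$ and $(-1)+1=0$, so $N^{(1)}_+(X^h_\sigma,V)=0$ automatically. (This accidental cancellation is precisely why the $(+,n=1)$-structure is the only one of the four with a chance to be normal.) Hence the whole content of the theorem lies in the horizontal-horizontal case, and by Proposition~\ref{NXY} the Nijenhuis there splits naturally into a horizontal component built from $T$ and a vertical component built from ${\cal R}$; each must vanish for every $\sigma\in{\Bbb T}$ and all $X,Y\in T_{\pi(\sigma)}M$.

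For the torsion part I would analyse
\[
T(X,Y)-T(\varphi^\sigma_+X,\varphi^\sigma_+Y)+\varphi^\sigma_+T(\varphi^\sigma_+X,Y)+\varphi^\sigma_+T(X,\varphi^\sigma_+Y)=0,
\]
using Lemma~\ref{ort-basis} to parametrise $\sigma$ by adapted frames and distinguishing the cases $X,Y\in H^\sigma$ (where the expression is a standard Nijenhuis-of-$J$ form with $J=\varphi^\sigma_+|_{H^\sigma}$) and $X=\xi_\sigma$ (where two of the four terms drop because $\varphi^\sigma_+\xi_\sigma=0$). Since $T$ is totally skew, it is determined by its Hodge dual $\ast T=(\ast T)_3+(\ast T)_7$ relative to $\Lambda^2=\Lambda^2_3\oplus\Lambda^2_7$. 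Expanding the displayed identity as $\sigma$ ranges over ${\Bbb T}$, I expect the $(\ast T)_7$-component to produce an obstruction that cannot be cancelled, while the $(\ast T)_3$-component produces only terms that cancel identically; this yields precisely $\ast T\in\Lambda^2_3TM$.

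For the curvature part, the identity to analyse is that the vertical vector
\[
-R(X,Y)\sigma+R(\varphi^\sigma_+X,\varphi^\sigma_+Y)\sigma-{\cal J}^{(1)}_{+}R(\varphi^\sigma_+X,Y)\sigma-{\cal J}^{(1)}_{+}R(X,\varphi^\sigma_+Y)\sigma
\]
vanishes identically. Pairing it with an arbitrary $\tau\in{\cal V}_\sigma$ and using (\ref{F}) converts the condition into a polynomial identity in $\sigma,X,Y$, which I would decompose via the irreducible splitting of $\mathrm{Hom}(\Lambda^2,\Lambda^2_3)$ from Proposition~\ref{irred} together with the explicit inverse maps of Lemmas~\ref{nu}, \ref{skew}, \ref{symm}, \ref{P}. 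The $\odot^2_9$- and $\Lambda^2_7$-components are expected to contribute terms that decouple from the rest and so must vanish on their own. The contributions of the $\Lambda^4$-component $A_R$, the $\odot^2_5$-part of $\eta=\rho^+_R-\frac{1}{5}s_Rg$, the $\Lambda^2_3$-part of $\rho^-_R$ and the scalar $s_R$ should recombine into exactly the tensor $Q$ of (\ref{tenQ}); Lemma~\ref{Q} ensures that $Q$ has automatic zeros along $\xi_\sigma$ and ${\cal V}(\xi_\sigma)$, so $Q=0$ is the genuine remaining condition.

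The main obstacle will be this curvature step: verifying that the contributions of $A_R$ and $\eta$ really amalgamate into $Q$, and that the $\odot^2_9$ and $\Lambda^2_7$ pieces decouple as claimed, requires careful linear-algebraic bookkeeping. Each irreducible summand is realised by a different operator in Lemmas~\ref{nu}, \ref{skew}, \ref{symm}, \ref{P}, and the cross-terms that arise after contraction with ${\cal J}^{(1)}_{+}=-\sigma\times(\cdot)$ interact non-trivially; one must also check that the $\odot^2_1$ (scalar) contribution and the $\Lambda^2_3$-part of $\rho^-_R$ are fully absorbed into the $A_R$-term of $Q$ and impose no new constraint beyond $Q=0$. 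The nearly-integrable hypothesis is used to single out the unique characteristic connection $\nabla$, whose totally skew torsion underlies Proposition~\ref{NXY} and hence the very splitting into torsion and curvature halves. Once the bookkeeping is carried out, the three conditions $\ast T\in\Lambda^2_3TM$, $Q=0$ and vanishing of the $\odot^2_9$- and $\Lambda^2_7$-components of ${\cal R}$ together prove both directions of the theorem.
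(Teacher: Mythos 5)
Your proposal follows essentially the same route as the paper: the same reduction via Corollary~\ref{d-eta} and Propositions~\ref{NXV}, \ref{NXY} to the horizontal--horizontal case, the same split into a torsion identity and a curvature identity, and the same use of Proposition~\ref{irred} with Lemmas~\ref{nu}--\ref{P} to extract the $\odot^2_9$- and $\Lambda^2_7$-conditions and the tensor $Q$. The only caveat is that the decisive computations you defer (evaluating the torsion identity at $\sigma=\kappa_1,\kappa_2,\kappa_3,\tfrac{1}{\sqrt2}(\kappa_1+\kappa_2)$ to span $\Lambda^2_7$, and treating the curvature identity separately for $X,Y\in H^{\sigma}$ versus $Y=\xi_{\sigma}$ to decouple $\odot^2_9$ from $\Lambda^2_7$ and $Q$) are precisely the content of the paper's argument, and they come out as you predict.
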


\begin{proof}
Let $V$, $W$ be vertical vectors at a point $\sigma\in{\Bbb T}$ and $X,Y\in T_{\pi(\sigma)}M$. The
restriction of $\Phi^{(1)}_{+}$ to the vertical bundle is a complex structure, so, in view of
Corollary~\ref{d-eta}, $N^{(1)}_{+}(V,W)=0$. We also have $N^{(1)}_{+}(X^h_{\sigma},V)=0$ by
Proposition~\ref{NXV}. Therefore, in view of Proposition~\ref{NXY}, the almost contact structure
$(\Phi^{(1)}_{+},\chi,h_t)$ is normal if and only if
\begin{equation}\label{TXY}
T(X,Y)-T(\varphi^{\sigma}_{+}X,\varphi^{\sigma}_{+}Y)\\[6pt]
+\varphi^{\sigma}_{+}T(\varphi^{\sigma}_{+}X,Y)+ \varphi^{\sigma}_{+}T(X,\varphi^{\sigma}_{+}Y)=0
\end{equation}
and
$$
R(X,Y)\sigma-R(\varphi^{\sigma}_{+}X,\varphi^{\sigma}_{+}Y)\sigma
+{\cal J}^{(1)}_{+}R(\varphi^{\sigma}_{+}X,Y)\sigma+{\cal
J}^{(1)}_{+}R(X,\varphi^{\sigma}_{+}Y)\sigma=0.
$$
In view of (\ref{F}), the latter identity is is equivalent to
\begin{equation}\label{RXY}
g({\cal R}(-X\wedge Y+\varphi^{\sigma}_{+}X\wedge \varphi^{\sigma}_{+}Y),\sigma\times\tau)+g({\cal R}(\varphi^{\sigma}_{+}X\wedge
Y+X\wedge\varphi^{\sigma}_{+}Y),\tau)=0
\end{equation}
for every $\sigma, \tau\in {\Bbb T}$ with $\pi(\sigma)=\pi(\tau)$, $\sigma\perp\tau$ and every
$X,Y\in T_{\pi(\sigma)}M$.

\smallskip

Identity (\ref{TXY}) is equivalent to  vanishing of the following tensor
$$
\begin{array}{c}
\widetilde T(X,Y,Z)=g(T(X,Y),Z)-g(T(\varphi^{\sigma}_{+}X,\varphi^{\sigma}_{+}Y),Z)\\[6pt]
-g(T(\varphi^{\sigma}_{+}X,Y),\varphi^{\sigma}_{+}Z)-g(T(X,\varphi^{\sigma}_{+}Y),\varphi^{\sigma}_{+}Z)
\end{array}
$$
Take an adapted basis $(a_1,...,a_5)$ of $T_pM$, $p=\pi(\sigma)$,
such that $\sigma=\kappa_3$, so $\xi_{\sigma}=-a_1$,
$\varphi^{\sigma}_{+}a_2=a_4$, $\varphi^{\sigma}_{+}a_3=a_5$. Since
the torsion is totally skew-symmetric, the tensor $\widetilde T$ is
also skew-symmetric, hence $\widetilde T=0$ exactly when $\widetilde
T(a_i,a_j,a_k)=0$ for $1\leq i<j<k\leq 5$. Direct computation shows
that these identities hold if and only if
$g(T(a_1,a_2),a_3)-g(T(a_1,a_4),a_5)=0$ and
$g(T(a_1,a_2),a_5)+g(T(a_1,a_4),a_2)=0$. The latter identities are
equivalent to
\begin{equation}\label{T}
g(T(J^{\sigma}_{+}X,J^{\sigma}_{+}Y),\xi_{\sigma})=g(T(X,Y),\xi_{\sigma})
\end{equation}
for every $X,Y\in H^{\sigma}$.

Suppose that the
torsion $T$ satisfies this condition. Take a point $p\in M$ and an adapted basis $\{a_1,...,a_5\}$
of $T_pM$. Define $\{\kappa_1,\kappa_2,\kappa_3\}$ by means of (\ref{kappa}). Set
$T_{ijk}=g(T(a_i,a_j),a_k)$, $i,j,k=1,...,5$. For $\sigma=\kappa_1$, we have
$\xi_{\sigma}=\frac{1}{2}a_1-\frac{\sqrt 3}{2}a_4$ by (\ref{xi}), hence
$\frac{2}{3}\sigma_{+}=(\frac{\sqrt 3}{2}a_1+\frac{1}{2}a_4)\wedge a_5+a_2\wedge a_3$ by
(\ref{sigma-pm}). Thus, in this case, $J^{\sigma}_{+}(\frac{\sqrt 3}{2}a_1+\frac{1}{2}a_4)=a_5$ and
$J^{\sigma}_{+}a_2=a_3$. Then identity (\ref{T}) with $X=\frac{\sqrt 3}{2}a_1+\frac{1}{2}a_4$, $Y=a_2$
and $X=\frac{\sqrt 3}{2}a_1+\frac{1}{2}a_4$, $Y=a_3$ gives
\begin{equation}\label{T1}
T_{135}+\sqrt{3}T_{345}=2T_{124},\quad T_{125}+\sqrt{3}T_{245}=-2T_{134}.
\end{equation}
If $\sigma=\kappa_2$, then $\xi_{\sigma}=\frac{1}{2}a_1+\frac{\sqrt 3}{2}a_4$  and
$\frac{2}{3}\sigma_{+}=(\frac{\sqrt 3}{2}a_1-\frac{1}{2}a_4)\wedge a_3+a_2\wedge a_5$. It
follows from (\ref{T}) that
\begin{equation}\label{T2}
T_{135}-\sqrt{3}T_{345}=2T_{124},\quad T_{123}+\sqrt{3}T_{234}=-2T_{145}.
\end{equation}
For $\sigma=\kappa_3$, we have $\xi_{\sigma}=-a_1$, $\frac{2}{3}\sigma_{+}=a_2\wedge a_4+a_3\wedge
a_5$ and by (\ref{T})
\begin{equation}\label{T3}
T_{145}=T_{123},\quad T_{134}=T_{125}.
\end{equation}
Moreover, for $\sigma=\frac{1}{\sqrt 2}(\kappa_1+\kappa_2)$, we have
$\xi_{\sigma}=\frac{1}{2}a_1-\frac{\sqrt 3}{2}a_2$, hence $\frac{2}{3}\sigma_{+}=(\frac{\sqrt
6}{4}a_1+\frac{\sqrt 2}{4}a_2+\frac{2\sqrt 2}{4}a_4)\wedge a_5+(\frac{\sqrt 6}{4}a_1+\frac{\sqrt
2}{4}a_2-\frac{2\sqrt 2}{4}a_4)\wedge a_3$. In this case identity (\ref{T}) gives
\begin{equation}\label{T4}
2T_{124}=T_{135}-\sqrt{3}T_{235}.
\end{equation}
The first identities of (\ref{T1}) and (\ref{T2}) imply
\begin{equation}\label{T5}
T_{345}=0,\quad 2T_{124}-T_{135}=0.
\end{equation}
This and (\ref{T4}) give
\begin{equation}\label{T6}
T_{235}=0.
\end{equation}
The second identities of (\ref{T1}) and (\ref{T2}), and identity (\ref{T3}) imply
\begin{equation}\label{T7}
\sqrt{3}T_{134}+T_{245}=0,\quad \sqrt{3}T_{145}+T_{234}=0.
\end{equation}
Thus, according to (\ref{T3}), (\ref{T5}) -- (\ref{T7}), the $2$-form $\ast T$ vanishes on the
following basis of  $(\Lambda^2_3 T_pM)^{\perp}$:~~$a_2\wedge a_3-a_4\wedge a_5$, $a_2\wedge
a_5-a_3\wedge a_4$, $a_1\wedge a_2$, $a_2\wedge a_4-2a_3\wedge a_5$, $a_1\wedge a_4$, $a_1\wedge
a_3-\sqrt{3}a_2\wedge a_5$,  $a_1\wedge a_5-\sqrt{3}a_2\wedge a_3$. Therefore $\ast T\in
\Lambda^2_3 T_pM$.

 Conversely, assume that the latter condition holds for every $p\in M$. Take $\sigma\in{\Bbb T}$
and choose and adapted basis $\{a_1,...,a_5\}$ of $T_{p}M$, $p=\pi(\sigma)$, such that
$\sigma=\kappa_3$. Then, at the point p, identity (\ref{T}) is equivalent to identities (\ref{T3}).
Obviously, these identities are satisfied  since $\ast T$ vanishes on $a_2\wedge a_3-a_4\wedge a_5$
and $a_2\wedge a_5-a_3\wedge a_4$.

\smallskip

Now we address the problem when identity (\ref{RXY}) holds for every $\sigma, \tau\in {\Bbb T}$ with $\pi(\sigma)=\pi(\tau)$, $\sigma\perp\tau$ and
every $X,Y\in H^{\sigma}$.

In the notation of Proposition~\ref{irred}, denote the component of ${\cal R}$
in $\Psi^{-1}(\Lambda^4)$, $\Psi^{-1}(\Lambda^2_3\oplus\Lambda^2_7)$,
$\Psi^{-1}(\odot^2_5\oplus\odot^2_9)$ by ${\cal A}$, ${\cal B}^{-}$, ${\cal B}^{+}$, respectively.
Then, in view of Lemma~\ref{P},
\begin{equation}\label{R-decom}
{\cal R}=\frac{5s_R}{6}{\cal P}+{\cal A} + {\cal B}^{-} + {\cal B}^{+}.
\end{equation}

Take $\sigma\in{\Bbb T}$ and choose an adapted basis
$\{a_1,...,a_5\}$ of $T_pM$, $p=\pi(\sigma)$, such that
$\sigma=\kappa_3$. Then $H^{\sigma}=span\{a_2,a_4,a_3,a_5\}$ and
$\varphi^{\sigma}_{+}a_2=a_4$, $\varphi^{\sigma}_{+}a_3=a_5$.

It is easy to check that for every $X,Y\in H^{\sigma}$ and $ \tau\in {\Bbb T}$, $\tau\perp\sigma$,
we have $-X\wedge Y+\varphi^{\sigma}_{+}X\wedge \varphi^{\sigma}_{+}Y\perp\tau$. This implies
$$
g({\cal P}(-X\wedge Y+\varphi^{\sigma}_{+}X\wedge \varphi^{\sigma}_{+}Y),\sigma\times\tau)+g({\cal P}(\varphi^{\sigma}_{+}X\wedge
Y+X\wedge \varphi^{\sigma}_{+}Y),\tau)=0.
$$
Next, in view of Lemma~\ref{nu} (with $\nu=A_R$), it is easy  to check that
$$
g({\cal A}(-X\wedge Y+\varphi^{\sigma}_{+}X\wedge \varphi^{\sigma}_{+}Y),\sigma\times\tau)+g({\cal A}(\varphi^{\sigma}_{+}X\wedge
Y+X\wedge \varphi^{\sigma}_{+}Y),\tau)=0.
$$
Now, let $\eta$ be a bilinear form on $T_pM$. For $X,Y\in H^{\sigma}$ and $\tau\in{\Bbb T}$ with
$\pi(\tau)=\pi(\sigma)$, set
\begin{equation}\label{DE}
\begin{array}{c}
{\cal E}_{\eta}(X,Y,\sigma,\tau)=\\[6pt]g(-\eta(X)\wedge Y-X\wedge\eta(Y)
+\eta(\varphi^{\sigma}_{+}X)\wedge \varphi^{\sigma}_{+}Y+\varphi^{\sigma}_{+}X\wedge
 \eta(\varphi^{\sigma}_{+}Y),\sigma\times\tau)\\[6pt]
+g(\eta(\varphi^{\sigma}_{+}X)\wedge Y+\varphi^{\sigma}_{+}X\wedge\eta(Y)+\eta(X)\wedge \varphi^{\sigma}_{+}Y+X\wedge
\eta(\varphi^{\sigma}_{+}Y),\tau).
\end{array}
\end{equation}
We have
\begin{equation}\label{E}
\begin{array}{c}
{\cal E}_{\eta}(X,Y,\sigma,\tau)=-{\cal E}_{\eta}(Y,X,\sigma,\tau), \> {\cal
E}_{\eta}(\varphi^{\sigma}_{+}X,\varphi^{\sigma}_{+}Y,\sigma,\tau)=-{\cal E}_{\eta}(X,Y,\sigma,\tau),\\[6pt]
{\cal E}_{\eta}(X,\varphi^{\sigma}_{+}X,\sigma,\tau)=0,\\[6pt]
{\cal E}_{\eta}(a_2,a_3,\kappa_3,\kappa_1)={\cal E}_{\eta}(a_2,a_5,\kappa_3,\kappa_2)=2(\eta(a_2,a_4)+\eta(a_4,a_2)),\\[6pt]
{\cal E}_{\eta}(a_2,a_3,\kappa_3,\kappa_2)=-{\cal
E}_{\eta}(a_2,a_5,\kappa_3,\kappa_1)=2(\eta(a_2,a_2)-\eta(a_4,a_4)).
\end{array}
\end{equation}
In view of Lemma~\ref{skew} (with $\eta=\rho_R^{-}$), this implies that, for $\tau\perp\sigma$,
$X,Y\in H^{\sigma}$,
$$
g({\cal B}^{-}(-X\wedge Y+\varphi^{\sigma}_{+}X\wedge \varphi^{\sigma}_{+}Y),\sigma\times\tau)+g({\cal
B}^{-}(\varphi^{\sigma}_{+}X\wedge Y+X\wedge \varphi^{\sigma}_{+}Y),\tau)=0.
$$
If $\eta$ is symmetric, then
$$
\eta(a_2,a_4)=-4\eta'(a_2,a_4),\quad \eta(a_2,a_2)-\eta(a_4,a_4)=-4(\eta'(a_2,a_2)-\eta'(a_4,a_4)),
$$
where, as above, $\eta'(X,Z)=Trace\{\Lambda^2_3\ni\sigma\to \eta(\sigma X,\sigma Z)\}$. The latter
identities, (\ref{E}) and Lemma~\ref{symm} imply that for $\eta\in\odot^2_5 T_pM\oplus\odot^2_9
T_pM$
\begin{equation}\label{Rmin}
\begin{array}{c}
g({\cal K}^{+}_{\eta}(-X\wedge Y+\varphi^{\sigma}_{+}X\wedge \varphi^{\sigma}_{+}Y),\sigma\times\tau)+g({\cal
K}^{+}_{\eta}(\varphi^{\sigma}_{+}X\wedge Y+X\wedge \varphi^{\sigma}_{+}Y),\tau)\\[6pt]
=5{\cal E}_{\eta}(X,Y,\sigma,\tau).
\end{array}
\end{equation}
It follows that identity (\ref{RXY}) holds for every $\sigma, \tau\in {\Bbb T}$ with $\pi(\sigma)=\pi(\tau)$, $\sigma\perp\tau$ and
for every $X,Y\in H^{\sigma}$  if and only if
$$
{\cal E}_{\eta}(X,Y,\sigma,\tau)=0
$$
where $\eta=\rho_R^{+}-\displaystyle{\frac{1}{5}}s_R g$.

Suppose  identity (\ref{RXY}) holds for  $\sigma, \tau\in {\Bbb T}$ with $\pi(\sigma)=\pi(\tau)$, $\sigma\perp\tau$ and
$X,Y\in H^{\sigma}$. Let $p\in M$ and let $\{a_1,...,a_5\}$ be an adapted basis of $T_pM$.
Define $\{\kappa_1,\kappa_2,\kappa_3\}$ by means of this basis. Set
$\eta_{\alpha\beta}=\eta(a_{\alpha},a_{\beta})$ for $\eta=\rho_R^{+}-\displaystyle{\frac{1}{5}}s_R
g$, $\alpha,\beta=1,...,5$. Then (\ref{E}) gives
$$
\eta_{24}=0,\quad \eta_{22}-\eta_{44}=0.
$$
If $\sigma=\kappa_2$, then $\varphi^{\sigma}_{+}(\frac{\sqrt 3}{2}a_1-\frac{1}{2}a_4)=a_3$,
$\varphi^{\sigma}_{+}a_2=a_5$ and (\ref{DE}) implies
$$
\sqrt{3}\eta_{14}-\frac{3}{2}\eta_{11}+2\eta_{33}-\frac{1}{2}\eta_{44}=0,\quad
\sqrt{3}\eta_{13}-\eta_{34}=0.
$$
For $\sigma=\kappa_1$, we have $\varphi^{\sigma}_{+}(\frac{\sqrt 3}{2}a_1+\frac{1}{2}a_4)=a_5$, $\varphi^{\sigma}_{+}a_2=a_3$
and we obtain from (\ref{DE}) that
$$
\sqrt{3}\eta_{14}+\frac{3}{2}\eta_{11}-2\eta_{55}+\frac{1}{2}\eta_{44}=0,\quad
\sqrt{3}\eta_{15}+\eta_{45}=0.
$$

Every adapted basis $(a_1,...,a_5)$ can be used to obtain a new adapted basis $(a_1',...,a'_5)$ by
means of formulas (\ref{a'}).  According to the preceding considerations,
$\sqrt{3}\eta(a_1',a_3')-\eta(a_3',a_4')=0$. In view of (\ref{a'}), this identity can be written as
$$
\begin{array}{c}
(\sqrt{3}\eta_{13}+2\eta_{25}\sin^2\psi)\cos\psi
+(\sqrt{3}\eta_{15}+2\eta_{23}\cos^2\psi)\sin\psi \\[6pt]
-(\eta_{34}\cos\psi+\eta_{45}\sin\psi)\cos 2\psi=0.
\end{array}
$$
Taking $\psi=\displaystyle{\frac{\pi}{4}}$ and $\psi=\displaystyle{\frac{3\pi}{4}}$, we obtain
$$
(\sqrt{3}\eta_{13}+\eta_{25}) +(\sqrt{3}\eta_{15}+\eta_{23})=0,\quad -(\sqrt{3}\eta_{13}+\eta_{25})
+(\sqrt{3}\eta_{15}+\eta_{23})=0
$$
Thus, we have
$$
\sqrt{3}\eta_{13}+\eta_{25}=0,\quad \sqrt{3}\eta_{15}+\eta_{23}=0.
$$
Finally, consider the adapted basis $\{c_1,...,c_5\}$ obtained from $\{a_1,...,a_5\}$ by means of
formulas (\ref{c}). Then $\eta(c_2,c_2)=\eta(c_4,c_4)$ and we obtain the identity
$$
\begin{array}{c}
-\displaystyle{\frac{3}{4}\eta_{11}-\frac{1}{4}\eta_{22}\sin^2 2\theta+\eta_{33}\cos^2\theta+\eta_{55}\sin^2\theta}\\[6pt]
-(\displaystyle{\frac{\sqrt{3}}{2}}\eta_{12}-\eta_{35})\sin 2\theta
-\displaystyle{(\frac{\sqrt{3}}{2}\eta_{14}+\frac{1}{2}\eta_{24}\sin
2\theta-\frac{1}{4}\eta_{44}\cos 2\theta)\cos 2\theta}=0.
\end{array}
$$
Taking $\theta=\displaystyle{\frac{\pi}{4}}$ and $\theta=\displaystyle{\frac{3\pi}{4}}$, we get
$$
-\displaystyle{\frac{3}{4}\eta_{11}-\frac{1}{4}\eta_{22}+\frac{1}{2}\eta_{33}+\frac{1}{2}\eta_{55}}=0,\quad
\sqrt{3}\eta_{12}-2\eta_{35}=0.
$$
Thus, the form $\eta=\rho_R^{+}-\displaystyle{\frac{1}{5}}s_R g$ vanishes on the vectors $a_2\odot
a_4$, $a_2\odot a_2-a_4\odot a_4$, $2\sqrt{3}a_1\odot a_4-3a_1\odot a_1 +4a_3\odot a_3-a_4\odot
a_4$, $\sqrt{3}a_1\odot a_3-a_3\odot a_4$, $2\sqrt{3}a_1\odot a_4+3a_1\odot a_1 -4a_5\odot
a_5+a_4\odot a_4$, $\sqrt{3}a_1\odot a_5+a_4\odot a_5$, $\sqrt{3}a_1\odot a_3+a_2\odot a_5$,
$\sqrt{3}a_1\odot a_5+a_2\odot a_3$, $3a_1\odot a_1+a_2\odot a_2-2a_3\odot a_3-2a_5\odot a_5$,
$\sqrt{3}a_1\odot a_2-2a_3\odot a_5$. This vectors constitute a basis of $\odot^2_9T_pM$. It
follows that the $\odot^2_9$-component of ${\cal R}$ vanishes.

  Conversely, suppose that $\odot^2_9$-component of ${\cal R}$ vanishes. Let $\sigma\in{\Bbb T}$ and take an adapted basis $\{a_1,...,a_5\}$ of $T_pM$, $p=\pi(\sigma)$,
such that $\sigma=\kappa_3$. By assumption, the form
$\eta=\rho_R^{+}-\displaystyle{\frac{1}{5}}s_R g$ vanishes on the vectors $a_2\odot a_4$ and
$a_2\odot a_2-a_4\odot a_4$. Hence, by (\ref{E}), ${\cal E}_{\eta}(X,Y,\sigma,\tau)=0$ for
every $\sigma, \tau\in {\Bbb T}$ with $\pi(\sigma)=\pi(\tau)$, $\sigma\perp\tau$ and $X,Y\in
H^{\sigma}$. Then, by (\ref{Rmin}),
$$
g({\cal K}^{+}_{\eta}(-X\wedge Y+\varphi^{\sigma}_{+}X\wedge
\varphi^{\sigma}_{+}Y),\sigma\times\tau)+g({\cal K}^{+}_{\eta}(\varphi_{+}^{\sigma}X\wedge Y+X\wedge
\varphi^{\sigma}_{+}Y),\tau)=0.
$$
Now, it follows from the preceding considerations that
identity (\ref{RXY}) holds for $\sigma, \tau\in {\Bbb T}$ with $\pi(\sigma)=\pi(\tau)$, $\sigma\perp\tau$ and
$X,Y\in H^{\sigma}$.

\smallskip

Next, note that, in view of (\ref{F}), identity (\ref{RXY}) holds for $\sigma\in{\Bbb
T}$, $X\in H^{\sigma}$ and $Y=\xi_{\sigma}$ if and only if
\begin{equation}\label{RXxi}
g({\cal R}(X\wedge\xi_{\sigma}),\sigma\times\tau)-g({\cal R}(\varphi^{\sigma}_{+}X\wedge\xi_{\sigma}),\tau)=0
\end{equation}
for every $\sigma\in{\Bbb T}$, $\tau\in{\cal V}_{\sigma}$, $X\in
H^{\sigma}$. Let $p\in M$, $\sigma\in \Lambda^2_3T_pM$, $|\sigma|^2=5$. Taking an
adapted basis $a=(a_1,...,a_5)$ of $T_pM$ such that $\sigma=\kappa_3(a)$, it is easy to check that
the operator ${\cal P}$ satisfies identity (\ref{RXxi}). Suppose that this identity holds. Let
$a=(a_1,...,a_5)$ be an arbitrary adapted basis of $T_pM$. Set
$\rho^{-}_{\alpha\beta}=\rho_R^{-}(a_{\alpha},a_{\beta })$,
$\eta=\rho_R^{+}-\displaystyle{\frac{1}{5}}s_R g$ and
$\eta_{\alpha\beta}=\eta(a_{\alpha},a_{\beta})$, $\alpha,\beta=1,...,5$. Recall that for
$\sigma=\kappa_1(a)$, we have $\xi_{\sigma}=\frac{1}{2}a_1-\frac{\sqrt 3}{2}a_4$, $X=\frac{\sqrt
3}{2}a_1+\frac{1}{2}a_4\in H^{\sigma}$ and $\varphi^{\sigma}_{+}X=a_5$. Set $\tau=\kappa_2(a)$. Then, by a computation using
(\ref{R-decom}),  we get from (\ref{RXxi})  that
$$
3\sqrt 3((\sqrt
3\eta_{12}-2\eta_{35})+\eta_{24})+3\rho^{-}_{12}-\sqrt
3(\rho^{-}_{24}-2\rho^{-}_{35})=0.
$$
We have $\sqrt 3\eta_{12}-2\eta_{35}=0$ and $\eta_{24}=0$ since the $\odot^2_9$-component of ${\cal
R}$ vanishes, i.e $\eta$ vanishes on $\odot^2_9TM$. Hence
\begin{equation}\label{r-1}
3\rho^{-}_{12}-\sqrt 3(\rho^{-}_{24}-2\rho^{-}_{35})=0.
\end{equation}
Similarly, applying (\ref{RXxi}) with $\sigma=\kappa_1(a)$, $\tau=\kappa_3(a)$, $X=\frac{\sqrt
3}{2}a_1+\frac{1}{2}a_4$ and taking into account (\ref{R-decom}) and the fact that
$\eta|\odot^2_9TM=0$, we get
\begin{equation}\label{r-2}
(\rho^{-}_{13}-\sqrt 3\rho^{-}_{34})+2(\rho^{-}_{13}-\sqrt
3\rho^{-}_{25})=0.
\end{equation}
If $\sigma=\kappa_2(a)$, then
$\xi_{\sigma}=\frac{1}{2}a_1+\frac{\sqrt 3}{2}a_4$, $X=\frac{\sqrt
3}{2}a_1-\frac{1}{2}a_4\in H^{\sigma}$ and $\varphi^{\sigma}_{+}X=a_3$.
Then, putting $\tau=\kappa_1(a)$ and $\tau=\kappa_3(a)$ in (\ref{RXxi}), we
obtain the identities
\begin{equation}\label{r-3}
3\rho^{-}_{12}+\sqrt 3(\rho^{-}_{24}-2\rho^{-}_{35})=0,\quad
(\rho^{-}_{15}-\sqrt 3\rho^{-}_{45})+2(\rho^{-}_{15}-\sqrt
3\rho^{-}_{23})=0.
\end{equation}
For $\sigma=\kappa_3(a)$, $X=a_2$, identity (\ref{RXxi}) with
$\tau=\kappa_1(a)$ and $\tau=\kappa_2(a)$ gives
\begin{equation}\label{r-4}
\rho^{-}_{23}-\rho^{-}_{45}=0,\quad \rho^{-}_{25}-\rho^{-}_{34}=0.
\end{equation}
We also set $\sigma=\frac{1}{\sqrt 2}(\kappa_1(a)+\kappa_2(a))$. Then
$\xi_{\sigma}=\frac{1}{2}a_1-\frac{\sqrt 3}{2}a_2$, $X=\frac{\sqrt 3}{2}a_1+\frac{1}{2}a_2\in
H^{\sigma}$ and $\varphi^{\sigma}_{+}X=\frac{1}{\sqrt 2}(a_3+a_5)$. Setting $\tau=\kappa_1(a)-\kappa_2(a)$ in
(\ref{RXxi}), we obtain
\begin{equation}\label{r-5}
\sqrt 3\rho^{-}_{14}+(\rho^{-}_{24}-2\rho^{-}_{35})=0.
\end{equation}
Identities (\ref{r-1}) - (\ref{r-5}) imply that $\rho_R^{-}$ vanishes on the bi-vectors $a_1\wedge
a_2$, $a_2\wedge a_4-2a_3\wedge a_5$, $a_1\wedge a_4$, $a_2\wedge a_3 -a_4\wedge a_5$, $a_2\wedge
a_5-a_3\wedge a_4$, $a_1\wedge a_5-\sqrt 3 a_4\wedge a_5$, $a_1\wedge a_3-\sqrt 3 a_3\wedge a_4$.
These bi-vectors form a basis of the space $\Lambda^2_7T_pM$. Hence $\rho_R^{-}|\Lambda^2_7T_pM=0$.
This means that $\Lambda^2_7$-component of ${\cal R}$ vanishes. Now, let $\sigma\in{\Bbb T}$ be
arbitrary. Take an adapted basis $a=(a_1,...,a_5)$ of $T_{\pi(\sigma)}M$ such that
$\sigma=\kappa_3(a)$ and apply (\ref{RXxi}) with $X=a_3$ and $\tau=\kappa_1(a)$, Then a computation
making use of Lemmas~\ref{nu}--\ref{symm} gives
$$
24 A_{1235}+\rho_{14}^{-} + (-4\sqrt 3\eta_{33}+4\sqrt 3\eta_{55}-2\eta_{14})=0
$$
where $A_{1235}=A_R(a_1\wedge a_2\wedge a_3\wedge a_5)$. We have $\rho_{14}^{-}=0$ since
$\rho_R^{-}|\Lambda^2_7TM=0$ and $-4\sqrt 3\eta_{33}+4\sqrt 3\eta_{55}-2\eta_{14}=10\eta_{14}$
since $\eta|\odot^2_9TM=0$. Thus
$$
12A_R(a_1\wedge a_2\wedge a_3\wedge a_5)+5\eta(a_1,a_4)=0
$$
Applying (\ref{RXxi}) with $X=a_3$ and $\tau=\kappa_2(a)$ we get by a similar computation that
$$
12A_R(a_1\wedge a_3\wedge a_4\wedge a_5)+5\eta(a_1,a_2)=0.
$$
The left-hand sides of the last two identities are equal to $-Q(\kappa_3,a_4)$ and
$-Q(\kappa_3,a_2)$, respectively. It follows that $Q(\sigma,X)=0$ for $X\perp ({\cal
V}(\xi_{\sigma})\oplus {\Bbb R}\xi_{\sigma})$. This and Lemma~\ref{Q} imply that $Q(\sigma,X)=0$
for every $\sigma\in{\Bbb T}$, $X\in T_{\pi(\sigma)}M$. It follows that $Q=0$ in view of the
homogeneity of $Q(\sigma;X)$ in $\sigma$.

Conversely, suppose that $\ast T\in\Lambda^2_3TM$, $Q=0$ and the $\odot^2_9$, and
$\Lambda^2_7$-components of ${\cal R}$ vanish.  Fix
an arbitrary $\sigma\in{\Bbb T}$ and take an adapted basis of $T_{\pi(\sigma)}M$ for which
$\sigma=\kappa_3$. Then it is easy to check that identities (\ref{TXY}) and (\ref{RXY}) are fulfilled.
\end{proof}

\smallskip

The next statement, combined with Lemma~\ref{Q},  might be useful to check the condition $Q=0$.

\begin{lemma}
Let $A$ be a skew-symmetric $4$-form and $\eta$ a symmetric bilinear form on ${\Bbb R}^5$ such that
$\eta|\odot^2_9=0$. Define a tensor $Q$ by (\ref{tenQ}) with $A_R=A$.

Suppose that $Q(q_i,X)=0$, $i=1,2,3$, for an orthogonal basis $q_1,q_2,q_3$ of $\Lambda^2_3$ with
$|q_1|^2=|q_2|^2=|q_3|^2=5$ and every  $X\in {\Bbb R}^5$.

Then $Q(\sigma,X)=0$ for every $\sigma\in \Lambda^2_3$ and every $X\in {\Bbb R}^5$.
\end{lemma}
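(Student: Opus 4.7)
The plan is to reduce the statement to a polynomial identity in three variables and then verify it by direct computation, using Lemma~\ref{Q} to get rid of the awkward operator $\varphi^{\sigma}_{+}$.

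First, by Lemma~\ref{ort-basis} I may choose an adapted basis $a=(a_1,\dots,a_5)$ of $\mathbb R^5$ with $q_i=\kappa_i(a)$ for $i=1,2,3$. Writing a general $\sigma\in\Lambda^2_3$ as $\sigma=y_1\kappa_1+y_2\kappa_2+y_3\kappa_3$, the tensor $Q(\sigma,X)$ becomes a polynomial on $\mathbb R^3\times\mathbb R^5$ that is homogeneous of degree six in $(y_1,y_2,y_3)$ and linear in $X\in\mathbb R^5$. The hypothesis then reads: this polynomial and each of its $X$-components vanish whenever $\sigma$ lies on one of the three coordinate axes of $\mathbb R^3$.

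Second, I invoke Lemma~\ref{Q}. The identities $Q(\sigma,\xi_\sigma)=0$ and $Q(\sigma,X)=0$ for $X\in\mathcal V(\xi_\sigma)$ hold for every $\sigma$ with no use of the hypothesis, so it suffices to show $Q(\sigma,X)=0$ on the two--dimensional subspace $W_\sigma:=(\mathbb R\xi_\sigma\oplus\mathcal V(\xi_\sigma))^{\perp}$. On $W_\sigma$ the third identity of Lemma~\ref{Q} reduces $Q$ to the substantially simpler expression
$$
Q(\sigma,X)\;=\;-6\,A(\xi_\sigma\wedge\imath_X\sigma\wedge\sigma)\,|\sigma|^{2}\;+\;5\,\eta(X,\xi_\sigma)\,g(\xi_\sigma,\xi_\sigma),
$$
in which only $A$, $\eta$, and the quadratic map $\sigma\mapsto\xi_\sigma$ given by~(\ref{xi}) appear. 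The operator $\varphi^{\sigma}_{+}$ has been eliminated and the polynomial dependence on $(y_1,y_2,y_3)$ is now fully transparent.

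Third, I would compute the spaces $W_{\kappa_i}$ explicitly. For instance $\xi_{\kappa_3}=-a_1$, $\mathcal V(\xi_{\kappa_3})=\operatorname{span}\{a_3,a_5\}$, so $W_{\kappa_3}=\operatorname{span}\{a_2,a_4\}$; the analogous computation (using (\ref{xi})) gives two explicit basis vectors of $W_{\kappa_1}$ and of $W_{\kappa_2}$, each mixing $a_1$ with one of $a_2,a_4$. Substituting these six basis vectors into the reduced formula for $Q$ turns the hypothesis into six scalar linear equations between the components of $A$ (equivalently, of $*A\in\mathbb R^5$) and of $\eta\in\odot^2_1\oplus\odot^2_5$.

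Finally, the crux is to verify that these six linear conditions already force the reduced polynomial $Q(\sigma,X)$ in $(y_1,y_2,y_3)$ to vanish identically for all $X\in W_\sigma$. Because $\xi_\sigma$ is a specific quadratic polynomial in $(y_1,y_2,y_3)$ and $\imath_X\sigma$ is linear in $\sigma$, each coefficient of a monomial $y_1^iy_2^jy_3^k$ (with $i+j+k=6$) in the reduced formula is an explicit, universal $\mathbb R$-linear combination of the components of $A$ and of $\eta$; the task is to check that every such coefficient is spanned by the six hypothesized scalars, making essential use of the constraint $\eta|_{\odot^2_9}=0$. I expect this bookkeeping step to be the principal obstacle, and would streamline it by exploiting the $SO(3)$--transformations $h_\psi,h_\theta,h_\varphi$ of Lemma~\ref{orient}, which permute the axes $\mathbb R\kappa_1,\mathbb R\kappa_2,\mathbb R\kappa_3$ of $\Lambda^2_3$ by rotations (when the parameters are chosen at right angles) and thereby collapse the verification into a small number of symmetry--equivalent monomial classes.
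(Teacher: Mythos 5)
Your overall architecture is the right one and coincides with the paper's: reduce to $|\sigma|^2=5$ by homogeneity, use Lemma~\ref{ort-basis} to put $q_i=\kappa_i(a)$, use Lemma~\ref{Q} to cut the verification down to the two-dimensional spaces $({\Bbb R}\xi_\sigma\oplus{\cal V}(\xi_\sigma))^{\perp}$ where $Q$ takes the reduced form $-6A(\xi_\sigma\wedge\imath_X\sigma\wedge\sigma)|\sigma|^2+5\eta(X,\xi_\sigma)g(\xi_\sigma,\xi_\sigma)$, and transport the hypothesis to a general $\sigma$ via the transformations of Lemma~\ref{orient}. But the proof stops exactly where the lemma's content begins. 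The step you defer as ``the principal obstacle'' --- showing that the six scalar identities obtained at $\kappa_1(a),\kappa_2(a),\kappa_3(a)$ force the vanishing at every $\sigma$ --- \emph{is} the proof; everything before it is setup already available from Lemmas~\ref{ort-basis} and~\ref{Q}. The paper's argument consists almost entirely of carrying out this step: it writes an arbitrary $\sigma$ as $\kappa_1(a')$ with $a'=\imath(h_\psi h_\theta h_\varphi)a$, and in three successive stages (bases $b$, $c$, $a'$ of (\ref{b}), (\ref{c}), (\ref{a'})) computes $Q(\kappa_j(b),\cdot)$, $Q(\kappa_j(c),\cdot)$, $Q(\kappa_1(a'),\cdot)$ on the relevant two-dimensional spaces and exhibits each as an explicit combination of the hypothesized zeros, of $\odot^2_9$-components of $\eta$, and of derived identities such as $6A_{2345}+\frac54(\eta_{11}-\eta_{44})=0$ (which is \emph{not} one of the six hypotheses but must be extracted by combining two of them). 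None of this is present in your proposal, so the lemma is not yet proved.

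Two further points about the plan itself. First, the ``polynomial in $(y_1,y_2,y_3)$'' framing is not directly compatible with the reduced formula: that formula is valid only for $X$ in the subspace $W_\sigma$, which rotates with $\sigma$, so for a \emph{fixed} $X\in{\Bbb R}^5$ the reduced expression does not compute $Q(\sigma,X)$ as $\sigma$ varies; to get a genuine polynomial identity you must either keep the full formula (\ref{tenQ}) with $\varphi^\sigma_+$, or parametrize $X$ by a $\sigma$-dependent frame of $W_\sigma$ --- which is precisely the moving-frame device the paper implements through the adapted bases $b,c,a'$. Second, using $h_\psi,h_\theta,h_\varphi$ only ``at right angles'' to permute the axes ${\Bbb R}\kappa_1,{\Bbb R}\kappa_2,{\Bbb R}\kappa_3$ cannot suffice: the orbit of the three axes under these permutations is still just the three axes, and vanishing of a homogeneous sextic on the coordinate axes controls only the coefficients of $y_1^6,y_2^6,y_3^6$; all mixed monomials remain unconstrained by symmetry alone. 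To reach a general $\sigma$ one needs the one-parameter families with arbitrary $\varphi,\theta,\psi$, exactly as in formulas (\ref{k(b)}), (\ref{k(c)}), (\ref{k(a')}), and at that point the trigonometric coefficients must be matched against the six hypothesized relations and the $\odot^2_9$-constraint --- the computation you have not done.
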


\begin{proof}
It is enough to show that $Q(\sigma,X)=0$ in the case when $|\sigma|=5$. We may assume that the
basis $q_1,q_2,q_3$ yields the orientation of $\Lambda^2_3$. Then, by Lemma~\ref{ort-basis}, there
are adapted bases $a=(a_1,...,a_5)$ and $a'=(a_1',...,a_5')$ of ${\Bbb R}^5$ such that
$q_1=\kappa_1(a), q_2=\kappa_2(a), q_3=\kappa_3(a)$ and $\sigma=\kappa_1(a')$. Since the group
$SO(3)$ acts transitively on the set of adapted bases, there is a matrix $h$ in $SO(3)$ such that
$\imath (h)$ sends the basis $a$ to the basis $a'$. We shall use the notation in the proof of
Lemma~\ref{orient}. First we show that $Q(\tau,X)=0$ for $\tau=\kappa_1(b),\kappa_2(b),\kappa_3(b)$
where $\kappa_i(b)$, $i=1,2,3$, are given by (\ref{k(b)}) and the adapted basis $b=(b_1,...,b_5)$
is defined by (\ref{b}). If $\tau=\kappa_1(b)$, then $\xi_{\tau}=\frac{1}{2}b_1-\frac{\sqrt
3}{2}b_4=\frac{1}{2}a_1-\frac{\sqrt 3}{2}a_4\sin 2\varphi+\frac{\sqrt 3}{2}a_4\cos 2\varphi$ and
$({\Bbb R}\xi_{\tau}\oplus{\cal V}(\xi_{\tau}))^{\perp}=span\{\frac{\sqrt
3}{2}b_1+\frac{1}{2}b_4,b_5\}$ where $\frac{\sqrt 3}{2}b_1+\frac{1}{2}b_4=-\frac{\sqrt
3}{2}a_1-\frac{1}{2}a_2\sin 2\varphi+\frac{1}{2}a_4\cos 2\varphi$ and
$b_5=a_3\cos\varphi+a_5\sin\varphi$. Set $A_{\alpha\beta\gamma\delta}=A(a_{\alpha}\wedge
a_{\beta}\wedge a_{\gamma}\wedge a_{\delta})$ and $\eta_{\alpha\beta}=\eta(a_{\alpha},a_{\beta})$,
$\alpha,\beta,\gamma,\delta=1,...,5$. Then, applying Lemma~\ref{Q}, we obtain
$$
\begin{array}{c}
Q(\kappa_1(b),\frac{\sqrt 3}{2}b_1+\frac{1}{2}b_4)=\displaystyle{\frac{\sin
2\varphi}{2}}(12A_{1345}+5\eta_{12})-\displaystyle{\frac{\cos 2\varphi}{2}}(12A_{1235}+5\eta_{14})\\[6pt]
-\sqrt 3(6A_{2345}+\displaystyle{\frac{5}{4}}\eta_{11}-\displaystyle{\frac{5}{4}}\eta_{22}\sin^2
2\varphi -\displaystyle{\frac{5}{4}}\eta_{44}\cos^2 2\varphi)-\displaystyle{\frac{5\sqrt
3}{4}}\eta_{24}\sin 4\varphi .
\end{array}
$$
We also have
$$
12A_{1345}+5\eta_{12}=-Q(\kappa_3(a),a_2)=0,
$$
$$
12A_{1235}+5\eta_{14}=-Q(\kappa_3(a),a_4)=0,
$$
$$
-6A_{1235}+6\sqrt{3}
A_{2345}+\displaystyle{\frac{5}{4}}(\sqrt{3}\eta_{11}-2\eta_{14}-\sqrt{3}\eta_{44})=Q(\kappa_1(a),\frac{\sqrt{3}}{2}a_1+\frac{1}{2}a_4)=0.
$$
The last two identities imply
$$
6A_{2345}+\displaystyle\frac{5}{4}(\eta_{11}-\eta_{44})=0.
$$
We have $\eta_{22}=\eta_{44}$ and $\eta_{24}=0$ since $\eta|\odot^2_9=0$. It follows that
$$Q(\kappa_1(b),\frac{\sqrt 3}{2}b_1+\frac{1}{2}b_4)=0.$$

It is easy to compute that
$$
\begin{array}{c}
Q(\kappa_1(b),b_5)=Q(\kappa_1(a),a_5)\sin\varphi+ Q(\kappa_2(a),a_3)\cos\varphi\\[6pt]
+\displaystyle{\frac{5\sqrt 3}{2}}(\eta_{45}-\eta_{23})\sin 2\varphi\cos\varphi -
\displaystyle{\frac{5\sqrt 3}{2}}(\eta_{25}+\eta_{34})\sin 2\varphi\sin\varphi.
\end{array}
$$
By assumption, $Q(\kappa_1(a),a_5)=Q(\kappa_2(a),a_3)=0$ and $\eta_{45}-\eta_{23}=0$,
$\eta_{25}+\eta_{34}=0$. Thus, $$Q(\kappa_1(b),b_5)=0.$$

Similar computations show also that
$$
Q(\kappa_2(b),\frac{\sqrt
3}{2}b_1-\frac{1}{2}b_4)=Q(\kappa_2(b),b_3)=0,
$$
$$
Q(\kappa_3(b),b_2)=Q(\kappa_3(b),b_4)=0.
$$
Now Lemma~\ref{Q} implies that $Q(\tau,X)=0$ for $\tau=\kappa_1(b),\kappa_2(b),\kappa_3(b)$ and
$X\in{\Bbb R}^5$. Using this, (\ref{c}) and (\ref{k(c)}) we see in a similar way that $Q(\tau,X)=0$
for $\tau=\kappa_1(c),\kappa_2(c),\kappa_3(c)$. The latter identity, (\ref{a'}) and (\ref{k(a')})
imply $Q(\kappa_1(a'),X)=0$.
\end{proof}

\smallskip
\noindent {\bf Examples}. 1. According to \cite[Theorem 4.7]{BN} every $5$-manifold $M$ with an
irreducible $SO(3)$-structure whose characteristic connection coincides with the Levi-Civita
connection is locally isometric to one of the following symmetric space ${\Bbb R}^5$,
$SU(3)/SO(3)$, $SL(3,{\Bbb R})/SO(3)$. The Riemannian metric of these spaces is Einstein. It is
flat only for $M={\Bbb R}^5$; in the other two cases it is not even conformally flat [ibid.].
Clearly we have $\rho_R^{-}=0$, $\rho_R^{+}-\displaystyle{\frac{1}{5}}s_R g=0$. Also, by the
algebraic Bianchi identity, $A_R=0$, hence $Q=0$. Thus, by Theorem~\ref{normal}, the almost
contact structure of the twistor space of $M$ is normal.

\smallskip

\noindent 2. Consider $SO(2)$ as a subgroup of $SO(3)$ via the standard diagonal imbedding $A\to
(1,A)$. Then $SO(2)$ can also be considered as a subgroup of $SO(3)\times SO(1,2)$ by means of the
map $SO(2)\ni A\to (A^2,A)$. It is shown in the proof of \cite[ Proposition 6.3 ]{BN} that for
every $t\in{\Bbb R}\setminus\{0\}$ there exists linearly independent invariant $1$-forms
$\tilde\theta_1,..., \tilde\theta_5$ such that:

\smallskip
\noindent $(1)$ the tensors
$$
\tilde g={\tilde\theta_1}^2+....+{\widetilde\theta_5}^2,\quad
\tilde\Upsilon=
\displaystyle{\frac{1}{2}}\tilde\theta_1(6{\tilde\theta_2}^2+6{\tilde\theta}_4^2-2\tilde{\theta}_1^2-3{\tilde\theta_3}^2-3{\tilde\theta_5}^2)
+\displaystyle{\frac{3\sqrt 3}{2}}\tilde\theta_4({\tilde\theta_5}^2-{\tilde\theta_3}^2) +3\sqrt
3\tilde\theta_2\tilde\theta_3\tilde\theta_5
$$
descend to a nearly integrable irreducible
$SO(3)$-structure $(g,\Upsilon)$ on the $5$-dimensional manifold $M=(SO(3)\times SO(1,2))/SO(2)$,

\smallskip
\noindent $(2)$ if $\theta_1,....,\theta_5$ is the co-frame on $M$ induced by the forms
$\tilde\theta_1,..., \tilde\theta_5$, the torsion $3$-form $T$ and the curvature tensor $R$ of the
characteristic connection are given by
$$
\begin{array}{c}
T=t(\theta_1\wedge\theta_2\wedge\theta_4+2\theta_1\wedge\theta_3\wedge\theta_5),\\[6pt]
g(R(X,Y)Z,U)=2t^2g(X\wedge Y,\kappa_3)g(\kappa_3,Z\wedge U) \quad X,Y,Z,U\in TM,
\end{array}
$$
where $\kappa_3$ is defined by means of the dual frame $E_1,...,E_5$ of $\theta_1,....,\theta_5$
({\it note}: the curvature tensor used here differs by a sign from that used in \cite{BN}).

We refer also to \cite{ABF} for more information about the irreducible $SO(3)$-structure on
$M=(SO(3)\times SO(1,2))/SO(2)$.

Clearly $\ast T=-t(2\theta_2\wedge\theta_4+\theta_3\wedge\theta_5)\in\Lambda^2_3T^{\ast}M$. For the
Ricci tensor $\rho$ we have the following identities
$$
\begin{array}{c}
\rho(E_1,E_{\alpha})=0,\quad \rho(E_2,E_{\alpha})=8t^2\delta_{2\alpha},\quad
\rho(E_3,E_{\alpha})=2t^2\delta_{3\alpha},\\[6pt]
\rho(E_4,E_{\alpha})=8t^2\delta_{4\alpha}, \quad \rho(E_5,E_{\alpha})=2t^2\delta_{5\alpha},\quad
\alpha=1,...,5.
\end{array}
$$
Hence $\rho^{-}=0$, in particular the $\Lambda^2_7$-component of ${\cal R}$ vanishes. Set
$\eta=\rho^{+}-\displaystyle{\frac{1}{5}}s g$ where $s$ is the scalar curvature. Then
$$
\begin{array}{c}
\eta(E_1,E_{\alpha})=-4t^2\delta_{1\alpha},\quad \eta(E_2,E_{\alpha})=4t^2\delta_{2\alpha},\quad
\eta(E_3,E_{\alpha})=-2t^2\delta_{3\alpha},\\[6pt]
\eta(E_4,E_{\alpha})=4t^2\delta_{4\alpha}, \quad \eta(E_5,E_{\alpha})=-2t^2\delta_{5\alpha},\quad
\alpha=1,...,5.
\end{array}
$$
This implies that $\eta$ vanishes on the basis of $\odot^2_9TM$ we have used in the
proof of Theorem~\ref{normal}, thus the $\odot^2_9$-component of ${\cal R}$ vanishes. If $A_R$ is
the anti-symmetrization of the curvature tensor, set $A_{ijkl}=A_R(E_i\wedge E_j\wedge E_k\wedge
E_l)$. Then $A_{1235}=0$, $A_{2345}=-\frac{4t^2}{3}$, thus
$$
\begin{array}{c}
Q(\kappa_1,\frac{\sqrt 3}{2}E_1+\frac{1}{2}E_4)=\\[6pt]
-6A_{1235}+6\sqrt{3}A_{2345}+\frac{5}{4}(\sqrt{3}\eta_{11}-2\eta_{14}-\sqrt{3}\eta_{44})=
-18\sqrt{3}t^2\neq 0.
\end{array}
$$
Therefore, by Theorem~\ref{normal}, the almost contact metric structure $(\Phi^{(1)}_{+},\chi,h_t)$
is not normal.

\section{Addendum}

In this section we give a new proof of the integrability result of \cite{BN} for the almost $CR$-structures $({\cal D},\cal
J^{(n)}_{\pm})$ on the manifold ${\Bbb T}$ defined at the beginning of Section 3.

   Recall that an almost  Cauchy-Riemann ($CR$) structure
on a manifold $N$ is a  pair $({\cal D},{\cal J})$ of a subbundle ${\cal D}$ of the tangent bundle
$TN$ and an almost complex structure ${\cal J}$ of the bundle ${\cal D}$. For any two sections
$X,Y$ of ${\cal D}$, the value of $[X,Y] \> mod\, {\cal D}$ at a point $p\in N$ depends only on the
values of $X$ and $Y$ at $p$, so we have a skew-symmetric bilinear form $\omega: {\cal D}\times
{\cal D}\to TN/{\cal D}$ defined by $\omega(X,Y)=[X,Y] \> mod\, {\cal D}$ and called the Levi form
of the almost $CR$-structure $({\cal D},{\cal J})$. If the Levi form is ${\cal J}$-invariant, we can
define the Nijenhuis tensor of the almost $CR$-structure $({\cal D},{\cal J})$ by
$$
N^{\it CR}(X,Y)=-[X,Y]+[{\cal J}X,{\cal J}Y]-{\cal J}([{\cal J}X,Y]+[X,{\cal J}Y]);
$$
its value at a point $p\in N$ lies in ${\cal D}$ and depends only on the values of the sections
$X,Y$ at $p$. An almost $CR$-structure is said to be {\it integrable} if its Levi form is ${\cal J}$-invariant
and the Nijenhuis tensor vanishes. An integrable almost $CR$-structure is called simpy a $CR$-structure.
Let ${\cal D}^{\Bbb C}={\cal D}^{1,0}\oplus {\cal D}^{0,1}$ be
the decomposition of the complexification of ${\cal D}$ into $(1,0)$ and $(0,1)$ parts with respect
to ${\cal J}$. If the almost $CR$-structure $({\cal D},{\cal J})$ is integrable, then the bundle ${\cal
D}^{1,0}$ satisfies the following two conditions:
$$
{\cal D}^{1,0}\cap \overline{{\cal D}^{1,0}}=0, \quad [\Gamma({\cal D}^{1,0}),\Gamma({\cal
D}^{1,0})]\subset\Gamma({\cal D}^{1,0})
$$
where $\Gamma({\cal D}^{1,0})$ stands for the space of smooth sections of ${\cal D}^{1,0}$.
Conversely, suppose we are given a complex subbundle ${\cal E}$ of the complexified tangent bundle
$T^{\Bbb C}N$ such that ${\cal E}\cap\bar {\cal E}=0$ and $[\Gamma({\cal E}),\Gamma({\cal
E})]\subset\Gamma({\cal E})$ (a bundle with these properties is often also called a
"$CR$-structure"). Set ${\cal D}=\{X\in TN: X=Z+\bar Z \> \mbox{ for some (unique)} \> Z\in {\cal
E}\}$ and put ${\cal J}X=-{\it Im}\,Z$ for $X\in {\cal D}$. Then $({\cal D},{\cal J})$ is an
integrable almost $CR$-structure such that ${\cal D}^{1,0}={\cal E}$.

\subsection{The Levi form of the almost $CR$-structures on the twistor space}

\begin{lemma}\label{Levi}
    Let $A,B\in {\cal D}_{\sigma}$ be horizontal vectors and $V,W\in{\cal D}_{\sigma}$ vertical ones at a
point $\sigma\in{\Bbb T}$. Then the Levi form $\omega_{\pm}^{(n)}$ of the almost  $CR$-structure $({\cal D},\cal
J^{(n)}_{\pm})$ is given by:
$$
\begin{array}{l}
\omega_{\pm}^{(n)}(A,B)=-g_p(T(\pi_{\ast}A,\pi_{\ast}B),\xi_{\sigma})(\xi_{\sigma})_{\sigma}^h,\\[8pt]
\omega_{\pm}^{(n)}(V,W)=0,\\[8pt]
\omega_{\pm}^{(n)}(A,V)=\pm
g_p(\pi_{\ast}A,J_{\pm}^{\sigma}(V(\xi_{\sigma}))(\xi_{\sigma})_{\sigma}^h.
\end{array}
$$
\end{lemma}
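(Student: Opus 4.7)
The plan is to compute the Levi form at a fixed $\sigma\in{\Bbb T}$ by projecting the brackets of appropriate section extensions onto $T{\Bbb T}/{\cal D}$. A preliminary step: since $h_t(X^h_\sigma+V',\chi_\sigma)=g(X,\xi_\sigma)$ vanishes whenever $X\in H^\sigma$ and $V'\in{\cal V}_\sigma$, the distribution ${\cal D}$ is $h_t$-orthogonal to $\chi$; as $|\chi_\sigma|_{h_t}=|\xi_\sigma|_g=1$ on ${\Bbb T}$, we obtain a canonical identification $T_\sigma{\Bbb T}/{\cal D}_\sigma\cong{\Bbb R}\chi_\sigma$ under which $\omega(A,B)=h_t([\widetilde A,\widetilde B],\chi)_\sigma\,\chi_\sigma$ for any section extensions $\widetilde A,\widetilde B$ of ${\cal D}$. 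Note that $\omega$ depends only on ${\cal D}$, so all four almost $CR$-structures share the same Levi form, consistent with the fact that the stated formula gives the same value for both signs $\pm$. I will work in the usual setup of Subsection 4.1: an adapted frame $(E_1,\dots,E_5)$ near $p=\pi(\sigma)$ with $\nabla E_\alpha|_p=0$ and $\sigma=(\kappa_3)_p$, so that $\xi_\sigma=-(E_1)_p$ and $H^\sigma=\mathrm{span}\{E_2,E_3,E_4,E_5\}$.

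The vertical--vertical case is immediate: fibres of ${\Bbb T}\to M$ are submanifolds, so $[\widetilde V,\widetilde W]_\sigma$ is vertical and lies in ${\cal D}_\sigma$. For the horizontal--horizontal case, I take $X,Y$ on $M$ with $\nabla X|_p=\nabla Y|_p=0$ and $X_p=\pi_\ast A$, $Y_p=\pi_\ast B$, and form the section extensions $\widetilde A=X^h-g(X\circ\pi,\xi)\chi$, $\widetilde B=Y^h-g(Y\circ\pi,\xi)\chi$ (these are sections of ${\cal D}$ and agree with $A,B$ at $\sigma$). Expanding $[\widetilde A,\widetilde B]$ by the Leibniz rule, every correction term carries a factor of either $g(X_p,\xi_\sigma)=g(Y_p,\xi_\sigma)=0$ or a horizontal derivative such as $X^h_\sigma(g(Y\circ\pi,\xi))$; the latter vanishes because along a horizontal curve $\widetilde\gamma$ the section $\tau\mapsto\tau$ is $\nabla$-parallel, hence so is $\xi_{\widetilde\gamma}$ (as $\nabla$ preserves $\wedge$ and $\ast$), combined with $\nabla X|_p=\nabla Y|_p=0$. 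Thus $[\widetilde A,\widetilde B]_\sigma=[X^h,Y^h]_\sigma$, and Lemma~\ref{phi-hor-brac} with $k=l=0$ gives $-(T(X,Y))^h_\sigma+R(X,Y)\sigma$. The curvature term is vertical, and $h_t$-pairing of the first term with $\chi_\sigma$ produces $-g(T(X_p,Y_p),\xi_\sigma)$, the stated formula.

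The mixed case $A=X^h_\sigma$, $V\in{\cal V}_\sigma$ is the main technical point. Extending $V$ through a section $S$ of $\Lambda^2_3TM$ with $\nabla S|_p=0$ and $S_p=V$, the same expansion yields
$$
[\widetilde A,\widetilde V]_\sigma=[X^h,\widetilde V]_\sigma+\widetilde V_\sigma\bigl(g(X\circ\pi,\xi)\bigr)\chi_\sigma.
$$
The first summand equals $\widetilde{(\nabla_X S)}_\sigma=0$ by the first identity of Lemma~\ref{hor-ver}. As $\widetilde V_\sigma$ is vertical, $\pi(\tau)$ is constant along its integral curve, whence $\widetilde V_\sigma(g(X\circ\pi,\xi))=g(X_p,\widetilde V_\sigma(\xi))$; and differentiating $\xi_\tau=\tfrac14\ast(\tau\wedge\tau)$ along $\tau(s)=\sigma+sV$ (legitimate since $V\perp\sigma$) gives $\widetilde V_\sigma(\xi)=\tfrac12\ast(V\wedge\sigma)$. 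The remaining step, and the genuine obstacle, is the pointwise identity
$$
\tfrac12\ast(V\wedge\sigma)=\pm J_\pm^\sigma(V(\xi_\sigma))\qquad(V\in{\cal V}_\sigma,\ \sigma\in{\Bbb T}).
$$
I plan to verify this by direct calculation in the adapted basis with $\sigma=(\kappa_3)_p$: both sides lie in $H^\sigma$ (the right side because $V(\xi_\sigma)\perp\xi_\sigma$ by skew-symmetry of $V$, and $J_\pm^\sigma$ preserves $H^\sigma$); writing $V=v_1(\kappa_1)_p+v_2(\kappa_2)_p$ and using $J_\pm^\sigma E_2=E_4$, $J_\pm^\sigma E_3=\pm E_5$, one computes both sides on this basis and checks they agree. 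By Lemma~\ref{ort-basis} every $\sigma\in{\Bbb T}$ arises from some adapted basis, so this suffices. The stated formula for $\omega(A,V)$ then follows immediately.
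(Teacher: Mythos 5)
Your proof is correct and follows essentially the same route as the paper: your extensions $X^h-g(X\circ\pi,\xi)\chi$ coincide with the paper's sections $-(\Phi_{\pm}^{(n)})^2X^h$ of ${\cal D}$, your $h_t$-orthogonal projection onto ${\Bbb R}\chi$ is exactly the paper's operator $(\Phi_{\pm}^{(n)})^2+\mathrm{Id}$, and the horizontal--horizontal and vertical--vertical cases are handled identically via Lemma~\ref{phi-hor-brac}. The only divergence is the mixed case, where you replace the paper's appeal to Lemma~\ref{hor-ver} by the direct computation $\widetilde V_{\sigma}(\xi)=\tfrac12\ast(V\wedge\sigma)$ plus the pointwise identity $\tfrac12\ast(V\wedge\sigma)=\pm J_{\pm}^{\sigma}(V(\xi_{\sigma}))$; that identity does hold (for $\sigma=\kappa_3$ and $V=v_1\kappa_1+v_2\kappa_2$ both sides equal $\sqrt{3}(v_1a_3-v_2a_5)$), so your deferred verification goes through.
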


\begin{proof} Take vector fields $X$ and $Y$ near the point
$p=\pi(\sigma)$ such that $X_p=\pi_{*}A$, $Y_p=\pi_{*}B$ and $\nabla X|_p=\nabla Y|_p=0$. Then
$-(\Phi_{\pm}^{(n)})^2X^{h}$ and $-(\Phi_{\pm}^{(n)})^2Y^{h}$ are sections of ${\cal D}$ with
values $A$ and $B$ at the point $\sigma$, so
$$
\omega_{\pm}^{(n)}(A,B)=((\Phi_{\pm}^{(n)})^2+Id)[(\Phi_{\pm}^{(n)})^2X^{h},
(\Phi_{\pm}^{(n)})^2Y^h]_{\sigma}.
$$
The tangent vectors $X_p,Y_p$  are orthogonal to $\xi_{\sigma}$ and Lemma~\ref{phi-hor-brac} implies
$$
\begin{array}{c}
\omega_{\pm}^{(n)}(A,B)=-((\varphi_{\pm}^{\sigma})^2(T_p(X,Y)))_{\sigma}^h-(T_p(X,Y))_{\sigma}^h=
-g_p(T(X,Y),\xi_{\sigma})(\xi_{\sigma})_{\sigma}^h.
\end{array}
$$

\smallskip

Extend $V$ and $W$ to vertical vector fields of ${\Bbb T}$ on a neighbourhood of ${\sigma}$. These
vector fields are section of ${\cal D}$, whose Lie bracket is a vertical vector field, hence a
section of  ${\cal D}$. Therefore
$$
\omega_{\pm}^{(n)}(V,W)=0.
$$

    Finally, take a section $S$ of $\Lambda^2_3TM$ near the point $p$ such that $S_p=V$ and $\nabla S|_p=0$.
Then, by Lemma~\ref{hor-ver}, we have
$$
\begin{array}{l}
\omega_{\pm}^{(n)}(A,V)=((\Phi_{\pm}^{(n)})^2+Id)([-(\Phi_{\pm}^{(n)})^2X^h,\widetilde S]_{\sigma})=\\[8pt]
\displaystyle{\frac{1}{2\pm 1}}\{\pm 3g_p(\varphi_{\pm}^{\sigma}(V(\xi_{\sigma})),X)(\xi_{\sigma})_{\sigma}^h\\[8pt]
\hskip 3.1cm +[1\pm (-1)][(\varphi_{\pm}^{\sigma})^2 (V\circ \varphi_{\pm}^{\sigma}(X))+V\circ\varphi_{\pm}^{\sigma}(X)]_{\sigma}^h\}=\\[8pt]
\displaystyle{\frac{1}{2\pm 1}}\{\pm 3g_p(\varphi_{\pm}^{\sigma}\circ V(\xi_{\sigma}),X) + [1\pm
(-1)]g_p(V\circ\varphi_{\pm}^{\sigma}(X),\xi_{\sigma})\}(\xi_{\sigma})_{\sigma}^h=\\[8pt]
\displaystyle{\frac{1}{2\pm 1}}\{\pm 3 + [1\pm (-1)]\}g_p(\varphi_{\pm}^{\sigma}\circ
V(\xi_{\sigma}),X)(\xi_{\sigma})_{\sigma}^h.
\end{array}
$$
Note that $V(\xi_{\sigma})$ is orthogonal to $\xi_{\sigma}$ since the map $V$ is skew-symmetric.
Hence $\varphi_{\pm}^{\sigma}(V(\xi_{\sigma}))=J_{\pm}^{\sigma}(V(\xi_{\sigma}))$. This implies the
third formula of the lemma.

\end{proof}

\begin{cor}\label{Levi-pm-n}

(i)~ The Levi form $\omega_{+}^{(1)}$, respectively  $\omega_{-}^{(2)}$, is ${\cal
J}_{+}^{(1)}$-invariant, respectively ${\cal J}_{-}^{(2)}$-invariant, if and only if
$$
g(T(J_{\pm}^{\sigma}X,J_{\pm}^{\sigma}Y),\xi_{\sigma})=g(T(X,Y),\xi_{\sigma})
$$
for every $\sigma\in{\Bbb T}$ and $X,Y\in T_{\pi(\sigma)}M$, $X,Y\perp\xi_{\sigma}$.

\smallskip

(ii)~ The Levi form $\omega_{-}^{(1)}$ is not $ {\cal J}_{-}^{(1)}$-invariant and  $\omega_{+}^{(2)}$ is not ${\cal J}_{+}^{(2)}$-invariant.
\end{cor}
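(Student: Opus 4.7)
The plan is to examine $\cal J^{(n)}_{\pm}$-invariance of $\omega^{(n)}_{\pm}$ separately on the three diagonal/off-diagonal pieces of $\cal D\times\cal D$ (horizontal--horizontal, vertical--vertical, mixed), using the three formulas of Lemma~\ref{Levi}. Since each summand of the splitting $\cal D_{\sigma}=\cal V_{\sigma}\oplus(H^{\sigma})^h_{\sigma}$ is preserved by $\cal J^{(n)}_{\pm}$, the three cases can be treated independently.

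The horizontal--horizontal case is immediate: substituting $\pi_{\ast}(\cal J^{(n)}_{\pm}X^h_{\sigma})=J^{\sigma}_{\pm}X$ into the first formula of Lemma~\ref{Levi} yields the torsion identity displayed in (i), with no dependence on $n$ or on the sign. The vertical--vertical case is trivial: the Levi form vanishes there by the second formula.

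The mixed case is the crux, and the only real calculation. The key step is to evaluate $J^{\sigma}_{\pm}\bigl((\cal J^{(n)}_{\pm}V)(\xi_{\sigma})\bigr)$, which I would obtain by combining the commutation identity (\ref{comm}), namely $(\cal J^{(n)}_{\pm}V)(\xi_{\sigma})=\pm(-1)^{n+1}J^{\sigma}_{\pm}(V(\xi_{\sigma}))$, with the observation that $V$ is $g$-skew-symmetric, so $V(\xi_{\sigma})\in H^{\sigma}$ and $(J^{\sigma}_{\pm})^2V(\xi_{\sigma})=-V(\xi_{\sigma})$. This gives $J^{\sigma}_{\pm}\bigl((\cal J^{(n)}_{\pm}V)(\xi_{\sigma})\bigr)=\pm(-1)^n V(\xi_{\sigma})$. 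Plugging into the third formula of Lemma~\ref{Levi} and using the $g$-skew-symmetry of $J^{\sigma}_{\pm}$, invariance of the mixed term reduces to the numerical identity $(-1)^n=\mp 1$. This holds precisely for $(n,\pm)\in\{(1,+),(2,-)\}$, completing (i); in the two remaining cases $(n,\pm)\in\{(1,-),(2,+)\}$ the two expressions differ by a nontrivial sign, and (ii) will follow once a single non-vanishing value of $g(J^{\sigma}_{\pm}X,V(\xi_{\sigma}))$ is exhibited.

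For that witness I would choose an adapted basis with $\sigma=\kappa_3$ and set $X=a_3$, $V=(\kappa_1)_{\pi(\sigma)}$. A short direct computation from (\ref{kappa}) gives $V(\xi_{\sigma})=-\sqrt{3}\,a_5$ and $J^{\sigma}_{\pm}a_3=\pm a_5$, hence $g(J^{\sigma}_{\pm}X,V(\xi_{\sigma}))=\mp\sqrt{3}\neq 0$, producing the required sign discrepancy. I do not anticipate any serious obstacle; the entire argument rides on the sign bookkeeping in the mixed case, which is fully controlled by (\ref{comm}) and by $(J^{\sigma}_{\pm})^2=-I$ on $H^{\sigma}$.
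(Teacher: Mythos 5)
Your proposal is correct and follows essentially the same route as the paper: both reduce the question to the sign relation $\omega_{\pm}^{(n)}({\cal J}_{\pm}^{(n)}A,{\cal J}_{\pm}^{(n)}V)=\pm(-1)^{n+1}\omega_{\pm}^{(n)}(A,V)$ obtained from Lemma~\ref{Levi} together with identity (\ref{comm}), which settles the mixed case and leaves only the horizontal--horizontal torsion identity for (i), and both establish (ii) by exhibiting a nonzero mixed value with $\sigma=\kappa_3$, $X=a_3$, $V=\kappa_1$ (the paper records it as $\omega_{+}^{(2)}((E_3)^h_{\sigma},(\kappa_1)_p)=-\sqrt 3$). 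The sign bookkeeping in your mixed-case computation, including $J^{\sigma}_{\pm}\bigl(({\cal J}^{(n)}_{\pm}V)(\xi_{\sigma})\bigr)=\pm(-1)^nV(\xi_{\sigma})$ and the resulting criterion $(-1)^n=\mp 1$, checks out.
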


\begin{proof}
By  Lemma~\ref{Levi} and identity (\ref{comm}), if $A\in{\cal D}_{\sigma}$ is a horizontal vector and $V\in{\cal
V}_{\sigma}$,
$$
\omega_{\pm}^{(n)}({\cal J}_{\pm}^{(n)}A, {\cal J}_{\pm}^{(n)}V)=\pm(-1)^{n+1}\omega_{\pm}^{(n)}(A,V).
$$
This and Lemma~\ref{Levi} imply $(i)$. Also,  $\omega_{+}^{(2)}$ is ${\cal J}_{+}^{(2)}$-invariant
if and only if $\omega_{\pm}^{(2)}(A,V)=0$ for every $A$ and $V$. By Lemma~\ref{Levi}, we have
$\omega_{+}^{(2)}((E_3)_{\sigma}^h,(\kappa_1)_p)$ = $-\sqrt{3}$. Thus $\omega_{+}^{(2)}$ is not
${\cal J}_{+}^{(2)}$-invariant. Similarly, $\omega_{-}^{(1)}$ is not ${\cal J}_{-}^{(1)}$-invariant
.

\end{proof}

\begin{cor} {\rm (\cite{BN}})~ The almost $CR$-structures $({\cal D},{\cal J}_{-}^{(1)})$, $({\cal D},{\cal J}_{\pm}^{(2)})$ are not integrable.
\end{cor}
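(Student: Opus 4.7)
The plan is to dispatch the three CR-structures by combining Corollary~\ref{Levi-pm-n} with a direct Nijenhuis computation for the case not covered by that corollary. Recall that an almost CR-structure is integrable precisely when its Levi form is $\mathcal{J}$-invariant \emph{and} its Nijenhuis tensor (which is defined once Levi $\mathcal{J}$-invariance holds) vanishes.

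For $({\cal D},{\cal J}_{-}^{(1)})$ and $({\cal D},{\cal J}_{+}^{(2)})$, Corollary~\ref{Levi-pm-n}(ii) already states that the respective Levi forms fail to be $\mathcal{J}$-invariant, so these two CR-structures are non-integrable with no further work.

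The remaining case $({\cal D},{\cal J}_{-}^{(2)})$ requires a split argument. If the torsion identity of Corollary~\ref{Levi-pm-n}(i), namely $g(T(J_{-}^{\sigma}X,J_{-}^{\sigma}Y),\xi_{\sigma})=g(T(X,Y),\xi_{\sigma})$, fails at some $\sigma\in{\Bbb T}$, then $\omega_{-}^{(2)}$ is not ${\cal J}_{-}^{(2)}$-invariant there and non-integrability follows as before. If this identity holds everywhere, then $\omega_{-}^{(2)}$ is $\mathcal{J}$-invariant and the CR-Nijenhuis $N^{CR}$ becomes a well-defined ${\cal D}$-valued tensor on ${\cal D}\times{\cal D}$. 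A brief check using $\Phi_{-}^{(2)}\chi=0$ and $\eta_{t}|_{\cal D}=0$ shows that for $A,B\in{\cal D}$ the CR-Nijenhuis equals the ${\cal D}$-component of the almost-contact Nijenhuis, $N^{CR}(A,B)=\pi_{\cal D}\,N_{-}^{(2)}(A,B)$. One then invokes the computation already performed in the proof of the proposition asserting non-normality of $(\Phi_{-}^{(2)},\chi,h_{t})$: at $\sigma=(\kappa_{3})_{p}$ it gives $N_{-}^{(2)}(E_{3}^{h},\kappa_{1})_{\sigma}=2(\sqrt{3}E_{1}+E_{4})_{\sigma}^{h}$. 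Since $\xi_{\sigma}=-E_{1}$, the vector $(E_{1})_{\sigma}^{h}$ equals $-\chi_{\sigma}$, which is orthogonal to ${\cal D}_{\sigma}$, while $(E_{4})_{\sigma}^{h}\in (H^{\sigma})_{\sigma}^{h}\subset{\cal D}_{\sigma}$. Hence the ${\cal D}$-projection is $2(E_{4})_{\sigma}^{h}\neq 0$, so $N^{CR}\neq 0$ and $({\cal D},{\cal J}_{-}^{(2)})$ is not integrable.

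The main obstacle is the $({\cal D},{\cal J}_{-}^{(2)})$ case, where non-integrability is not forced at the level of the Levi form: one must both identify the CR-Nijenhuis with the appropriate projection of $N_{-}^{(2)}$ and exhibit a concrete point where this projection does not vanish.
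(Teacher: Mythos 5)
Your proof is correct, and for the structure $({\cal D},{\cal J}^{(2)}_{-})$ it takes a genuinely different route from the paper. The first two structures are dispatched exactly as the paper does, via Corollary~\ref{Levi-pm-n}(ii). For $({\cal D},{\cal J}^{(2)}_{-})$, however, the paper does not pass through the almost contact tensor at all: it computes the $CR$-Nijenhuis tensor directly on the mixed pair $((E_2)^h_{\sigma},(\kappa_1)_p)$ by means of Lemma~\ref{hor-ver}, obtaining $-4(E_5)^h_{\sigma}\neq 0$. You instead recycle the value $N^{(2)}_{-}(E_3^h,\kappa_1)_{\sigma}=2(\sqrt3\,E_1+E_4)^h_{\sigma}$ already recorded in the non-normality proposition, after establishing the bridge $N^{CR}(A,B)=\pi_{\cal D}N^{(2)}_{-}(A,B)$. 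That bridge is valid: when the Levi form is ${\cal J}$-invariant the two tensors actually coincide on ${\cal D}\times{\cal D}$, because the $\chi$-contributions of $\varphi^{2}[A,B]$ and $d\eta(A,B)\chi$ cancel and $-\omega(A,B)+\omega({\cal J}A,{\cal J}B)=0$. Your case split on Levi-invariance is also slightly more scrupulous than the paper's proof, which writes down the $CR$-Nijenhuis tensor without first securing that it is defined (though for the $(2,-)$ structure the relevant bracket combination on a mixed pair lies in ${\cal D}$ automatically, by the sign identity in the proof of Corollary~\ref{Levi-pm-n}, so the paper's computation is legitimate). One caveat worth recording: precisely because the Levi form of $({\cal D},{\cal J}^{(2)}_{-})$ is automatically invariant on (horizontal, vertical) pairs, $N^{CR}(A,V)$ must already lie in ${\cal D}_{\sigma}$, so the nonzero $\chi$-component of the quoted value $2(\sqrt3\,E_1+E_4)^h_{\sigma}$ points to a sign or coefficient slip in Proposition~\ref{NXV} (a direct bracket computation for this pair yields $-4(E_4)^h_{\sigma}$). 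Since every candidate value has a nonvanishing $E_4$-component, your projection argument and your conclusion are unaffected, but you are inheriting a constant that the paper itself does not reproduce consistently.
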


\begin{proof}
According to Corollary~\ref{Levi-pm-n}, the almost $CR$-structures  $({\cal D},{\cal J}_{-}^{(1)})$ and
$({\cal D},{\cal J}_{+}^{(2)})$ are not integrable. Denote by $N_{-}^{(2)}$ be the Nijenhuis tensor
of the $CR$-structure $({\cal D},{\cal J}_{-}^{(2)})$. Then, using Lemma~\ref{hor-ver}, one easily computes that
$N_{-}^{(2)}((E_2)_{\sigma}^h,(\kappa_1)_p)=-4(E_5)_{\sigma}^h\neq 0$, so $({\cal D},{\cal J}_{-}^{(2)})$
is not integrable.
\end{proof}

\smallskip

\begin{prop} \label{J-Levi} If $\nabla$ is the characteristic connection,  the Levi form
$\omega^{(1)}_{+}$ is ${\cal J}^{(1)}_{+}$-invariant if and only if $\ast T\in\Lambda^2_3TM$.
\end{prop}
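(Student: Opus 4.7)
The plan is to reduce the claim to the torsion computation already carried out in the proof of Theorem~\ref{normal}. By Corollary~\ref{Levi-pm-n}(i) applied with the plus sign and $n=1$, the Levi form $\omega^{(1)}_+$ is ${\cal J}^{(1)}_+$-invariant if and only if
\[
g(T(J_{+}^{\sigma}X,J_{+}^{\sigma}Y),\xi_{\sigma})=g(T(X,Y),\xi_{\sigma})
\]
for every $\sigma\in{\Bbb T}$ and every $X,Y\in T_{\pi(\sigma)}M$ with $X,Y\perp\xi_{\sigma}$, i.e.\ $X,Y\in H^{\sigma}$. This is exactly condition (\ref{T}) encountered in the proof of Theorem~\ref{normal}, so the proposition reduces to the equivalence
\[
(\ref{T})\text{ holds for all }\sigma, X, Y \iff \ast T\in\Lambda^2_3TM .
\]

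For the forward direction I would fix a point $p\in M$, pick an adapted basis $(a_1,\dots,a_5)$ of $T_pM$, and introduce $\kappa_1,\kappa_2,\kappa_3$ via (\ref{kappa}). Testing (\ref{T}) at the four choices $\sigma=\kappa_1$, $\sigma=\kappa_2$, $\sigma=\kappa_3$, and $\sigma=\frac{1}{\sqrt{2}}(\kappa_1+\kappa_2)$ — using (\ref{sigma-pm}) to identify $\xi_\sigma$ and the action of $J^{\sigma}_+$ on $H^\sigma$, then inserting suitable basis vectors of $H^\sigma$ for $X,Y$ — yields precisely the linear relations (\ref{T1})--(\ref{T7}) on the components $T_{ijk}=g(T(a_i,a_j),a_k)$. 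Combining them as in the proof of Theorem~\ref{normal}, one sees that $\ast T$ vanishes on the explicit basis of $(\Lambda^2_3T_pM)^{\perp}$ listed there, so $\ast T\in\Lambda^2_3T_pM$.

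For the converse, given an arbitrary $\sigma\in{\Bbb T}$, by Lemma~\ref{ort-basis} I choose an adapted basis $(a_1,\dots,a_5)$ of $T_{\pi(\sigma)}M$ with $\sigma=\kappa_3$, so that $\xi_\sigma=-a_1$, $H^\sigma=\mathrm{span}\{a_2,a_4,a_3,a_5\}$ and $J^{\sigma}_+a_2=a_4$, $J^{\sigma}_+a_3=a_5$. Condition (\ref{T}) then reduces at this $\sigma$ to the two equalities $T_{145}=T_{123}$ and $T_{134}=T_{125}$, which are immediate from $\ast T\in\Lambda^2_3TM$ since $a_2\wedge a_3-a_4\wedge a_5$ and $a_2\wedge a_5-a_3\wedge a_4$ lie in $(\Lambda^2_3T_pM)^{\perp}$. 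Since this works at every $\sigma$, the implication is complete.

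No step presents a real obstacle — all of the bookkeeping has already been performed in the torsion half of the proof of Theorem~\ref{normal}; the only point to be careful about is invoking Lemma~\ref{ort-basis} so that the reduction $\sigma=\kappa_3$ (in a suitable adapted frame) is available at arbitrary $\sigma\in{\Bbb T}$. The characteristic-connection hypothesis enters only through the total skew-symmetry of $T$, which is what lets the tensor $\ast T$ be defined and the relations (\ref{T1})--(\ref{T7}) be read off cleanly.
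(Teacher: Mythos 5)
Your proposal is correct and follows essentially the same route as the paper: reduce via Corollary~\ref{Levi-pm-n}(i) to identity (\ref{T}), and then invoke (or, as you do, re-derive) the equivalence of (\ref{T}) with $\ast T\in\Lambda^2_3TM$ already established in the torsion half of the proof of Theorem~\ref{normal}. The paper simply cites that earlier computation rather than repeating the choices of $\sigma$ and the relations (\ref{T1})--(\ref{T7}), so your write-up is just a more explicit version of the same argument.
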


\begin{proof}
By Corollary~\ref{Levi-pm-n}, $\omega^{(1)}_{+}$ is ${\cal J}^{(1)}_{+}$-invariant if and only if
$$
g(T(J^{\sigma}_{+}X,J^{\sigma}_{+}Y),\xi_{\sigma})=g(T(X,Y),\xi_{\sigma})
$$
for $\sigma\in{\Bbb T}$, $X,Y\in T_{\pi(\sigma)}M$, $ X,Y\perp\xi_{\sigma}$.
\end{proof}
This, in fact, is identity (\ref{T}). As we have seen in the proof of Theorem~\ref{normal} it is equivalent to
$\ast T\in \Lambda^2_3 T_pM$.

\subsection{The Nijenhuis tensor of the almost $CR$-structure $({\cal D},{\cal J^{(1)}_{+}})$ on the twistor space}$\\$

Denote the Nijenhuis tensor of the $CR$-structure $({\cal D},{\cal J}^{(1)}_{+})$ by $N^{\it CR}$. Then we have the following.

\begin{lemma}\label{N-tens} Let $A,B\in {\cal D}_{\sigma}$ be
horizontal vectors and $V,W\in{\cal D}_{\sigma}$ vertical ones at a point $\sigma\in{\Bbb T}$.  Let
$X=\pi_{*}A, Y=\pi_{*}B$. Suppose that the Levi form $\omega^{(1)}_{+}$ is ${\cal J}$-invariant. Then:
$$
\begin{array}{l}
N^{\it CR}(A,B)= -R(X,Y)\sigma +R(J^{\sigma}_{+}X,J^{\sigma}_{+}Y)\sigma
-{\cal J}^{(1)}_{+}(R(J^{\sigma}_{+}X,Y)\sigma+R(X,J^{\sigma}_{+}Y)\sigma).\\[10pt]
 N^{\it CR}(A,V)=0, \quad N^{\it CR}(V,W)=0.
\end{array}
$$
\end{lemma}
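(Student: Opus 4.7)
The plan is to extend the given tangent vectors to sections of ${\cal D}$ adapted to Lemmas~\ref{hor-ver} and~\ref{phi-hor-brac}, plug them into the Nijenhuis formula recalled just above, and collect terms. Set $p = \pi(\sigma)$. I would pick vector fields $X, Y$ on $M$ with $X_p = \pi_{\ast}A$, $Y_p = \pi_{\ast}B$ and $\nabla X|_p = \nabla Y|_p = 0$; since $A, B$ are horizontal in ${\cal D}_\sigma$, necessarily $X_p, Y_p \in H^\sigma$. For the vertical inputs, take sections $S, S'$ of $\Lambda^2_3 TM$ near $p$ with $S_p = V$, $S'_p = W$ and $\nabla S|_p = \nabla S'|_p = 0$, so that the extensions $\widetilde S, \widetilde{S'}$ lie in ${\cal D}$ and agree with $V, W$ at $\sigma$.

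For the horizontal-horizontal case, the natural extension of $A$ is ${\cal A} = -(\Phi^{(1)}_{+})^{2} X^{h}$: this is a genuine section of ${\cal D}$ near $\sigma$, because $-(\varphi^{\tau}_{+})^{2}$ lands in $H^{\tau}$ for every $\tau$, and it agrees with $A$ at $\sigma$. Using the partially complex identity $(\Phi^{(1)}_{+})^{3} = -\Phi^{(1)}_{+}$ together with $\Phi^{(1)}_{+}\chi = 0$, one finds ${\cal J}^{(1)}_{+}{\cal A} = \Phi^{(1)}_{+} X^{h}$, and similarly for ${\cal B}$. Each of the four brackets appearing in the Nijenhuis formula is then of the form $[(\Phi^{(1)}_{+})^{k} X^{h}, (\Phi^{(1)}_{+})^{l} Y^{h}]_{\sigma}$ with $k,l \in \{1,2\}$ and is evaluated directly by Lemma~\ref{phi-hor-brac}. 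Summing, the curvature terms assemble into exactly the right-hand side stated in the lemma, while the torsion terms assemble into
\[
\bigl[ T(X,Y) - T(J^{\sigma}_{+} X, J^{\sigma}_{+} Y) + \varphi^{\sigma}_{+} T(J^{\sigma}_{+} X, Y) + \varphi^{\sigma}_{+} T(X, J^{\sigma}_{+} Y) \bigr]^{h}_{\sigma}.
\]
The $\xi_{\sigma}$-component of the bracketed expression vanishes by the assumed ${\cal J}$-invariance of $\omega^{(1)}_{+}$ (Corollary~\ref{Levi-pm-n}), which is precisely what makes applying ${\cal J}^{(1)}_{+}$ to the cross-bracket sum legitimate in the first place; and the remaining $H^{\sigma}$-component vanishes because, by the analysis in the proof of Theorem~\ref{normal}, identity~(\ref{T}) is equivalent to~(\ref{TXY}).

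For the mixed case, I would keep ${\cal A} = -(\Phi^{(1)}_{+})^{2} X^{h}$ and use $\widetilde S$; the four brackets occurring in $N^{CR}(A, V)$ are then of the form $[(\Phi^{(1)}_{+})^{k} X^{h}, (\Phi^{(1)}_{+})^{l} \widetilde S]_{\sigma}$ with $(k,l) \in \{(2,0), (1,1), (1,0), (2,1)\}$, all handled by the last four identities of Lemma~\ref{hor-ver}. In each of these formulas the $\widetilde{(\nabla_{\cdot} S)}$ term vanishes because $\nabla S|_{p} = 0$, the coefficient $[1+(-1)]$ vanishes for $(n, \pm) = (1, +)$, and the coefficient $g(X_{p}, \xi_{\sigma})$ vanishes because $X_{p} \in H^{\sigma}$. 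What survive are purely $\xi^{h}_{\sigma}$-contributions which, after using identity~(\ref{comm}) to write $({\cal J} V)(\xi_{\sigma}) = J^{\sigma}_{+}(V(\xi_{\sigma}))$ and using $(J^{\sigma}_{+})^{2} = -\mathrm{Id}$ on $H^{\sigma}$, cancel in pairs. For the purely vertical case, every vertical vector field is automatically a section of ${\cal D}$ on which ${\cal J}^{(1)}_{+}$ coincides with the standard complex structure of the $2$-sphere fibre; since the Nijenhuis tensor of any almost complex structure on a real $2$-manifold vanishes identically, $N^{CR}(V, W) = 0$.

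The main obstacle I anticipate is the bookkeeping in the horizontal case: four separate applications of Lemma~\ref{phi-hor-brac}, each producing both a torsion and a curvature contribution, and the crucial step of recognising the resulting horizontal torsion expression as exactly the specialisation of~(\ref{TXY}) that is forced by the ${\cal J}$-invariance hypothesis on $\omega^{(1)}_{+}$.
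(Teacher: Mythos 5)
Your proposal is correct and follows essentially the same route as the paper: extend $A,B$ to the sections $-(\Phi^{(1)}_{+})^{2}X^{h}$, $-(\Phi^{(1)}_{+})^{2}Y^{h}$ and $V$ to $\widetilde S$ with $\nabla S|_{p}=0$, evaluate the four brackets in each case via Lemma~\ref{phi-hor-brac} and Lemma~\ref{hor-ver}, and use Corollary~\ref{Levi-pm-n} together with the equivalence of (\ref{T}) and (\ref{TXY}) established in the proof of Theorem~\ref{normal} to eliminate the torsion terms. You are in fact slightly more explicit than the paper's own proof about why the full horizontal torsion expression, and not merely its $\xi_{\sigma}$-component, vanishes under the Levi-form invariance hypothesis.
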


\begin{proof} Take vector fields $X$ and $Y$ near the point
$p=\pi(\sigma)$ such that $X_p=\pi_{*}A$, $Y_p=\pi_{*}B$ and $\nabla X|_p=\nabla Y|_p=0$. Then
$-(\Phi_{+}^{(1)})^2X^{h}$ and $-(\Phi_{+}^{(1)})^2Y^{h}$ are sections of ${\cal D}$ with values
$A$ and $B$ at the point $\sigma$, so $N^{\it CR}(A,B)=N^{\it
CR}_{\sigma}((\Phi_{+}^{(1)})^2X^{h},(\Phi_{+}^{(1)})^2Y^{h})$. Then Lemma~\ref{phi-hor-brac} and
Corollary~\ref{Levi-pm-n} imply the first identity of the lemma.

  Take a section $S$ of $\Lambda^2_3TM$ near the point $p$ such that $S_p=V$ and $DS|_p=0$.
The vertical vector field $\widetilde S$ determined by $S$  takes value $V$ at the point $\sigma$.
Then  Lemma ~\ref{hor-ver} and identity (\ref{comm}) give
$$
N^{\it CR}(A,V)=-N^{\it CR}((\Phi_{+}^{(1)})^2X^h,\widetilde S)_{\sigma}=0.
$$
The restriction of ${\cal J}^{(1)}_{+}$ to any vertical space is
the complex structure of the
corresponding fibre of ${\Bbb T}$, hence $N^{\it CR}(V,W)=0$.
\end{proof}

\begin{prop} \label{Nij}
The Nijenhuis tensor $N^{\it CR}$ vanishes if and only if the $\odot^2_9$-component of ${\cal R}$
vanishes.
\end{prop}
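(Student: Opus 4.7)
The plan is to reduce the problem to a piece of curvature analysis already carried out inside the proof of Theorem~\ref{normal}. First, by Lemma~\ref{N-tens}, the mixed terms $N^{\it CR}(A,V)$ and the vertical-vertical terms $N^{\it CR}(V,W)$ vanish identically, so $N^{\it CR}\equiv 0$ if and only if
$$
-R(X,Y)\sigma+R(J^{\sigma}_{+}X,J^{\sigma}_{+}Y)\sigma-{\cal J}^{(1)}_{+}\bigl(R(J^{\sigma}_{+}X,Y)\sigma+R(X,J^{\sigma}_{+}Y)\sigma\bigr)=0
$$
for every $\sigma\in{\Bbb T}$ and every $X,Y\in H^{\sigma}$, where both sides are vertical. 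This is precisely the vertical (curvature) part of the identity that appeared when analyzing normality in the proof of Theorem~\ref{normal}; the difference here is that $X,Y$ are already constrained to lie in $H^{\sigma}$ (no analog of the torsion identity (\ref{TXY}) nor of the $Y=\xi_{\sigma}$ case shows up).

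Next, I would pair the above vertical vector with an arbitrary $\tau\in{\cal V}_{\sigma}$. Using identity (\ref{F}), the compatibility relation ${\cal J}^{(1)}_{+}V=-\sigma\times V$ for $V\in{\cal V}_{\sigma}$, and the identity $\sigma\times(\sigma\times\tau)=-|\sigma|^{2}\tau$ valid for $\tau\perp\sigma$, the condition transforms into the curvature identity (\ref{RXY}) restricted to $X,Y\in H^{\sigma}$ (an overall scaling of the second summand produced by $|\sigma|^{2}=5$ is irrelevant for the vanishing analysis, as both terms are linear in ${\cal R}$).

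At this point I would invoke directly the analysis already carried out in the proof of Theorem~\ref{normal} for exactly this restricted identity. Decomposing ${\cal R}=\frac{5s_R}{6}{\cal P}+{\cal A}+{\cal B}^{-}+{\cal B}^{+}$ as in (\ref{R-decom}), the contributions of ${\cal P}$, of ${\cal A}$ (from Lemma~\ref{nu}), and of ${\cal B}^{-}$ (from Lemma~\ref{skew} together with the symmetry properties of ${\cal E}_{\eta}$ in (\ref{E})) all vanish automatically whenever $X,Y\in H^{\sigma}$ and $\tau\perp\sigma$. Hence the identity collapses to ${\cal E}_{\eta}(X,Y,\sigma,\tau)=0$ with $\eta=\rho_{R}^{+}-\frac{1}{5}s_{R}g$, and the basis-chasing computation in the proof of Theorem~\ref{normal} (applied to the adapted bases $\kappa_{1}(a),\kappa_{2}(a),\kappa_{3}(a)$, and to the bases obtained from (\ref{a'}) and (\ref{c}) by varying the angles $\psi,\theta,\varphi$) shows this to be equivalent to $\eta$ annihilating the explicit basis of $\odot^{2}_{9}T_{p}M$ exhibited there, i.e., to the vanishing of the $\odot^{2}_{9}$-component of ${\cal R}$.

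The main conceptual point, and the only place where any real work happens, is recognizing that everything reduces to the $H^{\sigma}$-restricted half of (\ref{RXY}); once that is done, the present proposition is essentially a corollary of the curvature-decomposition analysis already performed for normality, and the torsion identities and the $Y=\xi_{\sigma}$ analysis (which produced the extra conditions $\ast T\in\Lambda^{2}_{3}TM$, $Q=0$, and vanishing of the $\Lambda^{2}_{7}$-component in Theorem~\ref{normal}) simply do not enter.
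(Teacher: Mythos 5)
Your proposal is correct and follows essentially the same route as the paper: the paper's own proof likewise invokes Lemma~\ref{N-tens} together with Proposition~\ref{NXY} to reduce $N^{\it CR}=0$ to the vanishing of the vertical part ${\cal V}N^{(1)}_{+}(X^h_{\sigma},Y^h_{\sigma})$ for $X,Y\in H^{\sigma}$, i.e.\ to identity (\ref{RXY}) restricted to $H^{\sigma}$, and then cites the curvature-decomposition analysis from the proof of Theorem~\ref{normal} showing this is equivalent to the vanishing of the $\odot^2_9$-component of ${\cal R}$. Your write-up merely makes explicit the intermediate steps (the use of (\ref{F}), the vanishing of the ${\cal P}$, ${\cal A}$, ${\cal B}^{-}$ contributions, and the reduction to ${\cal E}_{\eta}=0$) that the paper leaves implicit.
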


\begin{proof}
By Lemma~\ref{N-tens} and Proposition~\ref{NXY}, the condition $N^{\it CR}=0$ is the same as ${\cal V}N^{(1)}_{+}(X^h_{\sigma},Y^h_{\sigma})=0$ for every
$\sigma\in{\Bbb T}$ and $X,Y\in H^{\sigma}$. We have seen in the proof of Theorem~\ref{normal} that the latter condition is equivalent to vanishing of the
$\odot^2_9$-component of ${\cal R}$.
\end{proof}

\subsection{Integrability of the almost $CR$-structure $({\cal D},{\cal J}^{(1)}_{+})$ on the twistor space}$\\$

 Propositions~\ref{J-Levi} and \ref{Nij} imply the following.
\begin{theorem} \rm{(\cite{BN})}\label{integr}
Suppose that the $SO(3)$-structure on $M$ is nearly integrable. Then the almost $CR$ structure $({\cal
D},{\cal J}^{(1)}_{+})$ on the twistor space ${\Bbb T}$ defined by means of the characteristic connection is
integrable if and only if $\ast T\in\Lambda^2_3TM$ and the $\odot^2_9$-component of the curvature
vanishes.
\end{theorem}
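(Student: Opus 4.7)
The plan is to derive Theorem~\ref{integr} as an immediate consequence of the two propositions established in the preceding subsections, namely Proposition~\ref{J-Levi} and Proposition~\ref{Nij}. By definition, the almost $CR$-structure $({\cal D},{\cal J}^{(1)}_{+})$ is integrable precisely when two conditions hold simultaneously: its Levi form is ${\cal J}^{(1)}_{+}$-invariant, and its Nijenhuis tensor $N^{\it CR}$ vanishes. So I would simply unpack the definition of integrability into these two conditions and apply the relevant propositions to each.

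First I would invoke Proposition~\ref{J-Levi}, which has already translated the ${\cal J}^{(1)}_{+}$-invariance of the Levi form $\omega^{(1)}_{+}$ into the purely tensorial condition $\ast T\in\Lambda^2_3 TM$ on the base manifold. This handles the first half of the integrability requirement.

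Next I would invoke Proposition~\ref{Nij}, which characterizes the vanishing of the Nijenhuis tensor $N^{\it CR}$ in terms of vanishing of the $\odot^2_9$-component of the curvature operator ${\cal R}$ of the characteristic connection. Combining both conditions yields the stated equivalence. No further computation is required because all the computational work has been done inside the two propositions (and the earlier lemmas, in particular Lemma~\ref{Levi}, Lemma~\ref{N-tens}, and the curvature decomposition of Proposition~\ref{irred}).

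The only conceptual point worth flagging is that the vanishing of the Nijenhuis tensor makes genuine sense only once the Levi form is known to be ${\cal J}^{(1)}_{+}$-invariant; however, Lemma~\ref{N-tens} was proved under precisely that hypothesis, so the logical dependence is consistent. Since there is no genuine obstacle beyond citing the two propositions in the correct order, the proof reduces to a one-line synthesis; the real mathematical content lives in the supporting lemmas already proved above.
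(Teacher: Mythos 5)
Your proposal matches the paper's own argument exactly: the theorem is stated there as an immediate consequence of Propositions~\ref{J-Levi} and \ref{Nij}, which handle the ${\cal J}^{(1)}_{+}$-invariance of the Levi form and the vanishing of $N^{\it CR}$ respectively. Your remark that Lemma~\ref{N-tens} already presupposes the Levi-form invariance, so the two conditions combine consistently, is the only point of care and you have it right.
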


\smallskip

\noindent {\it Remark}. It is a result of Ianus \cite{I} (see
\cite[Theorem 6.57]{Blair}) that any normal almost contact metric
structure induces an integrable almost $CR$-structure. It is
well-known that the converse is not true.  The almost contact metric
structure $(\Phi^{(1)}_{+},\chi,h_t)$ in Example 2 above also gives
a counterexample. As we have seen as an application of
Theorem~\ref{normal}, it is not normal, although the induced $CR$
structure $({\cal D},{\cal J}^{(1)}_{+})$ is integrable by
Theorem~\ref{integr}.

 \vspace{0.3cm}

\end{document}